\documentclass[11pt]{article}

\usepackage{tikz}
\usepackage{subfigure}
\usepackage[english]{babel}
\usepackage{graphicx}
\usepackage{amsthm}
\usepackage{appendix}
\usepackage{booktabs}
\usepackage{enumitem}
\usepackage{indentfirst}
\usepackage{enumitem}
	\usepackage{titlesec}
\usepackage[center]{caption2}
\usepackage{amsfonts,amssymb,amsmath,latexsym,amsthm}
\usepackage{multirow}

\usepackage{geometry}
\titleformat{\paragraph}
  {\normalfont\normalsize\bfseries}{\theparagraph}{1em}{}
\titlespacing*{\paragraph}{0pt}{3.25ex plus 1ex minus .2ex}{1.5ex plus .2ex}

\def\sat{\mbox{sat}}

\newtheorem{lem}{Lemma}[section]
\newtheorem{myDef}{Definition}
\newtheorem{thm}{Theorem}[section]

\newtheorem{prop}[lem]{Proposition}
\newtheorem{conj}{Conjecture}[section]

\newtheorem{claim}{Claim}

\newtheorem{remark}{Remark}

\makeatletter
\renewenvironment{proof}[1][\proofname]{\par
	\pushQED{\qed}%
	\normalfont \topsep6\p@\@plus6\p@\relax
	\trivlist
	\item[\hskip\labelsep
	\bfseries #1\@addpunct{.}]\ignorespaces
}{%
	\popQED\endtrivlist\@endpefalse
}
\makeatother

\newenvironment{mycase}[1]
{\par\vspace{0.5\baselineskip}\noindent\textbf{Case #1.}\hspace{0.5em}\ignorespaces}
{\par}

\begin{document}
	
	\title{Minimizing the number of edges in $\mathcal{C}_{[4,6]}$-saturated graphs}
	\author{Qi Liu,  Dijian Wang\thanks {Corresponding author: dijianwang0121@163.com},  Shicai Gong \\
	\small  School of Science, Zhejiang University of Science and Technology, \\
	\small Hangzhou, Zhejiang, 310023, P. R. China\\}\date{}
	
	\baselineskip=0.30in
	
	\date{}
	
	\maketitle
	
	\begin{abstract}
		Let $\mathcal{C}_{[4,r]}$  be the family of cycles $\{C_4, \dots, C_r\}$. 	
		A graph $G$ is said to be $\mathcal{C}_{[4,r]}$-saturated if $G$ does not contain a copy of cycle $C_i$ for $4\le i\le r$, but the addition of any edge $e\notin E(G)$ creates at least one copy of $C_i$ for $4\le i\le r.$ The saturation number $\sat(n, \mathcal{C}_{[4,r]})$ is the minimum number of edges in an $n$-vertex  $\mathcal{C}_{[4,r]}$-saturated graph.
		In 2025,  Ma  determined that $\sat(n, \mathcal{C}_{[4,5]})=\lceil \frac{5n}{4} - \frac{3}{2} \rceil$, and conjectured that for any $r \ge 5$, $\sat(n, \mathcal{C}_{[4,r]}) = \lceil\frac{5n}{4} - \frac{3}{2} \rceil$ holds for large $n$. In this paper we  prove that $\sat(n, \mathcal{C}_{[4,r]}) \le \lceil\frac{5n}{4} - \frac{r+1}{4}\rceil$ for  $n \ge r+1$, which disproves 
		Ma's conjecture for $r\ge 6.$ For $r=6,$ we  determine that $\sat(n, \mathcal{C}_{[4,6]})=\lceil\frac{5n}{4}-\frac{7}{4}\rceil.$
		\\
		\noindent
		\textbf{AMS classification}: 05C35\\
		{\bf Keywords}: Saturation graphs; Saturation number; Cycles; Edge minimization
	\end{abstract}
	
	\section{Introduction}
		Let $G = (V(G),E(G))$ be an $n$-vertex graph with order $|V(G)|$ and size $|E(G)|$.  For any $v \in V$, we use $N_G(v)$ to denote the set of the neighbors of $v$ in $G$ and let $d_G(v) = |N_G(v)|$ be the degree of $v$ in $G$. The minimum degree of $G$ is
		denoted by $\delta(G)$. For a set $U \subseteq V(G)$, let $G[U]$ be the subgraph induced by $U.$ Write $G \setminus U = G[V(G) \setminus U]$ and $N_G(U)=\bigcup_{v\in U}N_G(v)$. For any two disjoint sets $U_1, U_2 \subseteq V(G)$, let $E(U_1,U_2) = \{u_1u_2 \in E(G): u_1 \in U_1 \text{ and } u_2 \in U_2\}$. Moreover, let $e(G)=|E(G)|$ and $e(U_1,U_2)=|E(U_1,U_2)|.$
		
		Given a family $\mathcal{F}$ of graphs, a graph $G$ is said to be $\mathcal{F}$-saturated if $G$ does not contain a subgraph isomorphic to any member $F \in \mathcal{F}$ but $G+e$ contains at least one copy of some $F \in \mathcal{F}$ for any edge $e \not\in E(G)$. The minimum number of edges in an $n$-vertex $\mathcal{F}$-saturated graph, which is called the saturation number, denoted by $\sat(n, \mathcal{F})$.
		In 1961, Erd\H{o}s, Hajnal and Moon~\cite{EHM64} first studied on the saturation number of $K_{r+1}$, they
		determined $\sat(n,K_{r+1})=(r-1)(n-r+1)+\binom{r-1}{2}$ and characterized the extremal graphs.
		Since then,	determining $\sat(n,\mathcal{F})$ for given $\mathcal{F}$ and $n$, and characterizing the $n$-vertex $\mathcal{F}$-saturated graph $G$ with $\sat(n,\mathcal{F})$ edges become the challenging problems in extremal graph theory. For history and developments on the theory of saturation number, we refer the reader to the dynamic survey~\cite{CF21} by Currie, Faudree, Faudree, and Schmitt.
		
		Let $P_{x_1x_r}=x_1x_2\dots x_r$ denote the path  of length $r-1$ with edge set $\{x_1x_2,x_2x_3,\dots,x_{r-1}x_{r}\}$. Let $C_r$ denote the cycle of length $r$ for $r\ge 3$. 
		For integers $s$ and $r$ with $s \le r$, let $[s,r] = \{s,s + 1, s + 2, \dots , r\}$.  For any  $I=[s,r]$, let $\mathcal{C}_I$ be the family of all cycles of length $\ell \in I$.
		
		If $s=r,$ the $\mathcal{C}_I$ is a single cycle $C_r.$ There are many known results for $\sat(n, C_r)$, such as:
		
		$\bullet$ $\sat(n,C_3)=n-1$ for $n\ge 3$; (Erd\H{o}s, Hajnal and Moon~\cite{EHM64})
		
		$\bullet$  $\sat(n, C_4)=\lfloor\frac{3n-5}{2}\rfloor$ for $n\ge 5$; (Ollmann \cite{Oll72}, Tuza~\cite{Tuz89}, Fisher, Fraughnaugh and Langley~\cite{FFL97}) 
		
		$\bullet$  $\sat(n, C_5)=\lceil\frac{10}{7}(n-1)\rceil$ for $n\ge 21$; (Chen~\cite{Che09,Che11})
		
		$\bullet$ $\frac{4n}{3}-2\le\sat(n,C_6)\le\frac{4n+1}{3}$ for $n\ge 9$; (Lan, Shi, Wang and Zhang~\cite{Lan21})
		
		$\bullet$  $(1+\frac{1}{r+2})n-1<\sat(n,C_r)<(1+\frac{1}{r-4})n+\binom{r-4}{2}$ for all $r\ge 7$ and $n\ge 2r-5$; (F\"{u}redi and Kim~\cite{FK13}) 
		
		$\bullet$
		$\sat(n,C_n)=\lceil\frac{3n}{2}\rceil$ for $n=17$ or $n\ge 19$. (Clark, Entringer and Shapiro~\cite{Clark83, Clark92}, Lin, Jiang, Zhang and Yang \cite{LJZY97}) 
		
		If $r=+\infty,$ here are some known exact results for cycle families $\mathcal{C}_{[s,+\infty)}$:
		
		$\bullet$  $\sat(n, \mathcal{C}_{[4,+\infty)})=\lceil\frac{5n}{4}-\frac{3}{2}\rceil$ for $n\ge 1$; (Ferrara et al.~\cite{Subdivision12})
		
		$\bullet$  $\sat(n, \mathcal{C}_{[5,+\infty)})=\lceil\frac{10(n-1)}7\rceil$ for $n\ge 5$; (Ferrara et al.~\cite{Subdivision12})
		
		$\bullet$  $\sat(n, \mathcal{C}_{[6,+\infty)})=\lceil\frac{3(n-1)}2\rceil$ for $n\ge 10$. (Ma, Hou, Hei and Gao~\cite{MHHG21})
		
		If $s=4$ and $r=5,$
		Ma \cite{Ma25} (2025) determined the saturation number on $\mathcal{C}_{[4,5]}.$

	\begin{thm}\cite[Theorem 1.2]{Ma25}
		For $n\ge 1,$ $\sat(n, \mathcal{C}_{[4,5]}) = \lceil \frac{5n}{4} - \frac{3}{2} \rceil.$
	\end{thm}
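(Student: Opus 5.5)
We prove the two bounds separately. The upper bound comes from a construction. The lower bound comes from a structural analysis of an arbitrary $\mathcal{C}_{[4,5]}$-saturated graph, compared against the known bound $\sat(n,\mathcal{C}_{[4,+\infty)})=\lceil\frac{5n}{4}-\frac32\rceil$ of Ferrara et al.~\cite{Subdivision12}.

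\textbf{Upper bound.} Take a $\mathcal{C}_{[4,+\infty)}$-saturated graph on $n$ vertices with the optimal number of edges. Such a graph has no cycles of length at least $4$, so it is $\mathcal{C}_{[4,5]}$-free; but saturation for the larger family only guarantees some long cycle on adding an edge. We therefore take the Ferrara et al. extremal graphs explicitly: triangles glued along a tree-like skeleton, with pendant structures arranged so that every nonadjacent pair is joined by a path of length $3$ or $4$. We then check directly that each non-edge closes a $C_4$ or a $C_5$. The explicit family I would use is a ``star of triangles'': a central vertex $c$ joined to triangles and to pendant paths, suitably modified so that the diameter-like condition holds (any two nonadjacent vertices at distance $3$ or $4$). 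We must handle $n \bmod 4$ separately to hit the ceiling exactly, and check small $n$ by hand.

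\textbf{Lower bound.} Let $G$ be $\mathcal{C}_{[4,5]}$-saturated. Then $G$ is connected: an edge between components creates no cycle. Since $G$ has no $C_4$ or $C_5$, any two triangles share at most one vertex. The saturation condition says that every nonadjacent pair $u,v$ is joined by a path of length $3$ or $4$.

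We use discharging. Each vertex receives charge $d(v)$, and the goal is to show $\sum d(v)\ge \frac{5n}{2}-3$. Degree-$1$ vertices are the deficient ones. First we show structural facts:
\begin{itemize}
\item Two degree-$1$ vertices cannot share a neighbor, since they would then be at distance $2$ with no path of length $3$ or $4$.
\item A vertex of degree $1$ must have its neighbor lying on a triangle or having high degree.
\item Long induced paths of degree-$2$ vertices are impossible.
\end{itemize}
Then we redistribute charge so that every vertex ends with at least $5/2$, with a global deficit of at most $3$. Leaves take charge from their neighbors, and triangles, as blocks, supply surplus.

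Equivalently, we could reduce to the Ferrara et al. argument: their proof likely uses only the property that non-edges are joined by paths of length at least $3$ in a graph with no long cycles. So we could adapt it by noting that the only place the $C_{\ge 6}$-freeness is used is in bounding the block structure. The block structure here is also forced: every block is a $K_2$ or a triangle, except possibly blocks that contain $C_k$ with $k\ge 6$. Those larger blocks need extra care.

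The main obstacle is exactly these blocks containing long cycles ($C_6$, $C_7$, and so on), which are allowed in $\mathcal{C}_{[4,5]}$-saturated graphs but not in the setting of Ferrara et al. For such a $2$-connected block $B$ we would try to show $e(B)\ge \frac{5}{4}|V(B)|$ plus enough slack to pay for the leaves attached to it. The intended route is ear decomposition: $B$ has girth $3$ or at least $6$ among its short cycles, and the saturation condition applied to chords of a long cycle forces many additional edges. That yields a local density that beats $5/4$. The discharging over the block-cutvertex tree then completes the count.
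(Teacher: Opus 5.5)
The paper does not prove this theorem; it quotes it from Ma. So there is no in-paper argument to compare against. Judged on its own, your proposal has a genuine gap in the lower bound.

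The upper bound is fine once made concrete. Take the graph $F_k^+$ of Section~2 and, according to $n \bmod 4$, add nothing, a pendant edge at $v_0$, a bare triangle at $v_0$, or a triangle at $v_0$ carrying one pendant edge. Each of these is $\mathcal{C}_{[4,5]}$-saturated with exactly $\lceil \frac{5n}{4}-\frac32\rceil$ edges. Two corrections to your sketch: the requirement is a \emph{path} of length $3$ or $4$ between nonadjacent vertices, not distance $3$ or $4$; and a pendant path of length $2$ at the centre already breaks saturation.

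For the lower bound you never state discharging rules. ``Every vertex ends with charge at least $5/2$ and the total deficit is at most $3$'' is just the theorem restated. The one concrete mechanism you give for non-triangle blocks is false. You expect saturation to force chords on long cycles, giving $e(B)\ge\frac54|V(B)|$ plus slack. But $C_6$ is chordless and $\mathcal{C}_{[4,5]}$-saturated: a chord closes either two $C_4$'s or a $C_3$ and a $C_5$. Moreover $e(C_6)=6=\lceil\frac{5\cdot 6}{4}-\frac32\rceil$, so $C_6$ is extremal and uses up the entire additive slack by itself. It also occurs as a block of larger saturated graphs: glue any number of triangles $v_0a_ib_i$ at one vertex $v_0$ of a $C_6$. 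Hence no block-by-block inequality of your form can hold. Falling back on Ferrara et al.\ does not help, because in their setting every block is $K_2$ or $K_3$, which is exactly what fails here.

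The right bookkeeping is relative. For connected $G$, summing over blocks,
\[
e(G)-\tfrac54(n-1)=\sum_{B}\Bigl(e(B)-\tfrac54\bigl(|V(B)|-1\bigr)\Bigr),
\]
and the lower bound is equivalent to this sum being at least $-\frac14$. A bridge and a $C_6$-block each contribute $-\frac14$; a triangle contributes $+\frac12$. So the real content of the lower bound is that deficient blocks are always paid for by surplus blocks such as triangles, with at most one unpaid $-\frac14$ in all of $G$. Proving this needs genuine structural lemmas, for example:
\begin{itemize}
\item any path between two vertices of a block stays inside the block, so every block is itself saturated and the only cycle blocks are $C_3$ and $C_6$;
\item a vertex is incident with at most one bridge;
\item every other edge at an endpoint of a bridge lies in a triangle;
\item everything hanging off a cut vertex of a $C_6$-block must be adjacent to that cut vertex.
\end{itemize}
It then needs a charging scheme built on these facts. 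None of this is in your plan.

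Your second structural bullet is also too weak to be useful. What saturation actually gives, as the analogue of Fact~4, is that every edge at the neighbour of a leaf, other than the pendant edge, lies in a triangle.

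For comparison, the paper's own $\mathcal{C}_{[4,6]}$ argument meets exactly this phenomenon. There a single induced $C_7$ (Part~5 of Step~6.1) costs the extra $\frac14$. Much of the case analysis (Part~4 and Step~6.2) goes into showing that such a loss occurs only once.
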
	
		Note that $\sat(n, \mathcal{C}_{[4,+\infty)})=\sat(n, \mathcal{C}_{[4,5]}) = \lceil \frac{5n}{4} - \frac{3}{2} \rceil,$ Ma \cite{Ma25} conjectured $\sat(n, \mathcal{C}_{[4,r]})= \lceil \frac{5n}{4} - \frac{3}{2} \rceil$ for any integer $r \ge 5$.

	\begin{conj}\cite[Conjecture 3.1]{Ma25}\label{conjecture1}
		For any integer $r \ge 5$, there exists a number $n(r)$, such that for any integer $n \ge n(r)$, $\sat(n, \mathcal{C}_{[4,r]})=\lceil \frac{5n}{4} - \frac{3}{2} \rceil$.
	\end{conj}

	In this paper, we first  prove that $\sat(n, \mathcal{C}_{[4,r]}) \le \lceil\frac{5n}{4} - \frac{r+1}{4}\rceil$ for $r\ge 6$ and  $n \ge r+1$, which disproves Conjecture~\ref{conjecture1} for $r\ge 6.$	
	\begin{thm}\label{thm_c4r}
		For $r\ge 6$ and  $n \ge r+1$, $sat(n, \mathcal{C}_{[4,r]}) \le \lceil\frac{5n}{4} - \frac{r+1}{4}\rceil$.
	\end{thm}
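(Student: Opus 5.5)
The plan is an explicit construction: for every $r\ge 6$ and $n\ge r+1$ we exhibit a $\mathcal{C}_{[4,r]}$-saturated graph on $n$ vertices with $\lceil\frac{5n}{4}-\frac{r+1}{4}\rceil$ edges. The edge count suggests the shape. A core with $r+1$ vertices and $r+1$ edges contributes ratio $1$, and each further block of $4$ vertices adds $5$ edges. Then $e(G)=(r+1)+\frac54(n-r-1)=\frac{5n}{4}-\frac{r+1}{4}$ when $4\mid n-r-1$.

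For the core we take the cycle $C_{r+1}$. It is itself $\mathcal{C}_{[4,r]}$-saturated. Since its length is $r+1$, it contains no forbidden cycle. A chord splits it into two cycles of lengths $a+1$ and $b+1$ with $a+b=r+1$ and $a,b\ge 2$. If $a=2$, the other cycle has length $r$, which is forbidden. Otherwise both lengths lie in $[4,r-1]$. This settles $n=r+1$.

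For the gadget, fix a vertex $v$ of the cycle. A gadget consists of new vertices $d,a,b,c$ and the edges $vd,da,ab,bc,ca$: a pendant path $v\,d$ followed by a triangle hanging at $a$. Every block of the resulting graph is an edge, a triangle, or the core $C_{r+1}$. Hence the only cycles are triangles and the core, and the graph is $\mathcal{C}_{[4,r]}$-free.

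Saturation rests on one observation. If $x,y$ are nonadjacent, adding $xy$ creates a cycle of length $\ell+1$ for every $x$–$y$ path length $\ell$. Each triangle met along the path lets us lengthen the path by one (go around two sides instead of one). Going around the core lets us switch between two complementary path lengths. So it suffices to check that for every nonadjacent pair some achievable path length lies in $[3,r-1]$. Length $2$ alone is not enough, and this is exactly why we avoid pendant vertices whose neighbour has other neighbours. Every new vertex is either on a triangle or ($d$) adjacent to two vertices that are nonadjacent to each other. For instance, adding $db$ gives the $4$-cycle $dacbd$, and adding $va$ gives a $4$-cycle via $v\,d\,a$ plus the triangle detour.

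The checks split into four kinds of pairs:
\begin{itemize}
\item pairs inside one gadget together with $v$;
\item pairs in two different gadgets, where paths go through $v$ and have lengths between $4$ and $8$ with triangle flexibility;
\item a gadget vertex and a core vertex $w$, where we may route either way around the core;
\item core pairs, already handled above.
\end{itemize}

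When $n-r-1\not\equiv 0\pmod 4$, we use the ceiling slack to attach $1$, $2$ or $3$ leftover vertices. The options are a pendant triangle at $v$ (two vertices, three edges), or a single vertex joined to two neighbours of $v$ on the core (one vertex, two edges; this creates only cycles of length $4$ or $r+2$, so it needs care). The choice is made case by case so that the count matches $\lceil\frac{5n}{4}-\frac{r+1}{4}\rceil$.

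The main obstacle is the length bookkeeping for gadget–core pairs and gadget–gadget pairs when $r$ is small ($r=6,7$). The available path lengths near $v$ and the distance to the far side of $C_{r+1}$ (about $(r+1)/2$) must together produce a cycle of length at most $r$. We must also avoid accidentally creating a forbidden cycle in the remainder gadgets. If attaching everything at one vertex $v$ fails for $r=6$, the first adjustment is to spread the gadgets over two adjacent core vertices, or to let the $d$-vertex attach to two consecutive core vertices. Either change shortens the relevant paths by one.
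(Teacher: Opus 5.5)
Your gadget does not produce a saturated graph, so the construction already fails at $n=r+5$ (one gadget on $C_{r+1}$).

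\textbf{The main failure.} In the gadget, $d$ has degree $2$ and both $vd$ and $da$ are bridges. So the only $v$--$a$ path is $v\,d\,a$, and $G+va$ gains nothing but the triangle $v\,d\,a$. The ``triangle detour'' you invoke does not exist: the triangle $abc$ is attached at the endpoint $a$, and no $v$--$a$ path can go around it.

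\textbf{The same defect elsewhere.}
If $v_1$ is a core neighbour of $v$, the $d$--$v_1$ paths have lengths $2$ and $r+1$ only, so $G+dv_1$ gains only cycles of lengths $3$ and $r+2$.
With two gadgets at $v$, adding $d_1d_2$ creates only a triangle.
For $r=6$, the far vertex $b$ and the core vertex $v_3$ at distance $3$ from $v$ are joined only by paths of lengths $6,7,8$, so $G+bv_3$ has no cycle of length at most $6$.

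\textbf{Your repairs do not help.} Moving gadgets to other core vertices leaves the pairs $va$ and $dv_1$ exactly as they are. Joining $d$ to two consecutive core vertices costs a sixth edge per four vertices, which breaks the $5/4$ count. Separately, the remainder option of a vertex $x$ joined to both core neighbours $v_1,v_r$ of $v$ creates the $4$-cycle $x\,v_1\,v\,v_r$. That cycle is forbidden outright, not merely delicate.

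\textbf{The missing idea: turn the gadget around.} The triangle must sit \emph{at} the attachment vertex, with the pendant edges hung on the other two triangle vertices. That is, take new vertices $u,w,u',w'$ with edges $vu,vw,uw,uu',ww'$ (still $4$ vertices and $5$ edges). This is the paper's graph $H_{r,k}$: $F_k^+$ glued to $C_{r+1}$ at $v_0$.

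Now the inner end of every bridge lies on a triangle through $v_0$. Hence every new vertex is also joined by a path of length $3$ to each vertex at distance $2$ from it. Every new vertex is within distance $2$ of $v_0$, so the worst pair is a pendant vertex and a core vertex antipodal to $v_0$. Their distance is $2+\lfloor (r+1)/2\rfloor\le r-1$, and this is exactly where $r\ge 6$ enters.

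The residues of $n-r-1$ modulo $4$ are handled by deleting one, two or three pendant vertices. This leaves, respectively, a triangle with one pendant, a bare triangle, or both. The case $n=r+2$ is handled by $C_{r+1}$ plus a vertex adjacent to two consecutive cycle vertices.
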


Moreover, we prove that the bound in Theorem \ref{thm_c4r} is tight for $r=6$.

	\begin{thm}\label{thm_sat_C456}
		For $n \ge 7$, $\sat(n, \mathcal{C}_{[4,6]})=\lceil\frac{5n}{4} - \frac{7}{4}\rceil$.
	\end{thm}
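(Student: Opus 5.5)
The upper bound follows at once from Theorem~\ref{thm_c4r} with $r=6$, since $\lceil\frac{5n}{4}-\frac{7}{4}\rceil$ is exactly the bound given there. So the whole task is the lower bound: every $n$-vertex $\mathcal{C}_{[4,6]}$-saturated graph $G$ with $n\ge 7$ satisfies $e(G)\ge \frac{5n-7}{4}$, that is, $4e(G)\ge 5n-7$. I would prove this with a discharging or weight-counting argument modelled on the proofs for $\mathcal{C}_{[4,+\infty)}$ and $\mathcal{C}_{[4,5]}$. Give each vertex $v$ initial charge $d_G(v)$, so the total charge is $2e(G)$. Then redistribute charge so that every vertex ends with at least $5/2$, apart from a bounded deficit of at most $7/2$ in total.

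The first step collects structural facts about $G$.
\begin{enumerate}[label=(\roman*)]
\item $G$ is connected. Otherwise adding an edge between components creates no cycle.
\item $G$ has girth $3$ or at least $7$. Every cycle is a triangle or has length at least $7$.
\item Any two nonadjacent vertices $u,v$ are joined by a path of length $3$, $4$ or $5$.
\item Two triangles share at most one vertex, and blocks are very restricted: since $C_4,C_5,C_6$ are forbidden, each $2$-connected block is a triangle, an edge, or has girth at least $7$ apart from triangles.
\item Pendant vertices are controlled. If $x$ is a leaf with neighbour $y$, every non-neighbour of $x$ must be at distance $2$, $3$ or $4$ from $y$. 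This forces $G$ to have small radius around leaves. In particular, two leaves cannot share a neighbour, since joining them would create a $C_3$ only.
\item The neighbour of a degree-$2$ vertex lying on no triangle is constrained in a similar way.
\end{enumerate}

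The second step is the charge redistribution. Leaves have deficit $3/2$ and degree-$2$ vertices have deficit $1/2$. I would let each vertex of degree $d\ge 3$ send $1/2$ along edges to neighbouring leaves and degree-$2$ vertices, and let triangles pool their charge. Here I would use (iii) to show that a vertex of degree $d\ge 3$ cannot be adjacent to too many low-degree vertices without violating saturation. For example, a leaf's neighbour must have another neighbour of degree at least $3$, otherwise some added edge yields no short cycle. The natural candidate extremal structure, presumably the construction behind Theorem~\ref{thm_c4r}, consists of triangles with pendant leaves attached around a central short cycle of length $7$. So I would show the charge rules are tight exactly there.

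The main obstacle is the global deficit term. The bound $-\frac{7}{4}$, rather than the $-\frac32$ for $\mathcal{C}_{[4,5]}$, comes from the fact that an induced $C_7$ (or longer cycle) can now exist. So the argument must show that the "root" structure absorbs at most $7/2$ of total deficit, compared with $3$ in the tree-like case. My plan is to split into two cases. In the first, $G$ is a cactus of triangles and bridges; then I count directly via $e=n-1+t$, where $t$ is the number of triangles, and show $t\ge \frac{n-3}{4}$ using (v). In the second, $G$ contains a cycle of length at least $7$; then I consider a shortest such cycle $C$ and show that the vertices of $C$ together with their attached low-degree vertices carry enough surplus. The boundary cases of small $n$, from $7$ up to about $10$, would be checked separately.
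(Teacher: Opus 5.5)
Your upper bound and the reformulation $2e(G)\ge \frac52 n-\frac72$ are correct. Your first case can also be closed at once: a $\mathcal{C}_{[4,6]}$-saturated graph with no cycle of length at least $4$ is $\mathcal{C}_{[4,+\infty)}$-saturated, so Ferrara et al.\ already give $e(G)\ge\lceil\frac{5n}{4}-\frac32\rceil$. The genuine gap is the second case. It is the whole theorem, and you only assert it.

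Saturation does not rule out deficient local configurations. In $G_{6,k}$ with $n\equiv 3\pmod 4$ the total deficit is exactly $\frac72$, and the six degree-$2$ vertices of the $C_7$ alone have deficit $3$. So the long cycle carries the deficit, not a surplus. You also give no reason why runs of degree-$2$ vertices, or a degree-$3$ vertex whose three neighbours all have degree $2$, cannot occur far from your shortest long cycle $C$. The paper's local discharging (Lemma~\ref{discharge}) gives $e(G)\ge\frac54 n$ only when none of Types I--IV occurs, and Lemma~\ref{lem:no-type4} excludes only Type IV.

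What keeps the total deficit bounded is a global fact: Fact 5, that every non-adjacent pair is joined by a path of length $3$, $4$ or $5$, so every vertex has eccentricity at most $5$. Your plan never says how this enters. It does not explain how charge from distant parts of $G$ reaches $C$, or why a second long cycle elsewhere does not add its own deficit.

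The paper handles exactly this point as follows:
\begin{itemize}
\item It roots a BFS at a minimum-degree vertex, or at a Type I--III vertex.
\item It greedily builds $V^\star\supseteq L^0\cup\dots\cup L^3$ so that every increment has edge-to-vertex ratio at least $\frac54$. In the leaf-rooted case there is one allowed exception: a single increment of ratio $\frac65$, coming from an induced $C_7$ whose two middle vertices lie in $L^5$. This is where $-\frac32$ becomes $-\frac74$.
\item It shows, via Claims~\ref{c2} and~\ref{c4} and the case tables, that every later such $C_7$ brings an extra edge.
\item It counts $L^4\cup L^5$ separately by weights.
\end{itemize}

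Your sketched local rules also fail as stated. A leaf has deficit $\frac32$ but receives only $\frac12$ from its neighbour. In $F_k^+$, each triangle together with its two leaves has total charge $8$ against a target of $10$. So the hub must send $1$ along each triangle edge, and your rules do not say where the hub's resulting shortfall is paid.

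The structural fact you give as an example is false. Take the friendship graph $F_t$ with one pendant edge attached at its centre. It is $\mathcal{C}_{[4,6]}$-saturated, since every non-edge closes a $C_4$. Yet every non-leaf neighbour of the centre has degree $2$.

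Similarly, (v) should say that every non-neighbour of $x$ is joined to $y$ by a path of length $2$, $3$ or $4$, not that it is at distance $2$, $3$ or $4$. The other neighbours of $y$ are non-neighbours of $x$ at distance $1$. For them the condition is exactly Fact 4, that $N_G(y)\setminus\{x\}$ induces a perfect matching, and that is the property the leaf analysis actually needs.
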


	The rest of the paper is arranged as follows. In Section 2, the upper bound of $\sat(n, \mathcal{C}_{[4,r]})$ will be given. In Section 3, we give the proof of Theorem~\ref{thm_sat_C456}. In Section 4, there are some remarks for $\mathcal{C}_{[4,r]}$-saturated graphs.
	
	\section{The Upper Bound of $\sat(n, \mathcal{C}_{[4,r]})$}
	
	Let $F_k$  denote the graph on $2k+1$ vertices consisting
	of $k$ triangles all sharing exactly one same vertex $v_0.$ Let $F_k^+$ be the graph on $4k + 1$ vertices obtained by adding one pendant edge to each vertex in $F_k$ except $v_0$. See  Figure~\ref{fig_graph_Fk+}.
	Given a cycle $C_{r+1} = v_0v_1v_2 \dots v_{r}v_0$ of length $r+1$. Let $H_{r,k}$ be a graph obtained from $C_{r+1}$ and $F_k^+$ by  identifying vertex $v_0$ of graph $C_{r+1}$  and vertex $v_0$ of graph $F_k^+$.
	We define the graph $G_{r,k}$ on $n = (4k + r - 1) + d$ vertices as follows:
	if $n=r+1,$ then $G_{r,k}=C_{r+1};$ if $n=r+2,$ then $G_{r,k}=C_{r+2}^+;$ if $n\ge r+3$, then
	\[
	G_{r,k}=\left\{
	\begin{aligned}
		&H_{r,k}\setminus \{u_1^\prime, w_1^\prime\}& ~~\text{if $d=0,$}\\
		&H_{r,k} \setminus \{w_1^\prime\} & ~~\text{if $d=1,$}\\
		&H_{r,k} & ~~\text{if $d=2,$}\\
		& H_{r,k} \setminus \{w_1^\prime,u_2^\prime,w_2^\prime\} &~~\text{if $d=3.$}
	\end{aligned}
	\right.
	\]
	See Figure~\ref{fig_graph_Grk}.
	Note that $e(G_{r,k}) =\lceil\frac{5n}{4} - \frac{r+1}{4}\rceil$. 	
		\begin{figure}
		\centering
		\includegraphics[width=4cm,height=1.75cm]{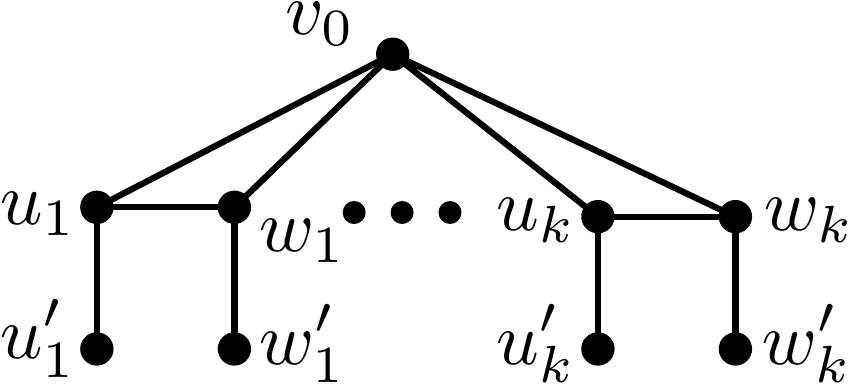}
		\caption{ Graph $F_k^+$}
		\label{fig_graph_Fk+}
	\end{figure}
	
	\begin{figure}
		\centering
		\includegraphics[width=14.5cm,height=2.75cm]{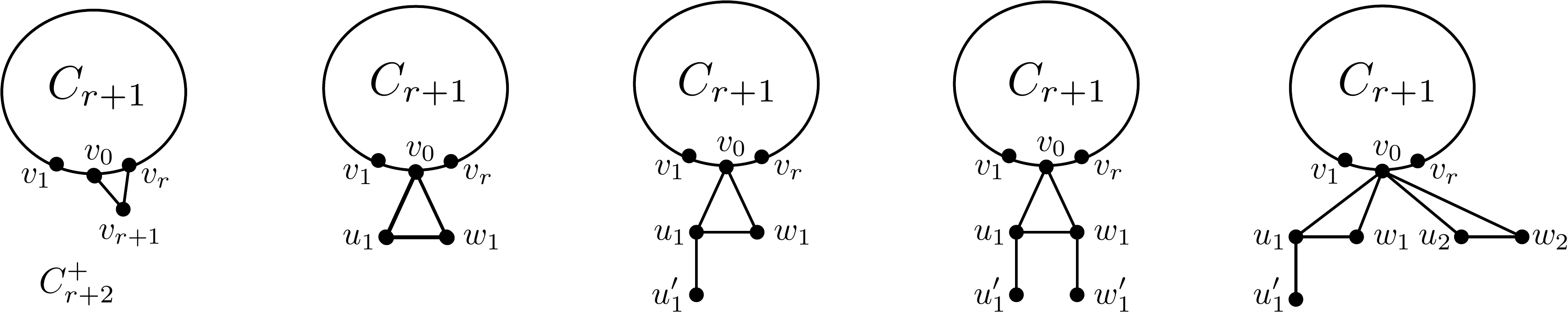}
		\caption{ Graph $G_{r,k}$ for $n = r+2, r+3,r+4,r+5,r+6$}
		\label{fig_graph_Grk}
	\end{figure}

	\begin{proof}[Proof of Theorem~\ref{thm_c4r}]
		We just prove that $G_{r,k}$ is a $\mathcal{C}_{[4,r]}$-saturated graph on $n$ vertices. 
        If $n=r+1$ or $n=r+2$, then $C_{r+1}$ and $C_{r+2}^+$ are clearly $\mathcal{C}_{[4,r]}$-saturated.
		If $n\ge r+3,$ then $V(G_{r,k})=V(C_{r+1})\cup (V(F_k^+)\setminus \{v_0\}).$
		For any vertex $v\in V(C_{r+1})$ (resp. $v\in V(F_k^+)$), there is a path $P_{vv_0}$ of length at most  $\lceil \frac{r}{2} \rceil$ (resp. 2)
		connecting $v$ and $v_0.$
		For any two non-adjacent vertices $v,u$ in $G$, there is a path $P_{vu}=P_{vv_0}\cup P_{v_0u}$ connecting $v$ and $u.$
		Thus, $G+vu$ contains a cycle $C_{\ell}$ of length $\ell=|P_{vv_0}|+|P_{v_0u}|+1.$ 
		Since  $v,u$ are non-adjacent, then $|P_{vv_0}|\le \lceil \frac{r}{2} \rceil-1$ or $|P_{v_0u}|\le \lceil \frac{r}{2} \rceil-1$, which implies that $\ell\le \lceil \frac{r}{2} \rceil+\lceil \frac{r}{2} \rceil-1+1\le r$.
		It follows that $G_{r,k}$ is  $\mathcal{C}_{[4,r]}$-saturated. Hence,  $\sat(n, \mathcal{C}_{[4,r]})  \le e(G_{r,k})=\lceil\frac{5n}{4} - \frac{r+1}{4}\rceil.$ 
	\end{proof} 
	
	\section{Proof of Theorem~\ref{thm_sat_C456}}
		By Theorem \ref{thm_c4r}, we have known that $\sat(n, \mathcal{C}_{[4,6]}) \le \lceil\frac{5n}{4} - \frac{7}{4}\rceil.$
		In this section, we will prove that $\sat(n, \mathcal{C}_{[4,6]}) \ge  \frac{5n}{4} - \frac{7}{4}.$
		In the following of this section, we suppose that $G$ is a $\mathcal{C}_{[4,6]}$-saturated graph on $n$ vertices. 
		The following five facts are  trivial by the saturation of $G.$
		
		\textbf{Fact 1.} $G$ is connected.
		
		\textbf{Fact 2.}  For any vertex $v\in V (G),$  $G[N_G (v)]=tK_2\cup sK_1$.
		
		\textbf{Fact 3.} Any vertex in $G$ has at most one  neighbor of degree $1$.

		Let $\mathcal{Q} = \{ v \in V(G) : v \text{ has a neighbor of degree } 1 \}$. For any vertex $v \in \mathcal{Q}$, denote by $p_v$ its unique neighbor of degree 1.
		
		\textbf{Fact 4.} If any vertex $v\in \mathcal{Q},$ then  $G[N_G (v)\setminus\{p_v\}]=tK_2.$

		\textbf{Fact 5.} Any pair of non-adjacent vertices $u,v$ is connected by a path of length $3 ,4$ or $5.$
		
		For a fixed vertex $v \in V(G)$, let $L^i$ denote the set of vertices at distance $i$ from $v$. By Fact 5, $L^i = \emptyset$ for all $i \ge 6$. Thus the vertex set of $G$ is partitioned as $V(G) = L^0 \cup L^1 \cup L^2 \cup L^3 \cup L^4 \cup L^5,$ where $L^0=\{v\}$. We now define the following three properties for a subset $V' \subseteq V(G)$:
	
		$\bullet$	\textbf{Property 1:} $L^0 \cup L^1 \cup L^2 \cup L^3 \subseteq V^\prime$;
			
		$\bullet$	\textbf{Property 2:} $e(V^\prime) \ge \frac{5}{4}|V^\prime| -\frac{7}{4}$;
			
		$\bullet$	\textbf{Property 3:} if any vertex $v \in V^\prime \cap \mathcal{Q}$, then $p_v \in V^\prime$.
	
		The following lemma plays a central role in the proof of Theorem \ref{thm_sat_C456}; the proof of the lemma is presented in Section~3.1.		
		\begin{lem}\label{lem_existence_of_Vstar}
			Suppose that $v$ is a vertex of minimum degree in $G$ and $L^0=\{v\}$, where $d_G(v)=1$ or $2$. There exists a vertex subset $V^{\star} \subseteq V(G)$ satisfying Properties $1, 2,$ and 
			
			\textbf{$\bullet$ Property $3^+$:} $d(u) \ge 2$ for any vertex $u \in V(G) \setminus V^{\star}$.
		\end{lem}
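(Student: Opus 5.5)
We will construct $V^\star$ explicitly from the BFS layers around $v$, starting from $V_0=L^0\cup L^1\cup L^2\cup L^3$. Every vertex at distance at least $4$ from $v$ that has degree $1$ is added, together with its unique neighbor. The result is a set satisfying Properties 1 and $3^+$. Property $3$ holds automatically: if $u\in V^\star\cap\mathcal{Q}$ and $p_u\notin V^\star$, then $p_u$ would be a degree-$1$ vertex outside $V^\star$, which $3^+$ forbids.

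To verify Property 2 we count edges by a discharging/charging scheme. Each vertex $w\in V^\star\setminus\{v\}$ is charged to an edge joining it to a vertex in an earlier layer; this gives $|V^\star|-1$ edges. We then show that there are at least $\frac14|V^\star|-\frac34$ further edges inside $V^\star$. These come from three sources. The first is the triangles forced by Fact 2 and Fact 4: a vertex of $\mathcal{Q}$ has all its other neighbors paired into triangles. The second is the edges that saturation forces: two non-adjacent vertices of $L^1\cup L^2$ must be joined by a path of length $3$, $4$ or $5$ (Fact 5), and since $G$ has no $C_4,C_5,C_6$ such short paths create extra edges. The third is the minimum-degree assumption: every vertex has degree at least $d_G(v)$.

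Concretely, the plan is a weighting argument. Each vertex gets weight $1$ for its tree edge. Each non-tree edge, and in particular each triangle edge inside a layer or between consecutive layers, distributes weight to nearby vertices. The aim is to show that every degree-$1$ vertex, together with its neighbor, receives total weight at least $\frac54$ per vertex. Also, every vertex of degree at least $2$ whose extra edges go outward is compensated by the vertices below it. The constant $-\frac74$, rather than $-\frac34$ from a naive count, has to absorb the deficit at the root $v$ and its degree-$1$ or $2$ neighborhood. We therefore split into the two cases $d_G(v)=1$ and $d_G(v)=2$.
\begin{itemize}
\item When $d_G(v)=1$, $v=p_u$ for the unique $u\in L^1$, and Fact 4 forces $N(u)\setminus\{v\}$ to induce a perfect matching. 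This gives triangles through $u$.
\item When $d_G(v)=2$, either $v$ lies in a triangle or its two neighbors are joined by a path of length $3$ to $5$ avoiding $v$. This forces structure in $L^2\cup L^3$.
\end{itemize}

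The main obstacle will be controlling vertices in $L^3$ whose extra edges go to $L^4$, outside $V^\star$. Within $V^\star$ such vertices may look like tree leaves carrying weight only $1$. To handle them, we use saturation between a vertex $x\in L^3$ and a non-adjacent vertex in $L^1$ or $L^2$. The required path of length at most $5$, together with the absence of $C_4$, $C_5$ and $C_6$, should force either a triangle at $x$ inside $V^\star$ or a second edge from $x$ back into $L^2$.

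If this local argument fails, the fallback is to enlarge $V^\star$ by the $L^4$-neighbors of such problematic $L^3$ vertices, together with their triangle partners. Then we check by the same charging that each added piece (a triangle with pendant edges, as in $F_k^+$) contributes at least $\frac54$ edges per vertex. Whatever is added, we keep closing $V^\star$ under taking the neighbors of degree-$1$ vertices, so that Property $3^+$ remains valid.
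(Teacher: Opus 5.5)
\textbf{There is a genuine gap: your $V^\star$ can fail Property 2.} The set you propose, $L^0\cup\dots\cup L^3$ plus each degree-$1$ vertex at distance $\ge 4$ and its neighbour, does not satisfy Property 2 in general, and the local argument meant to rescue it is false.

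Here is a counterexample. Take a triangle $uww'$, pendants $p$ at $u$ and $p'$ at $w'$, three paths $wx_iy_iz_i$ ($i=1,2,3$), and a triangle $z_1z_2z_3$.
\begin{itemize}
\item Its only cycle lengths are $3,7,8$.
\item Every non-adjacent pair is joined by a path of length $3$, $4$ or $5$, for instance $x_iy_iz_iz_jy_jx_j$, $wx_jy_jz_jz_iy_i$, $uw'wx_i$, $puwx_iy_iz_i$.
\item Hence it is $\mathcal{C}_{[4,6]}$-saturated with $\delta=1$.
\end{itemize}
Rooted at $p$ we get $L^1=\{u\}$, $L^2=\{w,w'\}$, $L^3=\{x_1,x_2,x_3,p'\}$, and there is no degree-$1$ vertex at distance $\ge 4$. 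So your $V^\star$ has $8$ vertices and $8$ edges, while Property 2 demands $\frac{5}{4}\cdot 8-\frac{7}{4}=8.25$.

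The $x_i$ have neither a triangle inside $V^\star$ nor a second edge back to $L^2$. In fact, when $\delta=1$ no vertex of $L^3\cup L^4$ has two neighbours in the previous layer (Proposition~\ref{prop3}). Also, a vertex of $L^3_F(w)$ has no neighbour in $L^3$ at all, since that would close a $C_5$ through the unique vertex of $L^1$. So every free vertex of $L^3$ brings exactly one edge. Your fallback does not repair this: the $y_i$ have no triangle partners, and adding them only lowers the ratio.

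\textbf{The missing idea is what saturation forces around free children.} Two non-adjacent free children $x_i,x_{i+1}$ of the same $w\in L^2$ lie on an induced $C_7$, namely $wx_iy_iz_iz_{i+1}y_{i+1}x_{i+1}w$, with $y$'s in $L^4$ and $z$'s in $L^4\cup L^5$ (Proposition~\ref{prop1}). $V^\star$ must be enlarged by these cycles, together with the pendants of their vertices in $\mathcal{Q}$.

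A fresh $C_7^-$ gives only $6$ edges on $5$ vertices, a ratio $\frac{6}{5}<\frac{5}{4}$. The substance of the paper's Steps 5--6 is to show the ratio is at least $\frac{5}{4}$ in each of the following situations:
\begin{itemize}
\item the new $C_7$ meets the current set;
\item it has a $z$ in $L^4$;
\item it is linked by a Fact 5 path to an earlier $C_7$ with both $z$'s in $L^5$.
\end{itemize}
This is Claim~\ref{c2}, the $20$ cases, Parts 1--4, and Claim~\ref{c4}. Exactly one $C_7$ of ratio $\frac{6}{5}$ is tolerated. In the $\delta=1$ case that single $\frac{1}{4}$ is what separates $-\frac{7}{4}$ from the $-\frac{3}{2}$ of Eq.~(\ref{eq4}); it is not a deficit at the root. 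In the example above, adding $C_7^-(x_1x_2)$ and then $x_3,y_3,z_3$ (five new edges, including $z_3z_1$ and $z_3z_2$) gives $17$ edges on $14$ vertices.

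\textbf{Your pendant rule also leaks.} Adding a pendant in $L^5$ with only its neighbour $q\in L^4_P$ gives $2$ edges for $2$ vertices. Consider a triangle $c_1c_2c_3$, a triangle at $c_1$ carrying the root pendant, and $m\ge 3$ triangles at $c_2\in L^3$ each with one pendant. This graph is saturated, and your set again violates Property 2. The paper's Step 7 avoids this by adding both members of each $L^4_P$ pair together with their pendants.

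\textbf{The $\delta=2$ case also lacks the needed mechanism.} The paper does not root at an arbitrary degree-$2$ vertex. Lemma~\ref{discharge} and the exclusion of Type IV give either $e(G)\ge\frac{5}{4}n$ outright, or a Type I--III vertex around which $V_0$ is seeded with the forced $C_7$'s. Free vertices of $L^3$ are then again handled by adding $C_7$'s and Fact 5 paths back to $L^1$. Nothing in your $\delta=2$ sketch supplies this.
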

		By Lemma \ref{lem_existence_of_Vstar}, we have  $V(G)=V^{\star} \cup (L^4 \setminus V^{\star})\cup  (L^5 \setminus V^{\star}).$
		\begin{myDef}
			For any vertex $z \in L^i \setminus V^{\star}$, $i\in \{4,5\},$ define $w(z)$ as the weight of $z:$
			\begin{equation*}\label{weight}
				w(z)=|N_G(z) \cap (L^{i-1} \cup V^{\star})|+\frac{1}{2}|N_G(z) \cap (L^i \setminus V^{\star})|.
			\end{equation*}
		\end{myDef}
		Observe that $w(z) \ge 1$ for any vertex $z \in V (G)  \setminus V^{\star}$, as $|N_G(z) \cap L^{i-1}| \ge 1$.
		
		Now we can calculate $e(G)$ by following equation:
		\begin{equation}\label{eG}
			e(G) = e(V^{\star})+\sum_{z \in  V (G) \setminus V^{\star}} w(z).
		\end{equation}
	
		We are ready to give the  proof of Theorem~\ref{thm_sat_C456}.
	
		\begin{proof}[Proof of Theorem~\ref{thm_sat_C456}] 
		If $\delta(G)\ge 3,$ then $e(G)\ge \frac{3}{2}n>\frac{5}{4}n - \frac{7}{4}$ for $n\ge 7.$ 
		Then we  consider that $\delta(G)=1$ or 2.  Suppose that $v$ is a vertex of minimum degree in $G$ and $L^0=\{v\}.$
		Let  vertex subset $V^\star\subseteq V(G)$ satisfy Properties 1, 2 and $3^+$.
		We now define  the vertex set $S_1$ as follows:
		$
			S_1 = \{z \in V(G) \setminus V^{\star} : w(z) = 1 \},
	$ that is, $|N_G(z) \cap (L^{i-1} \cup V^{\star})|=1$ and $|N_G(z) \cap (L^i \setminus V^{\star})|=0$.
			Then,
			for any $z \in V(G)  \setminus (V^{\star} \cup S_1)$, we have $w(z) \ge 3/2$. For any $z \in L^5 \setminus V^{\star}$, since $d(z) \ge 2$ (by Property $3^+$), it follows that 
			 $|N_G(z) \cap (L^4 \cup V^{\star})| \ge 2$
			or 
			$|N_G(z) \cap (L^5 \setminus V^{\star})| \ge 1,$
			which yields $w(z) \ge 3/2$. Consequently, no vertex of $L^5 \setminus V^{\star}$ lies in $S_1$, and hence $S_1 \subseteq L^4 \setminus V^{\star}$.
					
		Denote that $S_2=N_{L^{5} \setminus V^{\star}}(S_1)=\{z_1,\dots,z_k\}.$ Then 
			\begin{equation} \label{eq3}
				\begin{split}
				\sum_{z \in V (G) \setminus V^{\star}} w(z)= & \sum_{z \in L^{4} \setminus V^{\star}} w(z)+\sum_{z \in L^{5} \setminus V^{\star}} w(z)\\
				=&~ (\sum_{z \in S_1} w(z)+
				\sum_{z \in L^{4} \setminus (V^{\star}\cup  S_1)} w(z))+ (\sum_{z \in S_2} w(z)+
				\sum_{z \in L^{5} \setminus (V^{\star}\cup S_2)} w(z))\\
				\ge & \sum_{z \in S_1\cup S_2} w(z)+
				\sum_{z \in V(G) \setminus (V^{\star}\cup S_1\cup S_2)} \frac{3}{2}.
				\end{split}
			\end{equation}
	Let vertex sets $Z_1=\{z_1\}\cup N_{S_1}(z_1)$  and	
	$Z_i=\{z_i\}\cup \big(N_{S_1}(z_i)\setminus \bigcup_{j=1}^{i-1}N_{S_1}(z_j)\big)$ for $i=2,\dots,k,$ then $S_1\cup S_2=\cup_{i=1}^k Z_i$ and $\sum_{z \in S_1\cup S_2} w(z)=\sum_{i=1}^k\sum_{z \in Z_i}w(z)$. Moreover, we have 

	(i) if $|Z_i|=1,$ then $\sum_{z \in Z_i}w(z)=w(z_i)\ge \frac{3}{2}=\frac{3}{2}|Z_i|>\frac{5}{4}|Z_i|;$

	(ii) if $|Z_i|=2,$ then $\sum_{z \in Z_i}w(z)=w(z_i)+1\ge \frac{5}{2}=\frac{5}{4}|Z_i|;$ 

	(iii) if $|Z_i|=\ell \ge 3,$ then $w(z_i)\ge \ell-1$ and $\sum_{z \in Z_i}w(z)=\ell-1+\ell-1=2\ell-2 \ge \frac{5}{4}\ell=\frac{5}{4}|Z_i|.$ 

	From the above (i), (ii), (iii), and by Lemma~\ref{lem_existence_of_Vstar}, Eqs. (\ref{eG}) and (\ref{eq3}), we conclude that
	\begin{equation*} 
		\begin{split}
		e(G) = & ~e(V^{\star})+\sum_{z \in  V(G) \setminus V^{\star}}w(z)\ge  \frac{5}{4}|V^{\star}| - \frac{7}{4} +\sum_{z \in  V(G) \setminus V^{\star}}w(z)
		~~\text{(by Eq. (\ref{eG}) and Lemma \ref{lem_existence_of_Vstar})}\\
	\ge &~\frac{5}{4}|V^{\star}| - \frac{7}{4}+  
	  \sum_{i=1}^k\sum_{z \in Z_i}w(z)+ \frac{3}{2}(|V(G)|-  |V^{\star}|- |S_1|-|S_2|)~~\text{(by Eq. (\ref{eq3}))}\\
	  \ge &~\frac{5}{4}|V^{\star}| - \frac{7}{4}+  
	  \frac{5}{4}\sum_{i=1}^k|Z_i|+ \frac{5}{4}(|V(G)|-  |V^{\star}|- |S_1|-|S_2|)~~\text{(by (i), (ii), (iii))}\\ 
	  =&~\frac{5}{4}|V^{\star}| - \frac{7}{4} + \frac{5}{4} (n-|V^{\star}|)\\
	  =&~ \frac{5}{4}n - \frac{7}{4}.
	  \end{split}
	\end{equation*}

	Combining with Theorem \ref{thm_c4r}, we complete the proof of  Theorem~\ref{thm_sat_C456}.
	\end{proof}
  	
	\subsection{Proof of Lemma~\ref{lem_existence_of_Vstar}}
	We need to consider  two cases: $\delta(G) = 1$ or 2. In each case, we will show how to construct a set $V^\star$ satisfying Properties 1, 2 and $3^+$.			
	The following result will be  frequently used.
	
  	\begin{prop}\label{prop1}
    	If $x \notin \mathcal{Q}$ and $u$, $v$ are two isolated vertices in $G[N_G(x)]$, then $x,$ $u$ and $v$ lie on some induced $C_7$. 

  	\end{prop}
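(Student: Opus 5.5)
Since $u$ and $v$ are isolated in $G[N_G(x)]$, they are non-adjacent. By Fact 5 they are joined by a path of length $3$, $4$ or $5$; take a shortest such path $P$. The plan is to show that $P$ avoids $x$, that $P$ together with $x$ forms a $C_7$, and that this $C_7$ has no chords.

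\textbf{Where the path cannot be short.} Suppose $u$ and $v$ were joined by a path $P$ of length $\ell\in\{3,4,5\}$ that avoids $x$. Then $P$ together with $xu$ and $xv$ is a cycle of length $\ell+2\in\{5,6,7\}$. Since $G$ contains no $C_5$ or $C_6$, we get $\ell\notin\{3,4\}$. A path of length $3,4,5$ through $x$ must have the form $u\,x\,\dots\,v$ or $u\dots x\,v$, so it uses $x$ as an interior vertex.

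\textbf{Finding a path avoiding $x$.} We need a $u$–$v$ path of length $5$ that avoids $x$. The hypothesis $x\notin\mathcal{Q}$ should enter here. Consider adding the edge $uv$ directly: saturation gives a $u$–$v$ path $P$ of length $3$, $4$ or $5$.
\begin{itemize}[leftmargin=*]
\item If $x\notin P$, the previous paragraph forces length $5$, and we obtain the cycle $C=x\,u\,P\,v\,x$ of length $7$.
\item If $x\in P$, say $P=u\,x\,y\dots v$ with $y\neq v$ (up to symmetry), then $xy\in E$. Since $u$ and $v$ are isolated in $G[N_G(x)]$, the path continues from $y$ and avoids $N(x)$ except possibly at its final vertex. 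We then get a shorter cycle $x\,y\dots v\,x$ of length at least $3$, whose length equals $|P|-1\in\{2,3,4\}$.
\end{itemize}
Lengths $4$ and $5$ are forbidden, and length $2$ is impossible. So $|P|=4$ and $x\,y\,w\,v\,x$ would be a $4$-cycle, a contradiction; or the remaining case is $|P|=3$ with $P=u\,x\,y\,v$. In the latter case $xy$ and $xv$ are edges and $yv$ is an edge, so $v$ and $y$ are adjacent within $N(x)$. This contradicts the assumption that $v$ is isolated in $G[N_G(x)]$.

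Hence some saturating path avoids $x$, and it has length exactly $5$. (The hypothesis $x\notin\mathcal{Q}$ may in fact be needed only to guarantee $d(u),d(v)\ge1$ in a nondegenerate way, or to handle the case where $u$ is pendant. I will check whether it is used at all.)

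\textbf{Inducedness.} Let $C=x\,u\,a\,b\,c\,d\,v\,x$. Any chord of $C$ splits $C$ into two cycles whose lengths sum to $9$, so one of them has length in $\{4,5,6\}$, unless the split is $3+6$ or $\dots$; all possible splits ($3{+}6$, $4{+}5$) create a forbidden cycle. Therefore $C$ is induced. The main obstacle is carefully excluding the case where the only saturating paths pass through $x$; that is the step to verify first.
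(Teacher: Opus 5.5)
Your argument is correct in substance and follows the paper's route: the $u$--$v$ path of length $r\in\{3,4,5\}$ from Fact 5, closed through $x$, gives a cycle of length $r+2$, which forces $r=5$. You go further than the paper. Its three-line proof tacitly assumes the path avoids $x$ and asserts inducedness without comment. Your chord count is right: a chord of a $7$-cycle joins vertices at distance $2$ or $3$, producing either a $C_6$, or a $C_4$ and a $C_5$.

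The step showing that the path avoids $x$ is the weak point and needs two repairs.
\begin{enumerate}
\item You assert without proof that $x$ must be the second or penultimate vertex of $P$. Nothing you say rules out, e.g., $u\,a\,x\,b\,c\,v$.
\item When $P=u\,x\,y\cdots v$, the cycle $x\,y\cdots v\,x$ has length $|P|$, not $|P|-1$. Your case check (triangle for $|P|=3$, $C_4$ for $|P|=4$) actually uses the correct value, and $|P|=5$ gives a $C_5$.
\end{enumerate}
Both are handled at once. Split $P$ at $x$ into a $u$--$x$ piece of length $p$ and an $x$--$v$ piece of length $q$, with $p+q=|P|$. If $p\ge 2$, the first piece plus $xu$ is a cycle of length $p+1\in\{3,4,5\}$. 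Length $3$ makes the vertex before $x$ on $P$ a neighbour of $u$ inside $N_G(x)$, contradicting that $u$ is isolated in $G[N_G(x)]$; lengths $4$ and $5$ are forbidden. Symmetrically $q=1$, so $|P|=2$, a contradiction. Hence every $u$--$v$ path of length $3$, $4$ or $5$ avoids $x$.

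As for your parenthetical question, the hypothesis $x\notin\mathcal{Q}$ is not needed. By Fact 4, if $x\in\mathcal{Q}$ then $p_x$ is the only isolated vertex of $G[N_G(x)]$, so the hypothesis already follows from the others. Your argument also excludes a pendant $u$, since every $u$--$v$ path would then pass through $x$.
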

  	
  	\begin{proof}
  			By Fact 5, there is a path  of length $r\in \{3, 4, 5\}$ connecting $u$ and $v.$
  		Together with two edges $xu$ and $xv$, $G$ contains a cycle of length  $r+2.$ Since $G$ is $\mathcal{C}_{[4,6]}$-saturated, then $r =5.$
  		Hence, $x,$ $u$ and $v$ lie on some induced $C_7$. 
  	\end{proof}
  	
	For each vertex $x \in L^{i-1}$ ($i \in \{1,2,3,4,5\}$), let $L^i(x) = L^i \cap N_G(x)$.
	By Fact~2, every component of the induced subgraph $G[L^i(x)]$ is either a $K_2$ or a $K_1$. Define that
	\[
	L^i(x) = L^i_P(x) \cup L^i_F(x),\qquad
	L^i_P = \bigcup_{x\in L^{i-1}} L^i_P(x),\qquad
	L^i_F = \bigcup_{x\in L^{i-1}} L^i_F(x),
	\]
	where $L^i_P(x)$ (resp.\ $L^i_F(x)$) denotes the vertex set of the $K_2$ (resp.\ $K_1$) components of $G[L^i(x)]$.

	\subsubsection{$\delta(G) = 1$}
		In this case, we select any vertex $\alpha_0$ of degree 1. Suppose that $N_G(\alpha_0) =\{\alpha_1\}$, then $L^0 = \{\alpha_0\}$ and $L^1 = \{\alpha_1\}$. By Fact 4, we have	$L^2_{F}(\alpha_1) = \emptyset$.
	The following result shows that 	 any vertex not lying in $L^5$ has a unique neighbor in the preceding layer.

			\begin{prop}\label{prop3}
			If the vertex $x \in L^i$ and $|N_G(x) \cap L^{i-1}| \ge 2$, then $i = 5$.
		\end{prop}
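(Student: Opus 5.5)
The plan is a direct cycle argument along the BFS layers rooted at $\alpha_0$. The key point is that $L^0=\{\alpha_0\}$ and $L^1=\{\alpha_1\}$, so every vertex of $L^j$ with $j\ge 1$ has a geodesic to $\alpha_1$ of length exactly $j-1$.

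Suppose $x\in L^i$ has two distinct neighbors $y_1,y_2\in L^{i-1}$, and assume for contradiction that $i\le 4$.

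1. Since $|L^0|=1$, the case $i=1$ cannot occur. Since $|L^1|=1$, the case $i=2$ cannot occur either. So $i\in\{3,4\}$.

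2. For $t=1,2$, fix a shortest path $P_t$ from $y_t$ to $\alpha_1$. The path $P_t$ passes through exactly one vertex of each layer $L^{i-1},L^{i-2},\dots,L^1$.

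3. Let $w$ be the first vertex of $P_1$, starting from $y_1$, that also lies on $P_2$. Such a vertex exists because $\alpha_1$ lies on both paths. Suppose $w\in L^j$. Then $1\le j\le i-2$, because $y_1\ne y_2$ are the only vertices of $P_1$ and $P_2$ in $L^{i-1}$.

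4. The subpaths $y_1P_1w$ and $y_2P_2w$ both have length $i-1-j$. By the choice of $w$ they are internally disjoint, and neither contains $x$. Together with the edges $xy_1$ and $xy_2$, they form a cycle of length
\[
2(i-1-j)+2=2(i-j).
\]

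5. From $1\le j\le i-2$ we get $4\le 2(i-j)\le 2(i-1)\le 6$ when $i\le 4$. So $G$ contains a $C_4$ or a $C_6$, contradicting that $G$ is $\mathcal{C}_{[4,6]}$-free.

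6. Hence $i\ge 5$. By Fact 5 we have $L^i=\emptyset$ for $i\ge 6$, so $i=5$.

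The only step needing care is step 3: choosing $w$ as the first common vertex guarantees the two branches are internally disjoint and of equal length. Equal length follows because both are geodesics descending one layer per step. Everything else is routine.
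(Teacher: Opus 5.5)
Your proof is correct and is essentially the paper's argument. The paper treats the two cases separately:
\begin{itemize}
\item for $i=3$ it finds a $C_4$ through $\alpha_1$;
\item for $i=4$ it finds a $C_4$ if $y_1,y_2$ have a common neighbor in $L^2$, and otherwise a $C_6$ through $\alpha_1$.
\end{itemize}
These are exactly the cases $j=2$ and $j=1$ of your first-common-vertex cycle of length $2(i-j)$, so your geodesic formulation is a uniform packaging of the same cycle-closing idea.
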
	
		
		\begin{proof}
			It is trivial that $i \notin \{0,1,2\}$.  If $x \in L^3$ and $|N_G(x) \cap L^{2}| \ge 2,$ it is easy to see that there is a $C_4$ in $G$, a contradiction.
			If $x \in L^4$ and $|N_G(x) \cap L^{3}| \ge 2$, assume that 
			$\{y_1, y_2\} \subseteq N_G(x) \cap L^{3}$ and $y_i \in L^3(x_i)$ for $i \in \{1,2\},$
			then $xy_1x_1y_2x$ forms a $C_4$ (if  $x_1=x_2$) or $xy_1x_1\alpha_1x_2y_2x$ forms a $C_6$ (if $x_1\ne x_2$) in $G$, a contradiction. Hence, $i=5.$
		\end{proof}

		
		
		We now divide $L^2 \setminus \mathcal{Q}$ into 3 disjoint subsets $A_0$, $A_1$ and $A_{\ge 2}$, where $A_i = \{x \in L^2 \setminus \mathcal{Q}:|L^3_F(x)| = i\}$ for $i = 0$ or 1, and $A_{\ge 2} = L^2 \setminus (\mathcal{Q} \cup A_0 \cup A_1) = \{x \in L^2 \setminus \mathcal{Q}:|L^3_F(x)| \ge 2\}$. Furthermore, let $A_{\ge 2} =\{w_1, w_2, \dots, w_\ell\}$.  

		The procedure for constructing $V^{\star}$ is as follows:	
	
		\textbf{Step 1:} Initialization: $V_0 = L^0 \cup L^1$;
		
		\textbf{Step 2:} Add all vertices of $L^2$ and their pendant vertices (if any $v \in L^2 \cap \mathcal{Q}$) into $V_0$. The resulting set is denoted by $V_1$;
		
		\textbf{Step 3:} For each vertex $x \in L^2$, add all vertices of $L^3_P(x)$ and their pendant vertices (if any $v \in L^3_P(x) \cap \mathcal{Q}$) into $V_1$. The resulting set is denoted by $V_2$;
		 
		\textbf{Step 4:} For each vertex $x \in A_1$, add the unique vertex of $L^3_F(x)$ into $V_2$. The resulting set is denoted by $V_3$;
		
		\textbf{Step 5:} For each vertex $x \in A_{\ge 2}$, add arbitrarily one vertex of $L^3_F(x)$  into $V_3$. The resulting set is denoted by $V_4$;
		
		\textbf{Step 6:} Add all remaining vertices  of $L^3$ (which is a subset of  $\bigcup_{i=1}^\ell L^3_F(w_i)$) together with certain vertices from $L^4\cup L^5$  into $V_4$. The resulting set is denoted by $V_5$;
		
		\textbf{Step 7:} For each vertex $x \in L^3$, add all remaining vertices of $L^4_P(x)$
		and their pendant vertices  (if any $v \in L^4_P(x) \cap \mathcal{Q}$) into $V_5$. 
		 The final set is $V^{\star}$.
	
		After finishing Step 5, we have 
		\[
		|V_4| = |L^0| + |L^1| + |L^2| + |L^2 \cap \mathcal{Q}| + |L^3_P| + |L^3_P \cap \mathcal{Q}| + |A_1| + |A_{\ge 2}|,
		\]
		and
		\begin{equation}\label{eq4}
			\begin{split}
				e(V_4) =~  & e(L^0 \cup L^1 \cup L^2) + e(L^2, L^3_P) +e(L^3_P)+ |(L^2 \cup L^3) \cap \mathcal{Q}| + |A_1| + |A_{\ge 2}|\\
				= ~& 1 + \frac{3}{2}|L^2|  + \frac{3}{2}|L^3_P| + |L^2 \cap \mathcal{Q}|+ |L^3 \cap \mathcal{Q}| + |A_1| + |A_{\ge 2}|\\
				= ~ & \frac{5}{4}|V_4| - \frac{3}{2} + \frac{1}{4}(|L^2| - |L^2 \cap \mathcal{Q}| - |A_1| - |A_{\ge 2}|) + \frac{1}{4}|L^3_P| - \frac{1}{4}|L^3_P \cap \mathcal{Q}|\\
				= ~& \frac{5}{4}|V_4| -\frac{3}{2}+\frac{1}{4}(|A_0|+|L^3_P \setminus \mathcal{Q}|)\\
				\ge ~ & \frac{5}{4}|V_4| -\frac{3}{2}.
			\end{split}
		\end{equation}
	
		Observed that Step 6 is the most important step in the entire procedure. In the following we aim to construct a set $V_5$ satisfying Properties 1, 2 and 3. Before this, we need some results and definitions.
	
		Choose an arbitrary vertex $w_1 \in A_{\ge 2}$, suppose that  $L^3_F(w_1) = \{x_1, x_2, \dots , x_t\}$ ($t \ge 2$) and $x_1$ has already been added into $V_4$ in Step~5. By Proposition~\ref{prop1}, for two  non-adjacent vertices $x_i, x_{i+1}$ in 
$L^3_F(w_1)$, three vertices $w_1,$ $x_i$ and $x_{i+1}$ lie on some induced $C_7$. Choose any such $C_7$ and denote it by $C_7(x_ix_{i+1}) = w_1x_iy_iz_iz_{i+1}y_{i+1}x_{i+1}w_1$. Moreover, we denote $V(C_7^-(x_i x_{i+1}))=V(C_7(x_ix_{i+1})) \setminus \{w_1, x_i\}$ and $E(C_7^-(x_i x_{i+1}))=E(C_7(x_ix_{i+1})) \setminus \{w_1x_i\}$ respectively.	

		The following five lemmas provide some results concerning the vertices in $V(C_7(x_ix_{i+1}))$. For readability, their proofs are deferred to the appendix.
	
		\begin{lem}\label{lem_yi_in_L4}
			$\{y_i, y_{i+1}\} \subseteq L^4$.
		\end{lem}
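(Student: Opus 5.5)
The plan is to locate $y_i$ by distance from $\alpha_0$. Since $y_i$ is adjacent to $x_i\in L^3$, we have $y_i\in L^2\cup L^3\cup L^4$, so it suffices to exclude $L^2$ and $L^3$. The argument for $y_{i+1}$ is identical, with $x_{i+1}$ in place of $x_i$ (both lie in $L^3_F(w_1)$), so I only treat $y_i$.

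\emph{Excluding $L^2$.} Since $x_i\in L^3_F(w_1)\subseteq L^3(w_1)$, the vertex $w_1$ is a neighbor of $x_i$ in $L^2$. By Proposition~\ref{prop3}, $x_i$ has exactly one neighbor in $L^2$, because $3\neq 5$. Hence if $y_i\in L^2$, then $y_i=w_1$. This is impossible, since $y_i$ and $w_1$ are distinct vertices of the cycle $C_7(x_ix_{i+1})$.

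\emph{Excluding $L^3$.} Suppose $y_i\in L^3$, and let $u$ be its unique neighbor in $L^2$ (again by Proposition~\ref{prop3}).
\begin{itemize}
\item If $u=w_1$, then $w_1y_i$ is a chord of the induced cycle $C_7(x_ix_{i+1})$, a contradiction. Alternatively, $y_i\in L^3(w_1)$ is then adjacent to $x_i$, contradicting that $x_i$ is an isolated vertex of $G[L^3(w_1)]$.
\item If $u\neq w_1$, recall that $L^1=\{\alpha_1\}$, so every vertex of $L^2$ is adjacent to $\alpha_1$. Then $\alpha_1w_1x_iy_iu\alpha_1$ is a cycle of length $5$ in $G$. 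This contradicts that $G$ is $\mathcal{C}_{[4,6]}$-free.
\end{itemize}

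Therefore $y_i\in L^4$, and likewise $y_{i+1}\in L^4$. I do not expect a real obstacle here. The only point needing care is checking that the five vertices $\alpha_1,w_1,x_i,y_i,u$ are distinct, so that the closed walk is a genuine $C_5$. This holds because they lie in the layers $L^1,L^2,L^3,L^3,L^2$ respectively, with $u\ne w_1$ and $y_i\ne x_i$.
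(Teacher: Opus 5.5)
Your proof is correct and is essentially the paper's argument. You rule out $L^2$ with Proposition~\ref{prop3}. You then rule out $L^3$ in two steps: $y_i$'s neighbor in $L^2$ cannot be $w_1$, since $x_i$ is isolated in $G[L^3(w_1)]$, and any other choice closes a $C_5$ through $\alpha_1$ (your extra induced-cycle remark and the distinctness check are harmless additions).
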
 
	
		\begin{lem}\label{lem_yi_in_Q_implies_yi_in_L4p}
			If $y_i\in \mathcal{Q}$ $($resp. $y_{i+1}\in \mathcal{Q}),$  then $y_i \in L^4_P(x_i)$ $($resp. $y_{i+1} \in L^4_P(x_{i+1}))$ and there is a unique $y_i^\prime \in N_G(y_i) \cap L^4(x_i)$ $($resp.  $y_{i+1}^\prime \in N_G(y_{i+1}) \cap L^4(x_{i+1})).$
		\end{lem}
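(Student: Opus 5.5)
The plan is to locate the neighbors of $y_i$ using the layer structure around $\alpha_0$. Fact~4 applied at $y_i$, Fact~2 applied at $x_i$, and Proposition~\ref{prop3} should then pin down a unique common neighbor of $x_i$ and $y_i$ inside $L^4(x_i)$. The case of $y_{i+1}$ is symmetric, with $x_{i+1}$ in place of $x_i$, so I treat only $y_i$.

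\emph{Step 1: $x_i$ is the only neighbor of $y_i$ in $L^3$.} By Lemma~\ref{lem_yi_in_L4}, $y_i\in L^4$. Since $x_i\in L^3_F(w_1)\subseteq L^3$ and $x_iy_i$ is an edge of $C_7(x_ix_{i+1})$, $x_i$ is a neighbor of $y_i$ in $L^3$. Proposition~\ref{prop3} says a vertex of $L^4$ has a unique neighbor in $L^3$, so $N_G(y_i)\cap L^3=\{x_i\}$. Consequently $y_i\in L^4(x_i)$.

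\emph{Step 2: $x_i$ and $y_i$ have a common neighbor $y_i'$.} Suppose $y_i\in\mathcal{Q}$. Then $x_i\neq p_{y_i}$, because $x_i$ is adjacent to both $w_1$ and $y_i$ and so has degree at least $2$. By Fact~4, $G[N_G(y_i)\setminus\{p_{y_i}\}]$ is a perfect matching $tK_2$. Hence $x_i$ has a partner $y_i'$ in that matching, i.e.\ $y_i'$ is adjacent to both $x_i$ and $y_i$.

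\emph{Step 3: $y_i'\in L^4$.} Since $y_i'$ is adjacent to $x_i\in L^3$, it lies in $L^2\cup L^3\cup L^4$. I rule out the first two options.
\begin{itemize}
\item If $y_i'\in L^2$, then by Proposition~\ref{prop3} (applied to $x_i\in L^3$) we get $y_i'=w_1$. This is impossible, since $w_1\in L^2$ cannot be adjacent to $y_i\in L^4$.
\item If $y_i'\in L^3$, then $y_i'$ and $x_i$ are two distinct neighbors of $y_i$ in $L^3$, contradicting Step~1.
\end{itemize}
Thus $y_i'\in L^4\cap N_G(x_i)=L^4(x_i)$. So $y_i$ has the neighbor $y_i'$ inside $G[L^4(x_i)]$.

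\emph{Step 4: $y_i\in L^4_P(x_i)$ and $y_i'$ is unique.} By Fact~2, $G[N_G(x_i)]$ is a disjoint union of copies of $K_2$ and $K_1$. In particular, $y_i$ has at most one neighbor in $N_G(x_i)$; two such neighbors $a,b$ would create the $4$-cycle $x_iayb x_i$ with $y=y_i$. Therefore the component of $y_i$ in $G[L^4(x_i)]$ is exactly the edge $y_iy_i'$. This gives $y_i\in L^4_P(x_i)$, and $y_i'$ is the unique vertex of $N_G(y_i)\cap L^4(x_i)$.

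The only step needing care is Step~3, namely excluding $y_i'\in L^2\cup L^3$. The key point there is that Proposition~\ref{prop3} gives unique parents both for $x_i\in L^3$ and for $y_i\in L^4$.
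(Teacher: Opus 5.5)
Your proof is correct and follows the paper's argument. Fact~4 at $y_i$ produces the common neighbor $y_i'$ of $x_i$ and $y_i$, Proposition~\ref{prop3} excludes $y_i'\in L^3$, and Fact~2 at $x_i$ shows that $y_iy_i'$ is a $K_2$-component of $G[L^4(x_i)]$ with $y_i'$ unique. You also make explicit two points the paper leaves implicit, namely $x_i\neq p_{y_i}$ and $y_i'\notin L^2$; the latter follows even more directly from $y_i'\sim y_i\in L^4$, without the detour through $w_1$.
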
 
		
		\begin{lem}\label{lem_zi_in_L45}
			$\{z_i, z_{i+1}\} \subseteq L^4 \cup L^5$.
		\end{lem}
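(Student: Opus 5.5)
We need to show $z_i, z_{i+1} \in L^4 \cup L^5$. Work in the setting of the $\delta(G)=1$ case: $L^0=\{\alpha_0\}$, $L^1=\{\alpha_1\}$, $w_1\in L^2$, and $x_i,x_{i+1}\in L^3_F(w_1)$. By Lemma~\ref{lem_yi_in_L4}, $y_i,y_{i+1}\in L^4$. Since $z_i$ is adjacent to $y_i\in L^4$, we have $z_i\in L^3\cup L^4\cup L^5$, and similarly for $z_{i+1}$. So it suffices to rule out $z_i\in L^3$; the argument for $z_{i+1}$ is symmetric.

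Suppose $z_i\in L^3$. By Proposition~\ref{prop3}, $z_i$ has a unique neighbor $u\in L^2$, and $u$ has a unique neighbor in $L^1$, which must be $\alpha_1$. Likewise $w_1$ is adjacent to $\alpha_1$. We also know $y_i\in L^4$, and by Proposition~\ref{prop3} its unique neighbor in $L^3$ is $x_i$. But $z_i\in L^3$ is adjacent to $y_i$, so $y_i$ would have two neighbors $x_i, z_i$ in $L^3$. This contradicts Proposition~\ref{prop3}, since $y_i\in L^4$ and not $L^5$. Hence $z_i\notin L^3$.

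The only care needed is that $z_i\neq x_i$, which holds because $C_7(x_ix_{i+1})$ is a cycle on seven distinct vertices. We should also confirm that $x_i$ is genuinely a neighbor of $y_i$ lying in $L^3$; this is immediate from the cycle edge $x_iy_i$ and $x_i\in L^3$.

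The same reasoning applied to $y_{i+1}$, with $x_{i+1}$ and $z_{i+1}$, gives $z_{i+1}\notin L^3$. Therefore $\{z_i,z_{i+1}\}\subseteq L^4\cup L^5$.

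The main point to get right is the layer bookkeeping. Adjacent vertices differ in layer index by at most one, which is what places $z_i$ in $L^3\cup L^4\cup L^5$. Proposition~\ref{prop3} then forbids a second back-neighbor for any vertex in $L^4$. No fallback via short cycles, such as a $C_4$ or $C_6$ through $\alpha_1$, should be needed.
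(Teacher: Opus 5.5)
Your proof is correct and is the paper's argument: if $z_i\in L^3$, then $y_i\in L^4$ (Lemma~\ref{lem_yi_in_L4}) would have the two neighbors $x_i,z_i$ in $L^3$, contradicting Proposition~\ref{prop3}. Your explicit observation that $z_i\sim y_i\in L^4$ forces $z_i\in L^3\cup L^4\cup L^5$ spells out what the paper leaves implicit, and the remarks about $u\in L^2$ and $\alpha_1$ are unnecessary but harmless.
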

		
		\begin{lem}\label{lem_zi_in_Q_implies_zi_in_L4}
			If $z_i\in  \mathcal{Q}$ $($resp. $z_{i+1}\in \mathcal{Q})$, then $z_i \in L^4(x_i^\prime)$ $($resp. $z_{i+1} \in L^4(x_{i+1}^\prime))$, where $x_i^\prime, x_{i+1}^\prime \in L^3 \setminus \{x_i, x_{i+1}\}.$ 
		\end{lem}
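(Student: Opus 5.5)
The plan is to get the layer of $z_i$ from the pendant vertex of $z_i$ together with Fact~5. The identity of the $L^3$-neighbour of $z_i$ should then follow from Proposition~\ref{prop3} and the fact that $C_7(x_ix_{i+1})$ is induced. The statement for $z_{i+1}$ is symmetric, so I treat only $z_i$.

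\emph{Step 1: $z_i\in L^4$.} Suppose $z_i\in\mathcal{Q}$, and let $p=p_{z_i}$ be its pendant neighbour, so $N_G(p)=\{z_i\}$. By Lemma~\ref{lem_zi_in_L45}, $z_i\in L^4\cup L^5$.
\begin{itemize}
\item First, $p\neq\alpha_0$: the only neighbour of $\alpha_0$ is $\alpha_1\in L^1$, while $z_i\in L^4\cup L^5$.
\item Next, $p\alpha_0\notin E(G)$, since $p\neq\alpha_1$. Hence Fact~5 gives a path $P$ from $p$ to $\alpha_0$ of length at most $5$.
\item Since $d(p)=d(\alpha_0)=1$, this path has the form $P=p\,z_i\cdots\alpha_1\alpha_0$.
\end{itemize}
Therefore $\operatorname{dist}(z_i,\alpha_0)\le 4$, so $z_i\notin L^5$, and hence $z_i\in L^4$.

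\emph{Step 2: the $L^3$-neighbour of $z_i$.} Since $z_i\in L^4$, it has a neighbour $x_i'\in L^3$. By Proposition~\ref{prop3} this neighbour is unique, so $z_i\in L^4(x_i')$.

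\emph{Step 3: $x_i'\notin\{x_i,x_{i+1}\}$.} The vertices $x_i$ and $x_{i+1}$ lie on $C_7(x_ix_{i+1})=w_1x_iy_iz_iz_{i+1}y_{i+1}x_{i+1}w_1$, and this cycle is induced. Neither $x_i$ nor $x_{i+1}$ is a cycle-neighbour of $z_i$. So if $x_i'=x_i$ or $x_i'=x_{i+1}$, the edge $z_ix_i'$ would be a chord of the induced $C_7$, which is impossible. Hence $x_i'\in L^3\setminus\{x_i,x_{i+1}\}$.

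I do not expect a real obstacle here. The one point needing care is Step~1: Fact~5 must be applied to the pair $p,\alpha_0$ specifically. Because both of these vertices have degree $1$, the connecting path is forced to pass through $z_i$ and $\alpha_1$, and that is what bounds the layer of $z_i$.
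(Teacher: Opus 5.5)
Your proof is correct and follows the paper's proof: place $z_i$ in $L^4$, then use that $C_7(x_ix_{i+1})$ is induced to rule out $x_i'\in\{x_i,x_{i+1}\}$. The only difference is that the paper simply cites $L^5\cap\mathcal{Q}=\emptyset$ (a consequence of $L^6=\emptyset$), whereas you verify it directly by applying Fact~5 to the pair $p_{z_i},\alpha_0$.
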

		
		\begin{lem}\label{lem_distinct}
			Suppose  $y_i^\prime, y_{i+1}^\prime, x_i^\prime, x_{i+1}^\prime$ are the vertices mentioned in Lemmas \ref{lem_yi_in_Q_implies_yi_in_L4p} and \ref{lem_zi_in_Q_implies_zi_in_L4}. Except for the possible $x_i^\prime = x_{i+1}^\prime$, the vertices $x_i, x_{i+1}, y_i, y_{i+1}, z_i, z_{i+1}, x_i^\prime,$ $x_{i+1}^\prime, y_i^\prime, y_{i+1}^\prime$ are pairwise distinct.		
		\end{lem}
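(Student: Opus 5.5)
Since $C_7(x_ix_{i+1})=w_1x_iy_iz_iz_{i+1}y_{i+1}x_{i+1}w_1$ is a cycle, its seven listed vertices are already pairwise distinct. It therefore remains to show two things: the extra vertices $y_i',y_{i+1}',x_i',x_{i+1}'$ (whenever they exist) are distinct from each other and from the cycle vertices $x_i,x_{i+1},y_i,y_{i+1},z_i,z_{i+1}$, except that $x_i'=x_{i+1}'$ is allowed. My plan is to compare layers first, using Lemmas~\ref{lem_yi_in_L4}--\ref{lem_zi_in_Q_implies_zi_in_L4}. Where layers do not separate two vertices, I will use the fact that the graph has no $C_4,C_5,C_6$, the fact that $C_7(x_ix_{i+1})$ is induced, and Proposition~\ref{prop3}, which says every vertex of $L^4$ has a unique neighbor in $L^3$.

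\textbf{Step 1 (layers).} Here $x_i,x_{i+1},x_i',x_{i+1}'\in L^3$, while $y_i,y_{i+1},y_i',y_{i+1}'\in L^4$ (the last two because they lie in $L^4(x_i)$, $L^4(x_{i+1})$), and $z_i,z_{i+1}\in L^4\cup L^5$. Hence no $L^3$-vertex coincides with an $L^4$- or $L^5$-vertex. Lemma~\ref{lem_zi_in_Q_implies_zi_in_L4} gives $x_i',x_{i+1}'\notin\{x_i,x_{i+1}\}$.

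\textbf{Step 2 ($y'$ versus cycle vertices).} First, $y_i'\neq y_i$ since they are adjacent. Next, $y_i'\ne y_{i+1}$: the unique $L^3$-neighbor of $y_i'$ is $x_i$, while that of $y_{i+1}$ is $x_{i+1}$, and $x_i\ne x_{i+1}$ (Proposition~\ref{prop3}). The same argument gives $y_i'\ne y_{i+1}'$. Now suppose $y_i'=z_i$. Then $z_i$ is adjacent to $x_i$, which is a chord of the induced cycle (equivalently, $x_iy_iz_i$ is a triangle). This contradicts inducedness. Suppose instead $y_i'=z_{i+1}$. Then $x_iz_{i+1}$ is an edge, and $x_iz_{i+1}y_{i+1}x_{i+1}w_1x_i$ is a $C_5$, a contradiction. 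The symmetric statements for $y_{i+1}'$ follow in the same way.

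\textbf{Step 3 ($x'$ versus others).} All the remaining comparisons are already covered: $x_i'$ lies in $L^3$, so it differs from every $y,y',z$; and $x_i'\ne x_i,x_{i+1}$ by Lemma~\ref{lem_zi_in_Q_implies_zi_in_L4}. The pair $x_i',x_{i+1}'$ is the allowed exception.

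I expect the main obstacle to be Step 2. There I must make sure every coincidence yields either a chord of $C_7(x_ix_{i+1})$ or a short cycle of length $4$–$6$. In particular, I need to confirm that $x_i\ne x_{i+1}$ is guaranteed and that Proposition~\ref{prop3} applies to the $L^4$ vertices involved. If a case fails to close this way, I would instead check whether $z_i\in L^5$ (then $z_i\neq y'$ trivially) or $z_i\in L^4$ with a different $L^3$-parent.
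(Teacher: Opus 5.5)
Your proof is correct and is essentially the paper's argument made explicit. The paper only asserts that any coincidence produces a $C_4$, $C_5$ or $C_6$; you first dispose of all cross-layer pairs via the BFS layers, then handle the remaining $y'$ coincidences through the inducedness of $C_7(x_ix_{i+1})$ and the unique $L^3$-parent given by Proposition~\ref{prop3}, and each of these steps is itself a short-cycle argument (e.g.\ $y_i'=z_i$ gives the $C_6$ $w_1x_iz_iz_{i+1}y_{i+1}x_{i+1}w_1$). The worries in your last paragraph are unfounded, so no fallback is needed: $x_i\ne x_{i+1}$ holds simply because they are distinct vertices of $L^3_F(w_1)$ on the cycle, and Proposition~\ref{prop3} applies to every vertex of $L^4$.
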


		By considering the relationship between $\{z_i, z_{i+1}, y_i, y_{i+1}\}$ and $\mathcal{Q}$ together with Lemma \ref{lem_distinct}, there are 20 cases in total, which are summarized in the Table \ref{table_classification}, and their corresponding local structures are shown in Figures \ref{figcases1143} and \ref{figcases4473}.
		\begin{figure}[htbp]
			\centering
			\includegraphics[scale=0.85]{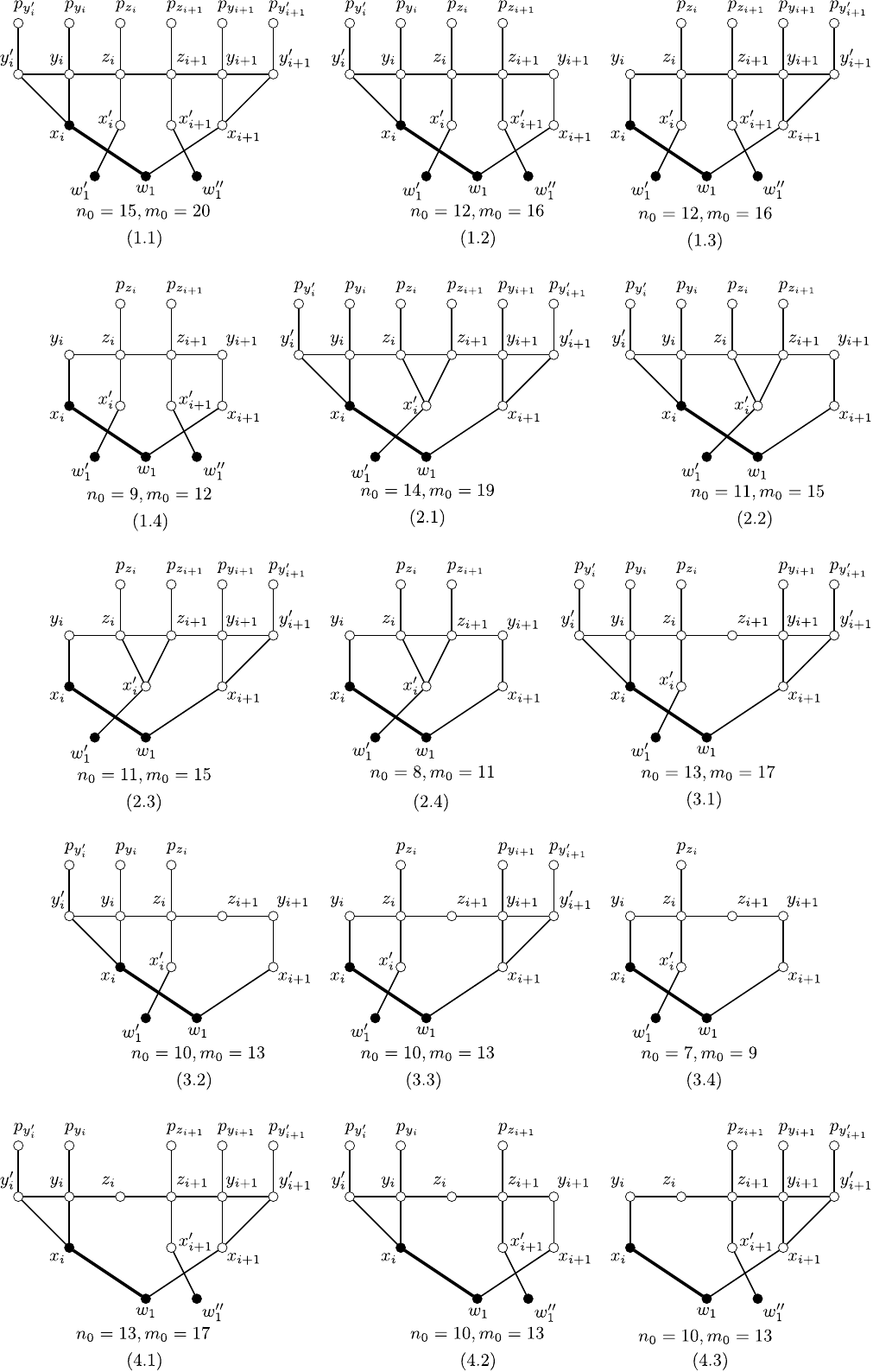}
			\caption{Local structures for Cases 1.1 -- 4.3}
			\label{figcases1143}
		\end{figure}
		\begin{figure}[htbp]
			\centering
			\includegraphics[scale=0.85]{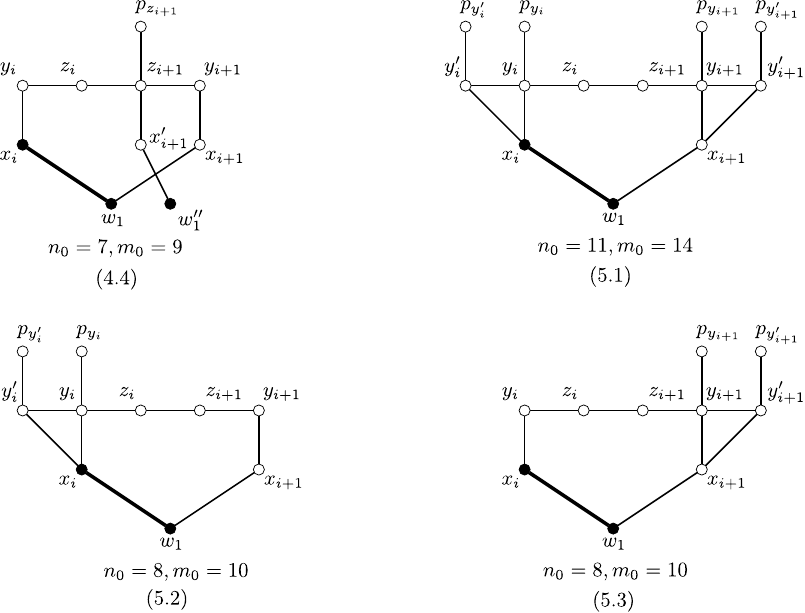}
			\caption{Local structures for  Cases 4.4 -- 5.3}
			\label{figcases4473}
		\end{figure}

		\begin{remark}
			The vertices $y_i^\prime$ and $y_{i+1}^\prime$ in  Figures~\ref{figcases1143} and \ref{figcases4473}  do not necessarily lie in $\mathcal{Q}$; hence, the vertices $p_{y_i^\prime}$ and $p_{y_{i+1}^\prime}$ need not exist. Such vertices $p_{y_i^\prime}$ and $p_{y_{i+1}^\prime}$ are called ``removable vertices''. Moreover, the vertices  $p_{u_4^\prime},$ $p_{v^\prime}$ and $p_{v_2}$ in Figure~\ref{figcases741745} are also removable vertices.		
		\end{remark}

		In each graph of all remaining figures, we adopt the following:
		\begin{enumerate}[itemsep=0pt, parsep=0pt]
			\item Black vertices and thick edges: already added in previous steps;
			 \item  White vertices and thin edges: undetermined;
			\item A thin edge is already added if and only if both its endpoints are already added.
		\end{enumerate}

		\begin{table}
			\caption{Classification of the 20 cases.}
			\label{table_classification}
			\centering
			\fontsize{10}{11}\selectfont
			\begin{tabular}
				{@{}ccccccc@{}}
				\toprule 
				\textbf{ }& $\{y_{i}, y_{i+1}\} \subseteq \mathcal{Q}$&only $y_i \in \mathcal{Q}$&only $y_{i+1} \in \mathcal{Q}$&$\{y_{i}, y_{i+1}\} \subseteq V \setminus \mathcal{Q}$ \\ 
				
				$\{z_{i}, z_{i+1}\} \subseteq \mathcal{Q} \text{ and } x_{i+1}^\prime \ne x_i^\prime$ & Case 1.1 & Case 1.2 & Case 1.3 & Case 1.4\\
				
				$\{z_{i}, z_{i+1}\} \subseteq \mathcal{Q}$ and $x_{i+1}^\prime = x_i^\prime$ & Case 2.1 & Case 2.2 & Case 2.3 & Case 2.4\\ 
				
				only $z_i \in \mathcal{Q}$ & Case 3.1 & Case 3.2 & Case 3.3 & Case 3.4\\
				
				only $z_{i+1} \in \mathcal{Q}$ & Case 4.1 & Case 4.2 & Case 4.3 & Case 4.4\\
				
				$\{z_{i}, z_{i+1}\} \subseteq V \setminus \mathcal{Q}$ & Case 5.1 & Case 5.2 & Case 5.3 & Case 5.4\\
				\hline
			\end{tabular}
		\end{table}
 
		Next, we  need to define some  vertex sets and edge sets: $U_i^\ast$,  $E_i^\ast,$  $U_i$,  $E_i,$  $U_i^-$ and $E_i^-$.

		\begin{myDef}
			For any graph in Figures~\ref{figcases1143}, \ref{figcases4473} and \ref{figcases741745}, define  $U_i^\ast$ as the set of all white vertices and $E_i^\ast$ as the set of all thin edges. Denote $n_0=|U_i^\ast|$ and $m_0=|E_i^\ast|$. See Tables \ref{table_results_Cases1173} and \ref{table_results_case743}.
		\end{myDef}
		Note that the removable vertices $p_{y_i^\prime},$ $p_{y_{i+1}^\prime},$ $p_{u_4^\prime}$, $p_{v^\prime}$ and $p_{v_2}$ may not exist.
		
		\begin{myDef}\label{def3}
Let $\mathcal{S}$ be the set of vertices $x$ in the collection
$\{y_i', y_{i+1}', v', u_4', v_2\}$ such that $x$ exists and $x \notin \mathcal{Q}$.
For each $x \in \mathcal{S}$, the vertex $p_x$ is   removable.
We now define
\[
U_i = U_i^* \setminus \{ p_x : x \in \mathcal{S} \}, \qquad
E_i = E_i^* \setminus \{ x p_x : x \in \mathcal{S} \}.
\]
\end{myDef}

	Since $U_i\subseteq U_i^\ast,$ then all  vertices in $U_i$ also have an undetermined status.
			
		\begin{myDef}\label{def2}
			Define $U_i^- \subseteq U_i$ and $E_i^- \subseteq E_i$ such that every vertex in $U_i^-$ and every edge in $E_i^-$ have already been added in previous steps, respectively.
		\end{myDef}

		Based on the above, we proceed to Step 6, which will be divided into two substeps.
		
		At the beginning, we set $V_4^1=V_4.$	

		\textbf{Step 6.1.}  Add all vertices of $U_i^+$  (see Remark \ref{r2})  into $V_4^{i}$ for $i=1,\dots,t-1.$ 
	 	The resulting set is denoted by $V_4^{i+1}=V_4^{i} \cup U_i^+$ (the last set $V_4^t$ contains all vertices of $L^3_F(w_1)$);
		
		\textbf{Step 6.2.} For any $w_j\in A_{\ge 2}$, where $j\ge 2$, add all remaining vertices of $L^3_F(w_j)$ together with certain vertices from $L^4\cup L^5$  into $V_4^t$. The resulting set is  $V_5$.
	
		In Step 6.1, we define the set of newly added edges as $E_i^+$ for $i=1,\dots,t-1,$ and then $E_i^+= E(U_i^+) \cup E(U_i^+, V_4^i).$ With this notation, we  have
	 	$e(V_4^{i+1}) \ge e(V_4^i) + |E_i^+|$. 
	
		\begin{remark}\label{r2}
			We shall explain the vertex set $U_i^+$ in Step 6.1: in  most cases, the vertex set $U_i^+=U_i\setminus U_i^-$, and then  $E_i^+=E_i\setminus E_i^-$. A precise explanation of $U_i^+$ will be given in the corresponding case.
		\end{remark}	
			
		Let $\rho_i=\frac{|E_i^+|}{|U_i^+|}$. The following lemma is useful and can be obtained by a simple calculation.	
		\begin{lem}\label{lemma3.7}
			If $e(V_4^{i})\ge \frac{5}{4} |V_4^{i}|-\ell$ and $\rho_i\ge \frac{5}{4}$ for $i\in \{ 1,\dots,t-1\},$ then $e(V_4^{i+1})\ge \frac{5}{4} |V_4^{i+1}|-\ell.$
		\end{lem}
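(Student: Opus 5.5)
The argument is a one-line inductive step: add the two inequalities. Set $V_4^{i+1}=V_4^i\cup U_i^+$, where $U_i^+$ is disjoint from $V_4^i$. This disjointness is built into the definition, since $U_i^+$ consists of undetermined (white) vertices with the already-added ones $U_i^-$ removed. Hence
\[
|V_4^{i+1}| = |V_4^i| + |U_i^+|.
\]

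From the remark preceding the lemma, the newly added edges $E_i^+=E(U_i^+)\cup E(U_i^+,V_4^i)$ are edges of $G[V_4^{i+1}]$ that are not in $G[V_4^i]$. Therefore
\[
e(V_4^{i+1}) \ge e(V_4^i)+|E_i^+|.
\]

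The hypothesis $\rho_i\ge \frac54$ means $|E_i^+|\ge \frac54|U_i^+|$. This presumes $U_i^+\neq\emptyset$; if $U_i^+$ were empty, the statement is trivial because $V_4^{i+1}=V_4^i$. Combining this with $e(V_4^i)\ge \frac54|V_4^i|-\ell$ gives
\[
e(V_4^{i+1}) \ge \tfrac54|V_4^i|-\ell+\tfrac54|U_i^+| = \tfrac54|V_4^{i+1}|-\ell .
\]

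The only point needing care, and it is minor, is justifying the two bookkeeping facts: disjointness of $U_i^+$ from $V_4^i$, and that edges in $E_i^+$ are not double counted with $E(V_4^i)$. Both follow because every edge of $E_i^+$ has at least one endpoint in $U_i^+$, which lies outside $V_4^i$. Iterating over $i=1,\dots,t-1$ then propagates the bound from $V_4^1=V_4$, using (\ref{eq4}) with $\ell=\frac32$, or any weaker constant such as $\frac74$.
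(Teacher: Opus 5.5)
Your argument is correct and is exactly the ``simple calculation'' the paper alludes to: since $U_i^+$ is disjoint from $V_4^i$, you get $e(V_4^{i+1})\ge e(V_4^i)+|E_i^+|\ge \frac{5}{4}|V_4^i|-\ell+\frac{5}{4}|U_i^+|=\frac{5}{4}|V_4^{i+1}|-\ell$. One caution on your closing aside, which is not part of the lemma: iterating from $\ell=\frac{3}{2}$ only works through steps with $\rho_i\ge\frac{5}{4}$, and Part 5 of Step 6.1 has $\rho_i=\frac{6}{5}$, which is exactly where the paper gives up $\frac{1}{4}$ and passes to the constant $\frac{7}{4}$.
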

		\begin{remark}\label{remark3}
			In our entire procedure, if the vertex $v\in \mathcal{Q}$ has already been added in a previous step, then its pendant vertex $p_v$ must certainly have been added as well.
		\end{remark}

	\paragraph{Step 6.1} 	
		We now proceed by induction on $t$ to prove that $e(V_4^{t})\ge \frac{5}{4} |V_4^{t}|-\frac{7}{4}$. For $t=1$, then $V_4^{1}=V_4$ and result holds by Eq. (\ref{eq4}).
		By the inductive hypothesis, assume that $e(V_4^{i})\ge \frac{5}{4} |V_4^{i}|-\frac{7}{4}$
		holds for $i= 1,\dots,t-2$. Next, we are going to construct suitable $U_i^+$ and $E_i^+$ such that $e(V_4^{i+1})\ge \frac{5}{4} |V_4^{i+1}|-\frac{7}{4}$. 
		In particular, if $x_{i+1} \in V_4^i$, we set $U_i^+ = E_i^+ = \emptyset$; then $V_4^{i+1} = V_4^i$, and the result follows by induction. Henceforth, we assume $x_{i+1} \notin V_4^i$, in which case $U_i^+$ contains the vertex $x_{i+1}$. Recall that $G+x_i x_{i+1}$ contains a $C_7(x_i x_{i+1})=w_1 x_i y_i z_i z_{i+1} y_{i+1} x_{i+1} w_1,$ and there are 20 cases concerning the relationship between $\{z_i, z_{i+1}, y_i, y_{i+1}\}$ and $\mathcal{Q}$ (see Table \ref{table_classification}). 
		We divide these $20$ cases into the following five parts:
\begin{itemize}[itemsep=0pt, parsep=0pt,leftmargin=3.5em]
\item[Part 1:] Cases 1.1 -- 5.3;
\item[Part 2:] Case 5.4 with $V(C_7^-(x_ix_{i+1})) \cap V_4^i \ne \emptyset$;
\item[Part 3:] Case 5.4 with $V(C_7^-(x_ix_{i+1})) \cap V_4^i = \emptyset$ and $\{z_i, z_{i+1}\} \cap L^4 \ne \emptyset$;
\item[Part 4:] Case 5.4 with $V(C_7^-(x_ix_{i+1})) \cap V_4^i = \emptyset$, $\{z_i, z_{i+1}\} \subseteq L^5$, and there exists a $C_7(x_{i_0}x_{i_0+1})$ for one $i_0 < i$ such that $\{z_{i_0}, z_{i_0+1}\} \subseteq L^5$;
\item[Part 5:] Case 5.4 with $V(C_7^-(x_ix_{i+1})) \cap V_4^i = \emptyset$, $\{z_i, z_{i+1}\} \subseteq L^5$, and for every $i_0 < i$, no $C_7(x_{i_0}x_{i_0+1})$ satisfies $\{z_{i_0}, z_{i_0+1}\} \subseteq L^5$.
\end{itemize}

		For Parts 1--4, we prove that there are suitable $U_i^+$ and $E_i^+$ such that $\rho_i \ge \frac{5}{4}$ (see Lemmas \ref{5.3.1} and \ref{B}). Thus, by the induction hypothesis and Lemma~\ref{lemma3.7}, we obtain $e(V_4^{i+1})\ge \frac{5}{4}|V_4^{i+1}| - \frac{7}{4}$.

		Firstly, we consider the Part 2 and Part 3.
		\begin{lem}\label{5.3.1}
			For Part 2 and Part 3, there are suitable $U_i^+$ and $E_i^+$ such that $\rho_i \ge \frac{5}{4}$.
		\end{lem}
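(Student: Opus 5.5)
In Case 5.4 none of $y_i,y_{i+1},z_i,z_{i+1}$ lies in $\mathcal{Q}$, so no pendant vertices from the cycle are forced. I would therefore take $U_i^+$ to be a subset of $V(C_7^-(x_ix_{i+1}))$ that always contains $x_{i+1}$, and $E_i^+$ to be the edges of $C_7(x_ix_{i+1})$ with both ends in $V_4^{i}\cup U_i^+$ and at least one end in $U_i^+$. The goal is to pick $U_i^+$ so that the added cycle path, together with the edge $w_1x_{i+1}$, gives $|E_i^+|\ge\frac54|U_i^+|$.

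\textbf{Part 2.} Here $V(C_7^-(x_ix_{i+1}))\cap V_4^i\neq\emptyset$. The path $x_iy_iz_iz_{i+1}y_{i+1}x_{i+1}$ runs from $x_i\in V_4^i$ to $x_{i+1}\notin V_4^i$, and $w_1\in V_4^i$. Let $a$ be the vertex of $\{y_i,z_i,z_{i+1},y_{i+1}\}$ that lies in $V_4^i$ and is closest to $x_{i+1}$ along the path. Set $U_i^+$ to be the vertices strictly between $a$ and $x_{i+1}$, together with $x_{i+1}$; write $s=|U_i^+|$, so $s\le 4$.
\begin{itemize}
\item The edges of the subpath from $a$ to $x_{i+1}$ contribute $s$ new edges.
\item The edge $w_1x_{i+1}$ contributes one more.
\end{itemize}
Hence $|E_i^+|\ge s+1\ge\frac54 s$ for $s\le4$, and $\rho_i\ge\frac54$. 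If vertices of the cycle path lying between $x_i$ and $a$ are not yet in $V_4^i$, they are simply not added; by Property 3 and Remark~\ref{remark3}, nothing in $U_i^+$ forces a pendant vertex, since Case 5.4 has no $\mathcal{Q}$-vertices on the cycle.

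\textbf{Part 3.} Here the cycle minus $\{w_1,x_i\}$ is disjoint from $V_4^i$, and some $z_j\in L^4$. I take $U_i^+=V(C_7^-(x_ix_{i+1}))$, which has 5 vertices. The six cycle edges $x_iy_i,\dots,y_{i+1}x_{i+1},x_{i+1}w_1$ give only $6<\frac{25}{4}$, so one extra edge is needed.
\begin{itemize}
\item Say $z_i\in L^4$. By Proposition~\ref{prop3}, $z_i$ has a unique neighbour $x'\in L^3$, and $x'\ne y_i$ since $y_i\in L^4$ by Lemma~\ref{lem_yi_in_L4}.
\item If $x'\in V_4^i$, the edge $z_ix'$ is a seventh edge, so $|E_i^+|\ge 7\ge\frac{25}{4}$.
\item Otherwise $x'$ lies in some $L^3_F(w_j)$ not yet added. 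Then I enlarge $U_i^+$ by $x'$ and its edge to its $L^2$-parent, which must already be in $V_4^i$. This gives $|U_i^+|=6$ and $|E_i^+|\ge 8\ge\frac{15}{2}$.
\end{itemize}
One must check that the new edge $x'z_i$ or the parent edge is genuinely distinct from the cycle edges and not double-counted; the distinctness argument of Lemma~\ref{lem_distinct} handles this.

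The main obstacle is Part 2 when the vertex $a$ is $y_i$ itself or a vertex adjacent to $x_i$, and certain cycle vertices are already present while others are not. One must verify that $x_{i+1}\notin V_4^i$ really makes every vertex between $a$ and $x_{i+1}$ new; otherwise the chosen $a$ was not the closest one. One must also rule out a chord giving a shorter cycle, which would contradict that the $C_7$ is induced.
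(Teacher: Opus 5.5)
Your proposal is correct and follows the paper's proof. Part 3 is exactly the paper's construction: add $V(C_7^-(x_ix_{i+1}))$ together with the edge from the $L^4$-vertex among $z_i,z_{i+1}$ to its $L^3$-neighbour; if that neighbour is not yet in $V_4^i$, also add the neighbour and its edge to $L^2$, giving $\rho_i=\frac{7}{5}$ or $\frac{8}{6}$. The only difference is bookkeeping in Part 2: the paper adds all of $V(C_7^-(x_ix_{i+1}))\setminus V_4^i$ and uses $\rho_i\ge\frac{6-k}{5-k}\ge\frac54$, where $k\ge 1$ is the number of cycle vertices already present, while you add only the segment after the last present vertex and get $\rho_i\ge\frac{s+1}{s}$ with $s\le 4$, which is equally valid and slightly simpler.
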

		\begin{proof}
			For Part 2, we set 
			\[
			U_i^+= V(C_7^-(x_ix_{i+1})) \setminus V_4^i~~ \text{and}~~E_i^+ = E(C_7^-(x_ix_{i+1}))\setminus E_i^-.
			\]
			It is not hard to see that  $\rho_i \ge \frac{5}{4}$.
			
			For Part 3, without loss of generality, assume that $z_{i+1} \in L^4(x_{i+1}^\prime)$ and $x_{i+1}^\prime \in L^3(w_1^\prime)$, see Figure \ref{figcase74}, we set
			\[
			\begin{cases}
				U_i^+ =V(C_7^-(x_ix_{i+1})) \cup \{x_{i+1}^\prime\}~\text{and}~E_i^+ =E(C_7^-(x_ix_{i+1})) \cup \{z_{i+1}x_{i+1}^\prime, x_{i+1}^\prime w_1^\prime\}, &\text{if}~  x_{i+1}^\prime \notin V_4^i;\\
				U_i^+ =V(C_7^-(x_ix_{i+1})) ~\text{and}~E_i^+ =E(C_7^-(x_ix_{i+1})) \cup \{z_{i+1}x_{i+1}^\prime\}, &\text{if}~  x_{i+1}^\prime \in V_4^i.
			\end{cases}
			\]
			Then $\rho_i = \frac{8}{6}$ or  $\frac{7}{5}$, which implies that $\rho_i > \frac{5}{4}$.
		\end{proof}
	\begin{figure}
			\centering
			\includegraphics[scale=0.8]{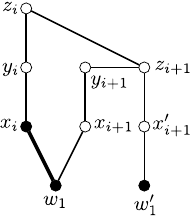}
			\caption{Case 5.4 with $\{z_i, z_{i+1}\} \cap L^4 \ne \emptyset$}
			\label{figcase74}
		\end{figure}
		Secondly, we  consider the Part 1 and Part 4.
		
		For Part 4, there exists a $C_7(x_{i_0}x_{i_0+1})$ for $i_0< i$ such that $\{z_{i_0}, z_{{i_0}+1}\} \subseteq L^5$. See Figure \ref{figcase743_c70}. We continue by classifying Part 4 with $z_{i_0} \not\sim z_{i+1}$. If $z_{i_0} \not\sim z_{i+1}$, by Fact 5, then there exists a path $P_{z_{i_0}z_{i+1}}$ of length 3, 4 or 5 connecting $z_{i_0}$ and $z_{i+1}$. Then we have 	
		\begin{claim}\label{c2}
			There exists a vertex $v$ in $V(P_{z_{i_0}z_{i+1}})\setminus (V(C_7^-(x_ix_{i+1}))\cup \{z_{i_0},z_{i+1}\})$
		 adjacent to some vertices of $\{x_{i+1}, y_i, y_{i+1}, z_i, z_{i+1}\}.$
		\end{claim}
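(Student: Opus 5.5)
The plan is to walk along the path $P=P_{z_{i_0}z_{i+1}}$ starting from $z_{i+1}$ and take $v$ to be the first vertex of $P$ that lies outside $V(C_7^-(x_ix_{i+1}))\cup\{z_{i_0}\}$. Its predecessor on $P$ then lies in $V(C_7^-(x_ix_{i+1}))=\{x_{i+1},y_{i+1},z_{i+1},z_i,y_i\}$, so $v$ is adjacent to one of $x_{i+1},y_i,y_{i+1},z_i,z_{i+1}$, as required. Everything therefore reduces to showing that such a $v$ exists. Equivalently, we must rule out that all internal vertices of $P$ lie in $V(C_7^-(x_ix_{i+1}))$.

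First, $z_{i_0}\notin V(C_7^-(x_ix_{i+1}))$. Indeed, $z_{i_0}$ lies on $C_7(x_{i_0}x_{i_0+1})$ with $i_0<i$, so it was already placed in some $V_4^{i_0+1}\subseteq V_4^i$ (as in Part 2/3-type additions). Meanwhile Part 4 assumes $V(C_7^-(x_ix_{i+1}))\cap V_4^i=\emptyset$. The same reasoning shows that $y_{i_0}$, $x_{i_0}$ and $x_i$ lie in $V_4^i$ and hence are distinct from $y_i,z_i,y_{i+1},z_{i+1}$.

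Suppose, for contradiction, that every internal vertex of $P$ lies in $V(C_7^-(x_ix_{i+1}))$. Since $C_7(x_ix_{i+1})$ is induced, $G[V(C_7^-(x_ix_{i+1}))]$ is exactly the path $x_{i+1}y_{i+1}z_{i+1}z_iy_i$. Hence $P$ runs along this path from $z_{i+1}$ and then jumps to $z_{i_0}$ from some vertex $c$. We go through the possibilities for $c$:
\begin{itemize}
\item $c=x_{i+1}$ is impossible, since $x_{i+1}\in L^3$ and $z_{i_0}\in L^5$.
\item $c=y_{i+1}$ or $c=z_i$ makes $P$ equal to $z_{i+1}y_{i+1}z_{i_0}$ or $z_{i+1}z_iz_{i_0}$. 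Either way $P$ has length $2<3$, contradicting Fact 5.
\end{itemize}

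The remaining case, $c=y_i$ (so $P=z_{i+1}z_iy_iz_{i_0}$), is the main obstacle, and here I would exhibit a forbidden short cycle through $w_1$.
\begin{itemize}
\item If $x_i\ne x_{i_0}$, the cycle $y_ix_iw_1x_{i_0}y_{i_0}z_{i_0}y_i$ has length $6$. Its vertices are pairwise distinct: $y_i\notin V_4^i$, while the vertices of $C_7(x_{i_0}x_{i_0+1})$ other than $w_1$ differ from $x_i$ because they lie in $L^4\cup L^5$ (or are distinct vertices of $L^3_F(w_1)$). This contradicts $\mathcal{C}_{[4,6]}$-freeness.
\item If $x_i=x_{i_0}$, then $y_ix_iy_{i_0}z_{i_0}y_i$ is a $C_4$, because $y_i\ne y_{i_0}$. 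This is again a contradiction.
\end{itemize}
Hence $P$ must leave $V(C_7^-(x_ix_{i+1}))$ before reaching $z_{i_0}$, and the first vertex where it does so is the desired $v$.

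If some distinctness check fails in the $c=y_i$ case, for instance if $x_i$ coincides with $x_{i_0+1}$, I would instead route through the other half of $C_7(x_{i_0}x_{i_0+1})$, namely via $x_{i_0}$. This keeps the cycle length at most $6$.
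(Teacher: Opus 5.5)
\textbf{Verdict: there is a genuine gap.} Your $v$ may be $x_i$ itself, and ruling that out requires a second step that the paper's proof contains and yours does not.

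\textbf{What matches the paper.} Your treatment of the case where every interior vertex of $P=P_{z_{i_0}z_{i+1}}$ lies in $V(C_7^-(x_ix_{i+1}))$ is the paper's first step. The only surviving configuration is $P=z_{i+1}z_iy_iz_{i_0}$, and $y_ix_iw_1x_{i_0}y_{i_0}z_{i_0}y_i$ is then a $C_6$. Some small corrections:
\begin{itemize}
\item $x_i=x_{i_0}$ cannot occur, since $i_0<i$ and $x_1,\dots,x_t$ are distinct.
\item $x_{i_0+1}$ never enters that cycle, so your last paragraph is moot.
\item Distinctness is better justified by layers and Proposition~\ref{prop3} than by membership in $V_4^i$. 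For example, $y_i\neq y_{i_0}$ because an $L^4$-vertex has a unique $L^3$-neighbour.
\item A length-$2$ path contradicts the choice of $P$ with length in $\{3,4,5\}$, not Fact 5 itself.
\end{itemize}

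\textbf{The gap: your $v$ may be $x_i$.} You take $v$ to be the first vertex of $P$ outside $V(C_7^-(x_ix_{i+1}))\cup\{z_{i_0}\}$. Since $x_i\notin V(C_7^-(x_ix_{i+1}))$, this $v$ is $x_i$ whenever $P$ begins $z_{i+1}z_iy_ix_i$. That satisfies the wording only degenerately. The vertex $x_i$ is deliberately left out of the target set $\{x_{i+1},y_i,y_{i+1},z_i,z_{i+1}\}$. Moreover, the claim feeds the split into Cases 1--5, which assumes either $v\in L^4\cup L^5$ or $v\sim x_{i+1}$ (and the latter forces $v\in L^4$). But $x_i\in L^3$, so none of those cases apply.

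\textbf{The missing step.} Take $v$ to be the first vertex of $P$ outside $V(C_7(x_ix_{i+1}))\cup\{z_{i_0}\}$. Note that $w_1\in L^2$ cannot lie on a path of length at most $5$ between two $L^5$-vertices. The paper's version is that if $v$'s only neighbour among these cycle vertices is $x_i$, you get a contradiction. In fact it suffices that $v$'s predecessor on $P$ is $x_i$:
\begin{itemize}
\item Then $P=z_{i+1}z_iy_ix_ivz_{i_0}$.
\item $v$ is adjacent to both $x_i\in L^3$ and $z_{i_0}\in L^5$, so $v\in L^4(x_i)$.
\item Hence $v\neq y_{i_0}$, again by Proposition~\ref{prop3}.
\item So $z_{i_0}vx_iw_1x_{i_0}y_{i_0}z_{i_0}$ is a $C_6$, a contradiction.
\end{itemize}
Therefore the predecessor of $v$ lies in $\{x_{i+1},y_i,y_{i+1},z_i,z_{i+1}\}$ and $v\neq x_i$, which is the form the subsequent case analysis needs. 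Adding this case makes your argument complete.
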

		\begin{proof}
			If $V(P_{z_{i_0}z_{i+1}})\setminus \{z_{i_0},z_{i+1}\}\subseteq V(C_7^-(x_ix_{i+1}))$, then $z_{i_0}\sim y_i,$ but 
			$z_{i_0}y_ix_iw_1x_{i_0}y_{i_0}z_{i_0}$ forms a $C_6$  in $G,$ a contradiction. It follows that $V(P_{z_{i_0}z_{i+1}})\setminus (V(C_7^-(x_ix_{i+1}))\cup \{z_{i_0},z_{i+1}\})\ne \emptyset$ and there exists a vertex $v$ in $V(P_{z_{i_0}z_{i+1}})\setminus (V(C_7^-(x_ix_{i+1}))\cup \{z_{i_0},z_{i+1}\})$  adjacent to some vertices of $\{x_i,x_{i+1}, y_i, y_{i+1}, z_i, z_{i+1}\}$. If $v$ is only adjacent to $x_i,$ then the path $P_{z_{i_0}z_{i+1}}=z_{i_0}vx_iy_iz_iz_{i+1},$ 
			but 
			$z_{i_0}vx_iw_1x_{i_0}y_{i_0}z_{i_0}$ forms a $C_6$  in $G,$ a contradiction.	The completes the proof.
		\end{proof}	
		We still need to note that the following result holds.

		\begin{remark}\label{re3}
			For any vertex $v\in L^j$, $j\in \{4,5\},$ there is a path $P_{vu_2}=vu_{i}u_{i-1}\dots u_2$ connecting $v$ and $u_2,$ where $u_i\in L^i$ for $i\in \{2,3,4\}$.
		\end{remark}	
	
		According to the Claim \ref{c2}, we divide the Part 4 with $z_{i_0} \not\sim z_{i+1}$ into five cases.
		The local structures for each case are presented in Figure \ref{figcases741745}. 
		By Lemma \ref{lem_yi_in_Q_implies_yi_in_L4p}, if any  vertex $u_4\in L^4\cap \mathcal{Q}$ and $u_4\in L^4(u_3),$ then $u_4 \in L^4_P(u_3)$ and there is a unique $u_4^\prime \in N_G(u_4) \cap L^4(u_3).$
				
		\begin{figure}
			\centering
			\includegraphics[scale=0.9]{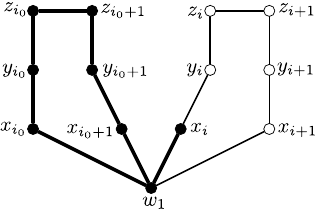}
			\caption {$C_7(x_{i_0}x_{i_0+1})\cup C_7(x_{i}x_{i+1})$ ($x_{i_0+1}$ and $x_{i}$ may  coincide)}
			\label{figcase743_c70}
		\end{figure}
	\begin{figure}
			\centering
			\includegraphics[scale=0.8]{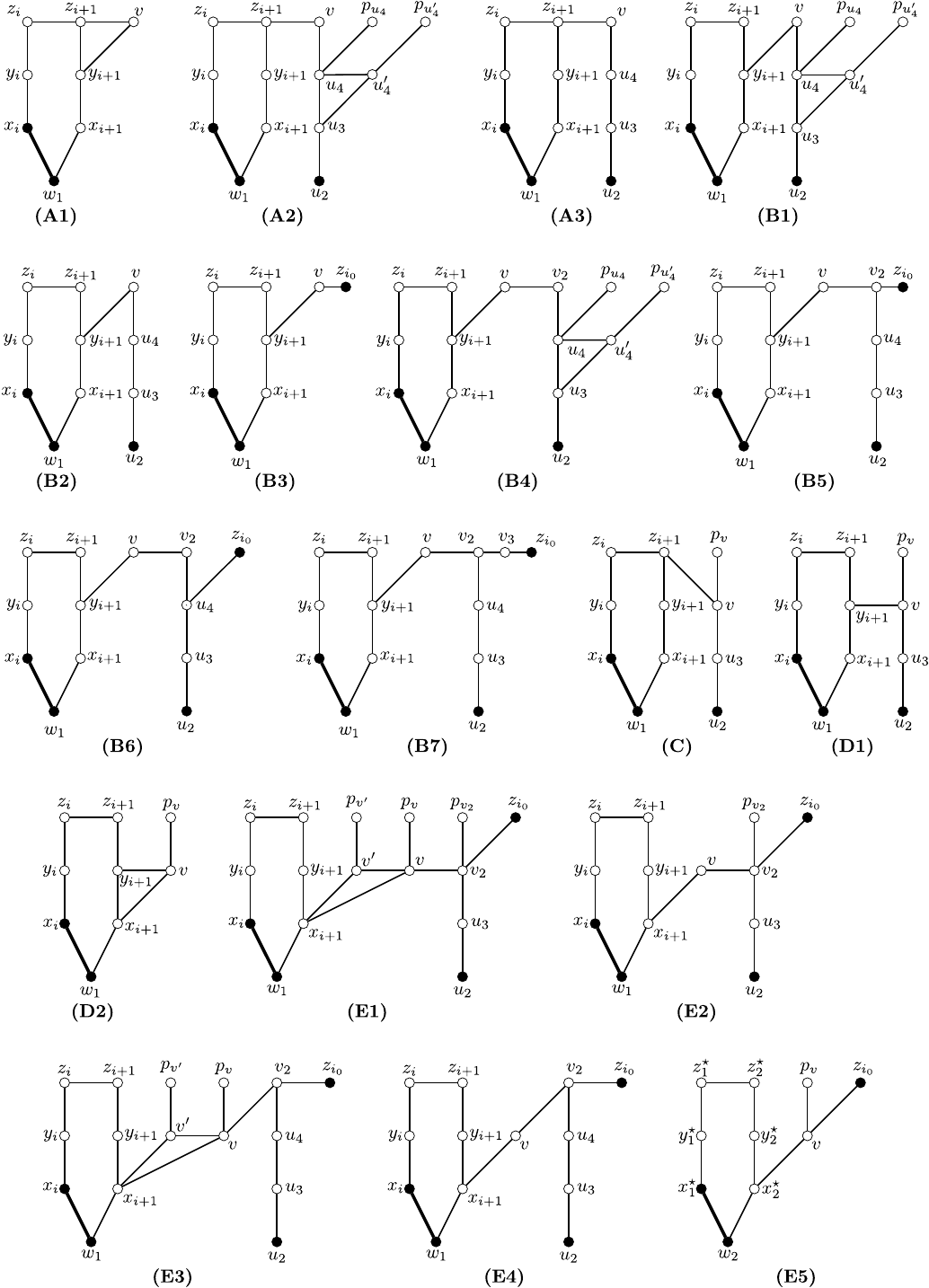}
			\caption{Local structures for each case of the Part 4 with $z_{i_0} \not\sim z_{i+1}$}
			\label{figcases741745}
		\end{figure}
		\textbf{Case 1.} $\{z_{i}, z_{i+1}\} \cap N_G(v) \ne \emptyset$ and $v \in L^5$. Without loss of generality, assume that $v\sim z_{i+1}.$ By Remark \ref{re3},  there exists a path  $P_{vu_2} = v u_4 u_3 u_2$. Then one of followings holds:
	 $u_4 = y_{i+1}$;
 $u_4 \ne y_{i+1}$ and $u_4 \in \mathcal{Q}$;   $u_4 \ne y_{i+1}$ and $u_4 \notin \mathcal{Q}$. See $\textbf{(A1)}$--$\textbf{(A3)}$ in Figure \ref{figcases741745}.
	
		\textbf{Case 2.} $\{y_i, y_{i+1}\} \cap N_G(v) \ne \emptyset$ and $v \in L^5$. Without loss of generality, assume that $v\sim y_{i+1}$. 
		
		If $|N_G(v) \cap L^4| \ge 2,$ by Remark \ref{re3}, there exists a path $P_{vu_2} = v u_4 u_3 u_2$, and one of followings holds:
	  $u_4\in \mathcal{Q};$
		 $u_4 \notin \mathcal{Q}$. See $\textbf{(B1)}$--$\textbf{(B2)}$ in Figure \ref{figcases741745}.

		If $|N_G(v) \cap L^4|=1$, since $d_G(v)\ge 2,$ then $v$ has a neighbor in $L^5,$ say $v_2$. By Remark~\ref{re3}, there exists a path $P_{v_2u_2} = v_2 u_4 u_3 u_2$.
		Recall that $P_{z_{i_0}z_{i+1}}=P_{z_{i_0}v}\cup P_{vz_{i+1}}$. 
		Since the length of $P_{vz_{i+1}}$ is 2,  
		then $P_{z_{i_0}v}$ has length 1, 2 or 3. 
			Let $r^\prime$ denote the length of  $P_{z_{i_0}v}.$
			If $r^\prime=1,$ then $v_2=z_{i_0};$ if $r^\prime=2,$ 
we write $P_{z_{i_0}v}=z_{i_0}v_2v$; if $r^\prime=3,$ we write $P_{z_{i_0}v}=z_{i_0}v_2v_3v$. Then one of followings holds:
		 $r^\prime=1;$ $r^\prime\ge 2$ and $u_4 \in \mathcal{Q}$; 
		 $r^\prime=2$ and $u_4 \not\in \mathcal{Q}$;
		$r^\prime=3,$ $u_4 \not\in \mathcal{Q},$ $v_3\in L^4$ and $v_3=u_4$;
		$r^\prime=3,$ $u_4 \not\in \mathcal{Q},$ and $v_3\in L^5$.
                 See $\textbf{(B3)}$--$\textbf{(B7)}$ in Figure \ref{figcases741745}.

		\textbf{Case 3.} $\{z_{i}, z_{i+1}\} \cap N_G(v) \ne \emptyset$ and $v \in L^4$. Without loss of generality, assume that $v\sim z_{i+1}$. By Remark \ref{re3}, there is a path $P_{vu_2} = v u_3 u_2$. 
		                 See $\textbf{(C)}$ in Figure \ref{figcases741745}.
	
		\textbf{Case 4.} $\{y_i, y_{i+1}\} \cap N_G(v) \ne \emptyset$ and $v \in L^4$. Without loss of generality, assume that $v\sim y_{i+1}.$ By Remark~\ref{re3}, there is a path $P_{vu_2} = v u_3 u_2$.	Then one of followings holds: 
$u_3 \ne x_{i+1}$;  $u_3 = x_{i+1}$.                  See $\textbf{(D1)}$--$\textbf{(D2)}$ in Figure \ref{figcases741745}.

		\textbf{Case 5.} $v\sim x_{i+1}.$ 			
 Then $v \in L^4$ and the path $P_{z_{i_0}z_{i+1}}$ has length at least 4. Let $r$ denote the length of  $P_{z_{i_0}z_{i+1}}.$ If  $r=4,$ then $P_{z_{i_0}z_{i+1}}=z_{i_0}vx_{i+1}y_{i+1}z_{i+1}$, but $z_{i_0}y_{i_0}x_{i_0}w_1x_{i+1}vz_{i_0}$ forms a $C_6$ in $G$, a contradiction. 
So, $r=5$ and write  $P_{z_{i_0}z_{i+1}}=z_{i_0}v_2vx_{i+1}y_{i+1}z_{i+1}.$ Then one of the followings holds:
 $v \in \mathcal{Q}$ and $v_2 \in L^4$; 
		 $v \notin \mathcal{Q}$ and $v_2 \in L^4$;
	 $v \in \mathcal{Q}$ and $v_2 \in L^5$;
 $v \notin \mathcal{Q}$ and $v_2 \in L^5$.    See $\textbf{(E1)}$--$\textbf{(E4)}$ in Figure \ref{figcases741745}.

			\begin{table}
			\caption{Results for Cases 1.1--5.3}
			\label{table_results_Cases1173}
			\centering
			\fontsize{9}{11}\selectfont
			\begin{tabular}
				{@{}cccccccc@{}}
				\toprule 
				\textbf{Case}& $n_0$ & $m_0$ & removable vertices & $a_{\max}$ & $a_{\min}$ & $(a, 4a + 5n_0 - 4m_0, k_{\min})$\\ 
				
				\textbf{1.1}  &  $15$ & $20$ & $p_{y_i^\prime}, p_{y_{i+1}^\prime}$ & $3$ & $2$ & $(3,7,10),(2,3,7)$\\
				
				\textbf{1.2}  &  $12$ & $16$ & $p_{y_i^\prime}$ & $3$ & $2$ & $(3,8,10),(2,4,7)$\\ 
				
				\textbf{1.3}  &  $12$ & $16$ & $p_{y_{i+1}^\prime}$ & $2$ & $2$ & $(2,4,7)$\\
				
				\textbf{1.4}  &  $9$ & $12$ & $\emptyset$ & $2$ & $1$ & $(2,5,7),(1,1,4)$\\
				
				\textbf{2.1}  &  $14$ & $19$ & $p_{y_i^\prime}, p_{y_{i+1}^\prime}$ & $3$ & $2$ & $(3,6,9),(2,2,7)$\\
				
				\textbf{2.2}  &  $11$ & $15$ & $p_{y_i^\prime}$ & $3$ & $2$ & $(3,7,9),(2,3,7)$\\ 
				
				\textbf{2.3}  &  $11$ & $15$ & $p_{y_{i+1}^\prime}$ & $2$ & $2$ & $(2,3,6)$\\ 
				
				\textbf{2.4}  &  $8$ & $11$ & $\emptyset$ & $2$ & $2$ & $(2,4,6)$\\
				
				\textbf{3.1}  &  $13$ & $17$ & $p_{y_i^\prime}, p_{y_{i+1}^\prime}$ & $2$ & $1$ & $(2,5,7),(1,1,4)$\\
				
				\textbf{3.2}  &  $10$ & $13$ & $p_{y_i^\prime}$ & $2$ & $1$ & $(2,6,7),(1,2,4)$\\ 
				
				\textbf{3.3}  &  $10$ & $13$ & $p_{y_{i+1}^\prime}$ & $1$ & $1$ & $(1,2,4)$\\ 
				
				\textbf{3.4}  &  $7$ & $9$ & $\emptyset$ & $1$ & $1$ & $(1,3,4)$\\
				
				\textbf{4.1}  &  $13$ & $17$ & $p_{y_i^\prime}, p_{y_{i+1}^\prime}$ & $2$ & $1$ & $(2,5,8),(1,1,4)$\\
				
				\textbf{4.2}  &  $10$ & $13$ & $p_{y_i^\prime}$ & $2$ & $1$ & $(2,6,8),(1,2,4)$\\ 
				
				\textbf{4.3}  &  $10$ & $13$ & $p_{y_{i+1}^\prime}$ & $1$ & $1$ & $(1,2,4)$\\ 
				
				\textbf{4.4}  &  $7$ & $9$ & $\emptyset$ & $1$ & $1$ & $(1,3,5)$\\
				
				\textbf{5.1}  &  $11$ & $14$ & $p_{y_i^\prime}, p_{y_{i+1}^\prime}$ & $1$ & $1$ & $(1,3,4)$\\ 
				
				\textbf{5.2}  &  $8$ & $10$ & $p_{y_i^\prime}$ & $1$ & $1$ & $(1,4,4)$\\
				
				\textbf{5.3}  &  $8$ & $10$ & $p_{y_{i+1}^\prime}$ & $0$ & $1$ & $-$\\
				\hline
			\end{tabular}
		\end{table}

		Based on the above classification, we have the following lemma.

		\begin{lem}\label{B}
			For Part 1 and Part 4, there are suitable $U_i^+$ and $E_i^+$ such that $\rho_i \ge \frac{5}{4}$. 
		\end{lem}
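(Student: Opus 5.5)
The natural approach is a uniform accounting over the local configurations, driven by Table~\ref{table_results_Cases1173} for Part~1 and Figure~\ref{figcases741745} for Part~4. In each configuration the union of the white vertices and thin edges is $U_i^\ast$ with $n_0$ vertices and $m_0$ edges. The removable pendant vertices $p_x$, for $x\in\mathcal{S}$, are discarded as in Definition~\ref{def3}, and so are the vertices and edges already present in $V_4^i$ (Definition~\ref{def2}). Let $a$ be the number of removable vertices that actually exist; $a$ lies between $a_{\min}$ and $a_{\max}$. Then $|U_i|=n_0-(a_{\max}-a)$ and $|E_i|=m_0-(a_{\max}-a)$, since each discarded $p_x$ takes exactly one edge with it. The required inequality $4|E_i^+|\ge 5|U_i^+|$ is then equivalent to a quantity of the form $4a+5n_0-4m_0$ being bounded above by a threshold $k_{\min}$. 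This is exactly what the last column of the table records: for each admissible $a$ one checks numerically that $4a+5n_0-4m_0\le k_{\min}$.

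\textbf{Step 1: the trivial choice.} Set $U_i^+=U_i\setminus U_i^-$ and $E_i^+=E_i\setminus E_i^-$. Removing already-added parts can only help. Each already-added vertex $u\in U_i^-$ carries at least one already-added edge, and by Remark~\ref{remark3} its pendant vertex has also been added. So I would show that every deleted vertex deletes at most $\frac54$ of an edge on average, or else fall back on a cruder bound. In practice this means checking that the ratio $|E_i\setminus E_i^-|/|U_i\setminus U_i^-|$ stays at least $\frac54$. I expect to get this from the fact that, by Lemmas~\ref{lem_yi_in_L4}--\ref{lem_distinct}, the vertices in each figure are distinct and the thin edges form a tree-plus-cycles structure. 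Each leaf $p_x$ contributes ratio $1$, while each interior cycle vertex contributes surplus.

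\textbf{Step 2: Part~1 case by case.} Using the table row for each of Cases~1.1--5.3, I verify the inequality for both extreme values of $a$; since the quantity is linear in $a$, intermediate values follow. Case~5.3 has $a_{\max}<a_{\min}$, which means the configuration is impossible: Lemma~\ref{lem_yi_in_Q_implies_yi_in_L4p} forces $y_{i+1}'$ to be in a position incompatible with $z_i,z_{i+1}\notin\mathcal{Q}$. I would show this by exhibiting a short cycle of length $4$--$6$. For rows where the trivial choice fails because many vertices are already in $V_4^i$, I would enlarge $U_i^+$ along the path given by Remark~\ref{re3}, adding the $L^3$ ancestor and its edge to $L^2$, in the same way as in Lemma~\ref{5.3.1}.

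\textbf{Step 3: Part~4 and the main obstacle.} In the configurations (A1)--(E4), $U_i^+$ consists of $C_7^-(x_ix_{i+1})$ together with the extra path vertices $v,u_4,u_3,\dots$ and their pendants. The surplus edges come from the second $C_7(x_{i_0}x_{i_0+1})$ and from the connecting path $P_{z_{i_0}z_{i+1}}$, which closes an additional cycle. Each of these cycles gives one edge beyond a tree, and that extra edge offsets the deficit $-\frac14$ per vertex.

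The main obstacle is the interaction between removable vertices and vertices already lying in $V_4^i$, for instance when $u_3$ or $x_{i+1}'$ coincides with an earlier vertex. I would handle this with the rule that whenever a shared vertex is absent from $U_i^+$, the edges joining it to the new vertices still count in $E_i^+$. Such a coincidence therefore never lowers the ratio below that of the disjoint case. I would verify this rule once, generically, rather than separately in each of the roughly twelve subcases.
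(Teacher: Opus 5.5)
There is a genuine gap, and it is exactly at the nontrivial point of the lemma: white vertices of a configuration that already lie in $V_4^i$. Two of your claims are false. In Step~1 you say that removing already-added parts ``can only help''. In Step~3 you say that a coincidence ``never lowers the ratio below that of the disjoint case''. Deleting $U_i^-$ removes every configuration edge joining it to black vertices or to itself, i.e.\ $|E_i^-|=|U_i^-|+a$ edges. Here $a>0$ as soon as $U_i^-\cup\{b_0\}$ closes cycles in the contracted graph $H$. The deleted edge-per-vertex ratio can then exceed $\frac54$. For example, in Case~1.1 with $a=3$ and $|U_i^-|=10$ one deletes $13$ edges with $10$ vertices, yet still gets $\rho_i=\frac{20-13}{15-10}=\frac75$. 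So your per-vertex criterion fails, and the unspecified ``cruder bound'' is precisely the missing argument. The paper supplies it as follows. It uses the exact equivalence $\rho_i\ge\frac54\iff k\ge 4a+5n_0-4m_0$, where $k=|U_i^-|$ and $a=|E_i^-|-|U_i^-|=c_1(H_i)-c_2(H_i)+1\le c_1(H)$. It then observes that realizing $a$ independent cycles through $b_0$ forces $U_i^-$ to be large, including the pendant vertices dragged in by Remark~\ref{remark3}; this lower bound is $k_{\min}$. Consequently your reading of the table is wrong. The parameter $a$ is not the number of existing removable vertices: Case~1.1 has $a\in\{2,3\}$ with only two removable vertices, and Case~1.4 has $a\in\{1,2\}$ with none. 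Your formulas $|U_i|=n_0-(a_{\max}-a)$ and $|E_i|=m_0-(a_{\max}-a)$ have no meaning in this setting. The inequality to verify is $k_{\min}\ge 4a+5n_0-4m_0$, a lower bound on $|U_i^-|$ that only makes sense after this cycle counting.

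The same misreading misleads you in Step~2. In Case~5.3 (and several rows of Table~\ref{table_results_case743}), $a_{\max}<a_{\min}$ does not signal an impossible configuration. No incompatibility between $y_{i+1}'$ and $z_i,z_{i+1}\notin\mathcal{Q}$ is available, and none is needed; $H$ there simply has cyclomatic number $0$ because $x_{i+1}$ is deleted. What it means is that $4a+5n_0-4m_0\le 0$ for every achievable $a$, so $U_i^+=U_i\setminus U_i^-$ works for every $k\ge 0$. No enlargement along Remark~\ref{re3} is needed anywhere in Part~1.

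Two smaller omissions remain. First, Part~4 contains the subcase $z_{i_0}\sim z_{i+1}$, which is not among (A1)--(E4), since the path from Fact~5 requires non-adjacency. It is settled directly by $U_i^+=V(C_7^-(x_ix_{i+1}))$ and $E_i^+=E(C_7^-(x_ix_{i+1}))\cup\{z_{i_0}z_{i+1}\}$, giving $\rho_i=\frac75$. Second, the removable pendants must be handled separately from $a$. Dropping a nonexistent $p_x$ lowers $n_0$ and $m_0$ by one each, leaves $a$ unchanged because $p_x$ is a leaf, and lowers $k$ by at most one. Hence $k'\ge k-1\ge 4a'+5n_0'-4m_0'$.
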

		\begin{proof}
			For Part 4 with $z_{i_0}\sim z_{i+1}$, we set 
			\[
			U_i^+= V(C_7^-(x_ix_{i+1}))~~ \text{and}~~E_i^+ = E(C_7^-(x_ix_{i+1}))\cup \{z_{i_0}z_{i+1}\}.
			\]
			Then $\rho_i = \frac{7}{5} > \frac{5}{4}$.
				
			For Part 1 and  Part 4 with $z_{i_0}\not\sim z_{i+1}$, the vertex set $U_i^+=U_i\setminus U_i^-$ and the edge set $E_i^+=E_i\setminus E_i^-$. As $x_{i+1}\in U_i^+,$ then $U_i^+$ is nonempty. Recall that 
			$U_i = U_i^* \setminus \{ p_x : x \in \mathcal{S} \}$ (see Definition \ref{def3}).
			We first assume that $U_i=U_i^\ast,$ that is, $|U_i|=n_0$ and $|E_i|=m_0$. Let $|U_i^-| = k$ and $|E_i^-| = k + a$, where $k \ge 0$.
			We aim to prove that, for any nonempty $U_i^+ = U_i \setminus U_i^-$, the following  holds:
		\begin{equation}\label{Ineq1}
			\rho_i=\frac{|E_i^+|}{|U_i^+|} = \frac{m_0 - (k+a)}{n_0 - k} \ge \frac{5}{4},\quad\text{or equivalently,}\quad k \ge 4a + 5n_0 - 4m_0.
		\end{equation}
		If $4a + 5n_0 - 4m_0 \le 0$,  Eq. \eqref{Ineq1} holds for every $k \ge 0$. It suffices to consider that $4a + 5n_0 - 4m_0 \ge 1$. Let $a_{\min}$ be the least integer $a$ such that $4a + 5n_0 - 4m_0 \ge 1$. For each case, the value of $a_{\min}$ is displayed in the sixth column of Tables \ref{table_results_Cases1173} and \ref{table_results_case743}. Next, we establish an upper bound for $a$, denoted by $a_{\max}$. For each graph in Figures \ref{figcases1143}, \ref{figcases4473} and \ref{figcases741745}, we define a graph $H$ by contracting all the black vertices into a single vertex, say $b_0$, and deleting the vertex $x_{i+1}$. For example, for Case 1.1 and Case 2: \textbf{(B6)}, the graph $H$ is shown in Figure \ref{fig_H}.	
		
		Let $H_i=H[U_i^- \cup \{b_0\}],$ then	
		$V(H_i)=U_i^- \cup \{b_0\}$ and $E(H_i)=E_i^-.$
	
				\begin{figure}
		\centering
		\includegraphics[width=13cm,height=3.5cm]{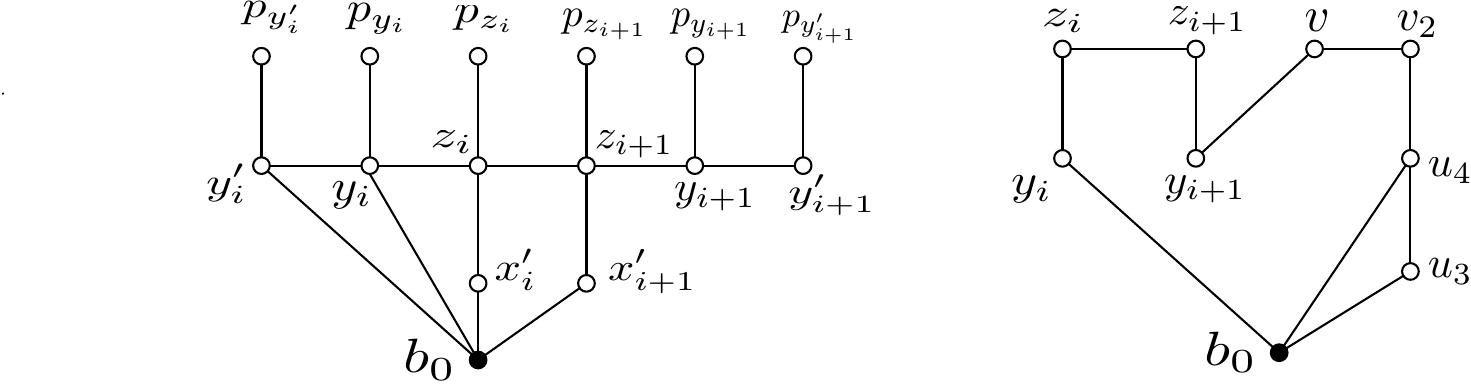}
		\caption{Graph $H$ for Case 1.1 and Case 2: \textbf{(B6)}}
		\label{fig_H}
	\end{figure}

		\begin{claim}\label{a} $a= c_1 - c_2 + 1,$ where 
			$c_1$ and $c_2$ are respectively the cyclomatic number and the number of connected components of $H_i.$
		\end{claim}
	
		\begin{proof}
			It is well known that for any graph $H$, $|E(H)| - |V(H)| = c_1 - c_2$, where $c_1$ and $c_2$ are respectively the cyclomatic number and the number of connected components of $H$. Then $|E(H_i)| - |V(H_i)|=|E_i^-| - |U_i^- \cup \{b_0\}| = c_1- c_2,$ and $a = |E_i^-| - |U_i^-| = c_1- c_2+ 1$.	
		\end{proof}
		Since $c_2\ge 1,$ then $a\le  c_1$. For each case, the cyclomatic number of $H$ is readily obtained, and hence an upper bound $a_{\max}$ for $a$ follows immediately, which is listed in the fifth column of Tables~\ref{table_results_Cases1173} and \ref{table_results_case743}.
		Notice that for each case of the Part 4 with $z_{i_0} \not\sim z_{i+1}$, since $V(C_7^-(x_ix_{i+1})) \cap V_4^i=\emptyset,$
		then $U_i^-$ contains no vertex of $V(C_7^-(x_ix_{i+1})).$
	For example, for Case 2: \textbf{(B6)}, since  $U_i^-$ contains no vertex of $\{y_i,y_{i+1},z_i,z_{i+1}\},$ then   
	$H_i$ has at most one cycle. It follows that $a_{\max}=1.$
	
	 If $a_{\max}<a_{\min},$ then Eq. (\ref{Ineq1}) holds trivially.
		We take Case 1.1  as an example to illustrate that  Eq. (\ref{Ineq1}) holds. 		
		For Case 1.1, from Table~\ref{table_results_Cases1173}, we have $a \in [2,3]$. Let $k_{\min}$ be the smallest cardinality of set  $U^-_{i}$. If $a=3$, then $4a + 5n_0 - 4m_0=7$ and $c_1=3$, which implies that $H_i$ contains three cycles: $b_0y_iy_i'b_0,$ $b_0y_iz_ix_i^\prime b_0$ and $b_0x_i^\prime  z_iz_{i+1}x_{i+1}^\prime b_0$. Now $U^-_{i}$ contains vertices $y_i, y_i', z_i, z_{i+1}, x_{i}^\prime, x_{i+1}'$. By Remark \ref{remark3}, $U^-_{i}$ also contains $p_{y_i}, p_{y_i'}, p_{z_i}, p_{z_{i+1}}$. Thus, $U^-_{i}$ contains at least ten vertices and $k\ge k_{min}\ge 10> 7=4a + 5n_0 - 4m_0,$ as desired. If $a=2,$ then $4a + 5n_0 - 4m_0=3$ and $c_1\ge 2,$ which implies that $H_i$ contains two cycles. It is easy to see that $U^-_{i}$ contains at least seven vertices. Thus, $k\ge k_{min}> 3=4a + 5n_0 - 4m_0,$  as desired. Hence, Eq. (\ref{Ineq1}) holds for Case 1.1.
	
		\begin{table}
			\caption{Results for  Part 4 with $z_{i_0} \not\sim z_{i+1}$}
			\label{table_results_case743}
			\centering
			\fontsize{9}{11}\selectfont
			\begin{tabular}
				{@{}ccccccc@{}}
				\toprule 
				& $n_0$ & $m_0$ & removable vertices & $a_{\max}$ & $a_{\min}$ & $(a, 4a + 5n_0 - 4m_0, k_{\min})$\\ 
				
				\textbf{(A1)}  &  $6$ & $8$ & $\emptyset$ & $0$ & $1$ & $-$\\
				
				\textbf{(A2)} &  $11$ & $14$ &$p_{u_4^\prime}$& $1$ & $1$ & $(1, 3, 5)$\\ 
				
				\textbf{(A3)} & $8$ & $10$ &$\emptyset$& $0$ & $1$ & $-$\\
				
				\textbf{(B1)}  &  $11$ & $14$ &$p_{u_4^\prime}$& $1$ & $1$ & $(1, 3, 5)$\\
				
				\textbf{(B2)}  &  $8$ & $10$ &$\emptyset$& $0$ & $1$ & $-$\\
				
				\textbf{(B3)}  &  $6$ & $8$ &$\emptyset$& $0$ & $1$ & $-$\\
				
				\textbf{(B4)}  &  $12$ & $15$ &$p_{u_4^\prime}$& $1$ & $1$ & $(1, 4, 5)$\\ 
				
				\textbf{(B5)}  & $9$ & $12$ &$\emptyset$& $1$ & $1$ & $(1, 1, 3)$\\
				
				\textbf{(B6)} & $9$ & $12$ &$\emptyset$& $1$ & $1$ & $(1, 1, 2)$\\
				
				\textbf{(B7)} & $10$ & $13$ &$\emptyset$& $1$ & $1$ & $(1, 2, 4)$\\
				
				\textbf{(C)}  &  $8$ & $10$ &$p_{v}$& $0$ & $1$ & $-$\\
				
				\textbf{(D1)} & $7$ & $9$ &$p_{v}$& $0$ & $1$ & $-$\\
				
				\textbf{(D2)} &  $8$ & $10$ &$p_{v}$& $0$ & $1$ & $-$\\ 
				
				\textbf{(E1)}  &  $12$ & $16$ &$p_{v^\prime}, p_{v_2}$& $1$ & $2$ & $-$\\
				
				\textbf{(E2)}  &  $9$ & $12$ &$p_{v_2}$& $1$ & $1$ & $(1, 1, 3)$\\ 
				
				\textbf{(E3)}  & $12$ & $16$ &$p_{v^\prime}$& $1$ & $2$ & $-$\\
				
				\textbf{(E4)} & $9$ & $12$ &$\emptyset$& $1$ & $1$ & $(1, 1, 3)$\\
				\hline
			\end{tabular}
		\end{table}
	
		For all remaining cases, the proof is similar, the last column of Tables \ref{table_results_Cases1173} and \ref{table_results_case743} yields that  $k_{\min} \ge 4a + 5n_0 - 4m_0$  holds for any $a\in [a_{min},a_{max}]$. Hence, Eq. (\ref{Ineq1}) holds.	

		Next we shall consider that
		$U_i = U_i^* \setminus \{ p_x : x \in \mathcal{S} \}\subset U_i^\ast,$ where $\mathcal{S}$ is nonempty.
		Similarly, we set $|U_i|=n_0^\prime,$ $|E_i|=m_0^\prime$, $|U_i^-| = k^\prime,$ $|E_i^-| = k^\prime+ a^\prime$, a graph $H^\prime$ by contracting all the black vertices  into a single vertex, say $b_0^\prime$, and deleting the vertex $x_{i+1}$, and $H_i^\prime=H^\prime[U_i^- \cup \{b_0^\prime\}].$ We still aim to prove that 	$k^\prime \ge 4a^\prime + 5n_0^\prime - 4m_0^\prime$ holds for  any $U_i^-\subset U_i$. Here we only consider that $U_i  = U_i^\ast \setminus\{p_{y_{i}^\prime}\}$; the others are similar. Then $n_0^\prime=n_0-1,$  $m_0^\prime=m_0-1$, $k^\prime=k-1$ if $y_{i}^\prime\in U_i^-$ and $k^\prime=k$ if $y_{i}^\prime\notin U_i^-$. One can see that there is a one-to-one correspondence between $H_i\setminus \{p_{y_{i}^\prime}\}$ and $H_i^\prime$. Since $p_{y_{i}^\prime}$ is the pendant vertex, by Claim \ref{a} and Remark \ref{remark3}, then $a=a^\prime$. By Eq. (\ref{Ineq1}), we have $k^\prime\ge k-1\ge 4a + 5n_0 - 4m_0-1=4a^\prime + 5n_0^\prime - 4m_0^\prime$, as desired.

		Consequently, $\rho_i \ge \frac{5}{4}$ holds. This completes the proof.
		\end{proof}

		Finally, we consider Part 5. If there is no $C_7(x_{i_0}x_{i_0+1})$ for all $i_0< i$ such that $\{z_{i_0}, z_{{i_0}+1}\} \subseteq L^5$, by Lemmas~\ref{5.3.1} and \ref{B}, we have $\rho_j\ge \frac{5}{4}$ for all $j=1,\dots,i-1$. By Eq. (\ref{eq4}), we have
		\[
		e(V_4^{i})=e(V_4^{1})+\sum_{j=1}^{i-1}|E_{j}| \ge \frac{5}{4}|V_4^{1}|-\frac{3}{2}+\sum_{j=1}^{i-1} 
		 \frac{5}{4}|U_{j}|= \frac{5}{4}(|V_4^{1}|+\sum_{j=1}^{i-1} 
		|U_{j}|)-\frac{3}{2}=\frac{5}{4}|V_4^{i}|-\frac{3}{2}.
		\]
		Now we set $U_i^+= V(C_7^-(x_ix_{i+1}))$ and $E_i^+ = E(C_7^-(x_ix_{i+1})),$ which implies that $|U_i^+|=5$ and $|E_i^+|=6$. 
		Hence,  $e(V_4^{i+1})=e(V_4^{i})+6 \ge \frac{5}{4}|V_4^{i}| - \frac{3}{2}+6=\frac{5}{4}(|V_4^{i}|+5) - \frac{7}{4}= \frac{5}{4}|V_4^{i+1}| - \frac{7}{4}$.
		 
		The step 6.1 is finished.

	\paragraph{Step 6.2.}
		In this step, we will add all vertices of $L^3_F(w_i)$  ($i=2,\dots,\ell$) into $V_4^t$. We now consider that $i=2$. Suppose that $L^3_F(w_2)=\{x_1^\star,x_2^\star,\dots,x_k^\star\}$ and $x_1^\star$ has been added in Step 5. By Proposition~\ref{prop1}, three vertices $w_2$, $x_1^\star$ and $x_{2}^\star$ lie on an induced $C_7(x_1^\star x_2^\star)=w_2 x_1^\star y_1^\star z_1^\star z_2^\star y_2^\star x_2^\star w_2$. Similar to Step 6.1, we want to construct $V_4^{t+1}$ from $V_4^t$ such that $V_4^{t+1} \setminus V_4^t$ contains the vertex $x_2^\star$; in particular, if $x_2^\star \in V_4^t$, then we set $V_4^{t+1} = V_4^t$. Thus, we  assume that $x_2^\star \notin V_4^t$. Let $U_{t+1}^+=V_4^{t+1} \setminus V_4^t$ and $E_{t+1}^+=E(U_{t+1}^+) \cup E(U_{t+1}^+,V_4^{t})$, where  $U_{t+1}^+$ contains the vertex $x_2^\star$.

		If $e(V_4^t) \ge \frac{5}{4}|V_4^t| - \frac{3}{2}$, analogously to Step 6.1, we can construct suitable $U_{t+1}^+$ and $E_{t+1}^+$ such that $e(V_4^{t+1}) \ge \frac{5}{4}|V_4^{t+1}| - \frac{7}{4}$. Otherwise, $e(V_4^t)=\frac{5}{4}|V_4^t| - \frac{7}{4}$. By Step~6.1, there exist some indexes $j \in \{1,\dots,t-1\}$ such that $e(V_4^j)=\frac{5}{4}|V_4^j|-\frac{3}{2}$ and
		$e(V_4^{j+1})=\frac{5}{4}|V_4^{j+1}|-\frac{7}{4}$. Let $j_0$ be the largest such index. Then 
		$U_{j_0}^+=V(C_7^-(x_{j_0} x_{j_0+1})),$ $E_{j_0}^+=E(C_7^-(x_{j_0} x_{j_0+1}))$ and
		$\{z_{j_0},z_{j_0+1}\}\subseteq L^5$.

		\begin{claim}\label{claim:property_of_Uj0}
			For any $i$ with $j_0+1\le i\le t$, $e(V_4^i) = \frac{5}{4}|V_4^i| - \frac{7}{4}$.
		\end{claim}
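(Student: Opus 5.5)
We argue by induction on $i$, starting from $i=j_0+1$. The key point is that the quantity $e(V_4^i)-\frac{5}{4}|V_4^i|$ never increases from $-\frac{7}{4}$ once it has reached that value. This follows from the boundary value at $t$ together with the monotonicity supplied by Lemma~\ref{lemma3.7}.

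\textbf{Base and lower bound.} For $i=j_0+1$ the equality holds by the choice of $j_0$. Next, Step 6.1 established $e(V_4^{i})\ge \frac{5}{4}|V_4^{i}|-\frac{7}{4}$ for every $i\le t$, so we only need the reverse inequality for $j_0+1\le i\le t$.

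\textbf{Upper bound, first attempt.} Suppose some $i$ with $j_0+1<i\le t$ has $e(V_4^i)>\frac{5}{4}|V_4^i|-\frac{7}{4}$. Going from $V_4^i$ to $V_4^t$, each step lies in one of Parts 1--5.
\begin{itemize}
\item Parts 1--4 give $\rho\ge\frac54$ (Lemmas~\ref{5.3.1} and \ref{B}), so by the computation behind Lemma~\ref{lemma3.7} the surplus $e-\frac54|\cdot|$ does not decrease.
\item A Part 5 step has $\rho=\frac{6}{5}$, so the surplus drops by exactly $\frac14$. Such a step only occurs when no earlier $C_7(x_{i_0}x_{i_0+1})$ has both $z$'s in $L^5$. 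But $C_7(x_{j_0}x_{j_0+1})$ has $\{z_{j_0},z_{j_0+1}\}\subseteq L^5$ and $j_0<i$, so every later step with the same shape is classified as Part 4, not Part 5.
\end{itemize}
Hence after $j_0$ no Part 5 step occurs, and the surplus is nondecreasing from $i$ to $t$. This would give $e(V_4^t)>\frac54|V_4^t|-\frac74$, contradicting the standing assumption $e(V_4^t)=\frac54|V_4^t|-\frac74$.

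\textbf{Integrality.} We still need to know that "greater than" really means a gap large enough to contradict equality at $t$. This is where the obstacle lies. Quantities of the form $e-\frac54|V|+\frac74$ lie in $\frac14\mathbb{Z}$. A strict inequality at $V_4^i$ therefore means the surplus is at least $-\frac74+\frac14=-\frac32$. Monotonicity then gives the same bound at $t$, which contradicts the assumed equality there.

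\textbf{Alternative bookkeeping.} We can also phrase the argument through maximality of $j_0$ instead. Suppose some $i>j_0+1$ has surplus $\ge -\frac32$. Since the surplus returns to $-\frac74$ at $t$, some step $j\ge i$ must drop it. Only a Part 5 step can drop it, and such a step goes exactly from $-\frac32$ to $-\frac74$. Such a $j$ would be a larger index with the defining property of $j_0$, contradicting maximality. The main thing to verify carefully is that Parts 1--4 never decrease the surplus, i.e.\ that $\rho\ge\frac54$ there, and that the drop in Part 5 is exactly $\frac14$. Both are already recorded in the text.
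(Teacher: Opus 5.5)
Your argument is correct and is essentially the paper's: the surplus $e(V_4^i)-\frac54|V_4^i|+\frac74$ is nondecreasing for $i\ge j_0+1$ because every later step falls in Parts 1--4 (the $C_7(x_{j_0}x_{j_0+1})$ with both $z$'s in $L^5$ rules out Part 5), and it equals $0$ at both $i=j_0+1$ and $i=t$, so it is identically $0$. Your integrality detour is unnecessary, since a strict inequality at some $i$ already propagates by monotonicity to a strict inequality at $t$, which contradicts the assumed equality there.
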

\begin{proof}
Let $d_i=e(V_4^i) -( \frac{5}{4}|V_4^i| - \frac{7}{4})$ for $j_0+1\le i\le t.$
For $i=j_0+2,\dots,t,$ we have 
$d_i-d_{i-1}=e(V_4^i) -( \frac{5}{4}|V_4^i| - \frac{7}{4})-(e(V_4^{i-1}) -( \frac{5}{4}|V_4^{i-1}| - \frac{7}{4}))=|E_i^+|-\frac{5}{4}|U_i^+|.$
Since $\rho_i\ge \frac{5}{4},$ then $d_i-d_{i-1}\ge 0.$
Note that $d_{j_0+1}=d_t=0,$ then $d_i=0$ for all $i=j_0+2,\dots,t.$ Hence, $e(V_4^i) = \frac{5}{4}|V_4^i| - \frac{7}{4}$.
\end{proof}

		By Claim~\ref{claim:property_of_Uj0}, for   any $i$ with $j_0+1\le i\le t,$ we have $\rho_i= \frac{5}{4}$.

 If $V(C_7^-(x_1^\star x_{2}^\star)) \cap V_4^t\ne \emptyset$, or 
		$V(C_7^-(x_1^\star x_{2}^\star)) \cap V_4^t=\emptyset$ with $\{z_1^\star, z_{2}^\star\} \cap  L^4\ne \emptyset,$ similar to the Parts 2 and 3 in Step 6.1, there exist suitable $U_{t+1}^+$ and $E_{t+1}^+$  such that $\rho_{t+1}\ge \frac{5}{4}$. Otherwise, $V(C_7^-(x_1^\star x_{2}^\star)) \cap V_4^t=\emptyset$ and 
		$\{z_1^\star, z_{2}^\star\} \subseteq  L^5$. If $z_{j_0}$ is adjacent to  one vertex $v$ in $\{y_1^\star, y_{2}^\star, z_1^\star, z_{2}^\star\},$ we set  
		\[
		U_{t+1}^+ =V(C_7^-(x_1^\star x_{2}^\star))~\text{and}~E_{t+1}^+ =E(C_7^-(x_1^\star x_{2}^\star)) \cup \{z_{j_0}v\}.
		\]
		Then $\rho_{t+1}=\frac{7}{5} > \frac{5}{4}$.
		If $z_{j_0} \nsim z_{2}^\star$, there exists a path $P_{z_{j_0}z_2^\star}$ of length 3, 4 or 5 connecting $z_{j_0}$ and $z_2^\star$.
		Similar to Claim \ref{c2}, there exists a vertex $v \in V(P_{z_{j_0}z_2^\star})\setminus(V(C_7^-(x_ix_{i+1})\cup \{z_{i_0}, z_2^\star\})$ adjacent to some vertices in $\{x_1^\star,x_2^\star, y_1^\star, y_2^\star, z_1^\star, z_2^\star\}$.
		If $v$ is adjacent to some vertices in $\{x_2^\star, y_1^\star, y_2^\star, z_1^\star, z_2^\star\}$, it can be handled analogously to Part~4 with $z_{i_0}\not\sim z_{i+1}$ in Step~6.1, yielding that $\rho_{t+1} \ge \frac{5}{4}$. The only  difference occurs in Case~5:  in Step~6.1 
		the length of path $P_{z_{i_0}z_{i+1}}$ must equal to  $5$, here the corresponding path $P_{z_{i_0}z_2^\star}$ may  have length $4$, namely $P_{z_{i_0}z_2^\star} = z_{i_0} v x_2^\star y_2^\star z_2^\star$, which gives one extra configuration \textbf{(E5)} (see Figure~\ref{figcases741745}). One can easily check that  the inequality $\rho_{t+1} \ge \frac{5}{4}$ still holds for   \textbf{(E5)}. Therefore, the only remaining case is when $v$ is adjacent to $x_1^\star$, which yields the path $P_{z_{j_0} z_2^\star} = z_{j_0} v x_1^\star y_1^\star z_1^\star z_2^\star$ (see Figure~\ref{figcasestep62}).
		\begin{figure}
			\centering
			\includegraphics[scale=0.8]{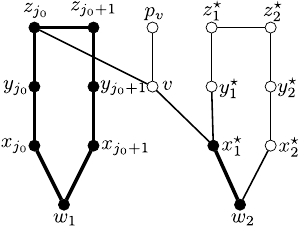}
			\caption{Step 6.2 with $v$ adjacent to $x_1^\star$}
			\label{figcasestep62}
		\end{figure}
		
		\begin{claim}\label{c4}
			$v \notin V_4^t.$
		\end{claim}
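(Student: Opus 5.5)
The plan is to argue by contradiction, using the equality structure from Claim~\ref{claim:property_of_Uj0} together with the layer structure around $v$. First I pin down where $v$ lives. Since $z_{j_0}\in L^5$ and $v\sim z_{j_0}$, we have $v\in L^4\cup L^5$. Since $v\sim x_1^\star$ with $x_1^\star\in L^3_F(w_2)\subseteq L^3$, we have $v\in L^2\cup L^3\cup L^4$. Hence $v\in L^4$.

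By Proposition~\ref{prop3}, $x_1^\star$ is the unique neighbour of $v$ in $L^3$, so $v\in L^4(x_1^\star)$. Also $d(v)\ge 2$, so $v$ is not a pendant vertex. Consequently $v$ was not put into $V_4$ in Steps 1--5, because those steps add only vertices of $L^0\cup\dots\cup L^3$ and pendant vertices. So if $v\in V_4^t$, then $v\in U_j^+$ for some $j\in\{1,\dots,t-1\}$ in Step 6.1.

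Next I split according to $j$, using the quantity $d_i=e(V_4^i)-(\frac54|V_4^i|-\frac74)$ from the proof of Claim~\ref{claim:property_of_Uj0}. The key observation is the sharper inequality
\[
d_{i+1}-d_i \ge |E_i^+|-\tfrac54|U_i^+| + \#\{\text{edges of } E(V_4^{i+1})\setminus E(V_4^i) \text{ not in } E_i^+\}.
\]

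\emph{Case $j\le j_0$.} Then $v\in V_4^{j_0}$ already, and at step $j_0$ the set $U_{j_0}^+=V(C_7^-(x_{j_0}x_{j_0+1}))$ contains $z_{j_0}$. The edge $z_{j_0}v$ is not in $E(C_7^-(x_{j_0}x_{j_0+1}))$, because $v\ne y_{j_0},z_{j_0+1}$: $v$ lies in $L^4(x_1^\star)$, whereas $y_{j_0}\in L^4(x_{j_0})$ with $x_{j_0}\in L^3_F(w_1)$, $x_{j_0}\ne x_1^\star$, and $z_{j_0+1}\in L^5$. So this edge is counted in $e(V_4^{j_0+1})$ but not in $E_{j_0}^+$. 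This gives $d_{j_0+1}\ge \frac14(\text{from }-\frac32\to-\frac74)+1>0$, which contradicts $d_{j_0+1}=0$.

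\emph{Case $j_0<j\le t-1$.} By Claim~\ref{claim:property_of_Uj0} we have $\rho_j=\frac54$ exactly, and no edge of $E(V_4^{j+1})\setminus E(V_4^j)$ is undesignated. Both $vz_{j_0}$ and $vx_1^\star$ join $v$ to vertices already in $V_4^j$. Here $x_1^\star\in V_4$ by Step 5, and $z_{j_0}\in V_4^{j_0+1}$. So both edges must lie in the designated set $E_j^+$.

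I then go through the possible roles $v$ can play in the configurations of Parts 2--4 (Figures~\ref{figcases1143}--\ref{figcases741745}). Every designated $L^4$-vertex there is tied to a designated $L^3$-parent: $y$'s to $x_j,x_{j+1}\in L^3_F(w_1)$, and $z$'s and $u_4$'s to their $x'$ or $u_3$. For $v$ the parent is $x_1^\star$, so $vx_1^\star$ designated forces $x_1^\star$ to be one of these $x'/u_3$ vertices. I also need $vz_{j_0}$ to be designated, which happens only in Part 4, where a path from $z_{j_0}$ is used. In the case $z_{i_0}\sim z_{i+1}$, or whenever an extra edge is present, one gets $\rho_j=\frac75$ or another value strictly larger than $\frac54$, contradicting $\rho_j=\frac54$. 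In the remaining Part 4 configurations, the vertex adjacent to $z_{j_0}$ on $P_{z_{j_0}z_{j+1}}$ together with $x_1^\star$ would produce a short cycle: $z_{j_0}vx_1^\star w_2\cdots$ joined with the $C_7(x_jx_{j+1})$ structure through $w_1$ and $\alpha_1$ gives a $C_4$, $C_5$ or $C_6$, a contradiction.

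The main obstacle is this last case analysis, namely excluding that $v$ was added with both edges designated and yet $\rho_j=\frac54$. I would first try to dispose of it uniformly by the cycle argument, that is, by finding a cycle of length in $[4,6]$ through $v$, $z_{j_0}$, $y_{j_0}$, $x_{j_0}$, $w_1$, $\alpha_1$, $w_2$, $x_1^\star$. This falls back on the table-by-table check only if the uniform argument fails.
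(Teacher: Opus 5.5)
\textbf{Gap in the case $j_0<j\le t-1$.} Your localisation of $v$ is correct: by Proposition~\ref{prop3} we have $v\in L^4(x_1^\star)$, $v$ is not pendant, and hence $v\in U_j^+$ for some step $j$ of Step~6.1. Your case $j\le j_0$ is also correct and is the paper's argument: step $j_0$ contributes exactly six new edges, and $vz_{j_0}$ would be a seventh. For $j=j_0$ itself you should add that $v\notin U_{j_0}^+$, since the only $L^4$-vertices there are $y_{j_0},y_{j_0+1}$, whose parents lie in $L^3_F(w_1)$.

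The gap is the case $j_0<j\le t-1$. This case carries the whole difficulty, and you only outline it. The ``uniform'' cycle argument you propose cannot work. The vertices $v,z_{j_0},y_{j_0},x_{j_0},w_1,\alpha_1,w_2,x_1^\star$ are eight pairwise distinct vertices, so no cycle through all of them has length in $[4,6]$. In fact $v z_{j_0} y_{j_0} x_{j_0} w_1 \alpha_1 w_2 x_1^\star v$ is an $8$-cycle, which $G$ is allowed to contain. The adjacencies $v\sim z_{j_0}$ and $v\sim x_1^\star$ are exactly the situation Step~6.2 is dealing with, so $\mathcal{C}_{[4,6]}$-freeness by itself refutes nothing. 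What must be refuted is a bookkeeping statement: that $v$ entered at a step with $\rho_j=\frac54$. Your further assertions are also left unverified, namely that $vz_{j_0}$ can be designated only in Part~4 and that the remaining Part~4 configurations close up into short cycles.

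\textbf{What is missing.} You need to use the tightness $\rho_j=\frac54$ quantitatively, as the paper does.
\begin{enumerate}
\item Parts~3 and~4 with $z_{i_0}\sim z_{i+1}$ have $\rho_j>\frac54$, and Part~5 cannot recur after $j_0$.
\item In Part~1 and in Part~4 with $z_{i_0}\not\sim z_{i+1}$, tightness forces $k=4a+5n_0-4m_0$ in \eqref{Ineq1}. Comparing with the $k_{\min}$ column of Tables~\ref{table_results_Cases1173} and~\ref{table_results_case743} leaves only two possibilities: Case~5.2 with $a=1$, $k=4$; or $a=0$, $k=0$ with $m_0/n_0=\frac54$ and $U_j^+=U_j^\ast$, $E_j^+=E_j^\ast$.
\item In these few configurations one checks that some edge from $v$ to an earlier vertex is not designated. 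That surplus edge gives $\rho_j>\frac54$, a contradiction.
\end{enumerate}
The paper uses the edge $vz_{j_0}$ for the last step. Your observation that $vx_1^\star$ must also be designated could shorten that check. For instance, a tight Part~2 step needs $k=1$ and forces $v\in\{z_j,z_{j+1}\}$, and then $vx_1^\star$ is not an edge of $C_7^-(x_jx_{j+1})$. Some such table-driven verification has to be carried out, though; without it the case $j>j_0$ remains unproved.
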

		\begin{proof}
			Suppose for contradiction that $v \in V_4^t$.
			Then $v$ belongs to one $U_{j_1}^+$.
			 If $j_1 \le j_0$, then during the process of adding $U_{j_0}^+$ to $V_4^{j_0}$, we should set
			$E(C_7^-(x_{j_0}x_{j_0+1})) \cup \{v z_{j_0}\} \subseteq E_{j_0}^+,$
			which yields $\rho_{j_0} \ge \frac{7}{5} > \frac{5}{4}$, contradicting  that
			$e(V_4^{j_0+1}) = \frac{5}{4}|V_4^{j_0+1}| - \frac{7}{4}.$
			So, $j_1 > j_0$.
			Since  $\rho_i= \frac{5}{4}$ for all $j_0+1\le i\le t,$  then $\rho_{j_1} = \frac{5}{4}$.
			By Eq. (\ref{Ineq1}), then $k=4a + 5n_0 - 4m_0.$ 
			 If $k=4a + 5n_0 - 4m_0\ge 1,$  by Tables~\ref{table_results_Cases1173} and~\ref{table_results_case743},
			then $k=k_{min}=4a + 5n_0 - 4m_0$ if and only if $U_{j_1}^+$  belongs to Case 5.2, 
			$U_{j_1}^+=U_{j_1}^*~\text{and} ~E_{j_1}^+=E_{j_1}^*,~\text{or}~U_{j_1}^+=\{z_{i},z_{i+1},y_{i+1},x_{i+1}\}~\text{and} ~E_{j_1}^+=\{
			y_iz_i,z_iz_{i+1},y_{i+1}x_{i+1},x_{i+1}w_1\};$
			if $k=4a + 5n_0 - 4m_0\le  0,$ then $k=0.$
			It follows that $a=0$ and  $\frac{m_0}{n_0}=\frac{5}{4}.$ By Tables~\ref{table_results_Cases1173} and~\ref{table_results_case743}, 
	                 $U_{j_1}^+$  belongs to one of the following cases: Cases 5.2, 5.3,   \textbf{(A3)}, \textbf{(B2)}, \textbf{(C)} and \textbf{(D2)}. Moreover, we have 
                         $U_{j_1}^+=U_{j_1}^*$ and $E_{j_1}^+=E_{j_1}^*.$

			If $z_{j_0} \notin U_{j_1}$, then $v z_{j_0}\not\in E_{j_1}^+.$
			Now we add the edge $v z_{j_0}$ into $E_{j_1}^+,$ obtaining
			 a larger edge set $E_{j_1}^+$ such that $\rho_{j_1} > \frac{5}{4}$, contradicting $\rho_{j_1} = \frac{5}{4}$.
			
			If $z_{j_0} \in U_{j_1}$, then $z_{j_0}\in U_{j_1}^-$ and $|U_{j_1}^-|=k>0.$ Then  $U_{j_1}^+$  belongs to Case 5.2, $U_{j_1}^+=\{z_{i},z_{i+1},y_{i+1},x_{i+1}\}$ and $E_{j_1}^+=\{y_iz_i,z_iz_{i+1},y_{i+1}x_{i+1},x_{i+1}w_1\}.$ In Case 5.2, no such vertex $z_{j_0}$ in $ U_{j_1}^-=U_{j_1} \setminus U_{j_1}^+$ belongs to $L^5,$	
			a contradiction.
	
	Hence, $v \notin V_4^t.$		
This completes the proof.		\end{proof}
		By Claim \ref{c4}, we set
		\[
		\begin{cases}
			U_{t+1}^+ =V(C_7^-(x_1^\star x_{2}^\star)) \cup \{v\}~\text{and}~E_{t+1}^+ =E(C_7^-(x_1^\star x_{2}^\star)) \cup \{z_{j_0}v, vx_{1}^\star\}, &~\text{if}~  v \notin \mathcal{Q};\\
			U_{t+1}^+ =V(C_7^-(x_1^\star x_{2}^\star)) \cup \{v,p_v\}~\text{and}~E_{t+1}^+ =E(C_7^-(x_1^\star x_{2}^\star)) \cup \{z_{j_0}v, vx_{1}^\star,vp_v\}, &~\text{if}~  v \in \mathcal{Q}.
		\end{cases}
		\]
		Then $\rho_{t+1}=\frac{8}{6}$ or  $\frac{9}{7}$, and $\rho_{t+1}>\frac{5}{4}$. Now by Lemma \ref{lemma3.7}, we have $e(V_4^{t+1}) \ge \frac{5}{4}|V_4^{t+1}| - \frac{7}{4}$. 
		
		The remaining vertices $x_3^\star,\dots,x_k^\star$ in $L^3_F(w_2)$ can be added  by  the same method as before.
		Moreover, in a similar way, we can add all vertices of $L^3_F(w_j)$ for all $j\ge 3$ and obtain the set $V_5$ such that $e(V_5) \ge \frac{5}{4}|V_5| - \frac{7}{4}$. 
		Indeed, the vertex set $V_5$ satisfies Properties 1, 2, and 3.
	
		Finally, we perform Step 7. 
		Let $X_4=V_5\setminus L_P^4.$ Then $|V^{\star}|=|V_5|+|X_4|+|X_4\cap \mathcal{Q}|.$
		For any $v\in X_4,$ then $v \in L^4_P(x)$ for one $x\in L^3$ and there is one vertex $v^\prime\ne v$ such that
		$v^\prime\sim v$ and $v^\prime \in L^4_P(x).$
		Let $X_4=X_4^\prime \cup X_4^{\prime\prime}$ where $X_4^\prime=\{v\in X_4 |$ the vertex $v^\prime\not\in V_5\}$ and $X_4^{\prime\prime}=\{v\in X_4 |$ the vertex $v^\prime\in V_5\}.$ Now we have 
			\begin{equation*}
			\begin{split}
				e(V^{\star}) \ge ~  & e(V_5) + e(L^3, X_4) +e(X_4^\prime)+e(X_4^{\prime\prime},V_5\cap L^4) + |X_4\cap \mathcal{Q}|\\
				\ge ~ & \frac{5}{4}|V_5| -\frac{7}{4}+|X_4|+\frac{1}{2}|X_4^\prime|+|X_4^{\prime\prime}|+|X_4\cap \mathcal{Q}|\\
				\ge ~ & \frac{5}{4}|V_5| +\frac{5}{4}|X_4|+\frac{5}{4}|X_4\cap \mathcal{Q}|-\frac{7}{4}+
				\frac{1}{4}|X_4|-\frac{1}{4}|X_4\cap \mathcal{Q}|+\frac{1}{2}|X_4^{\prime\prime}|\\
				= ~& \frac{5}{4}|V^{\star}| -\frac{7}{4}+\frac{1}{4}(|X_4|-|X_4\cap \mathcal{Q}|)+\frac{1}{2}|X_4^{\prime\prime}|\\
				\ge ~ & \frac{5}{4}|V^{\star}| -\frac{7}{4}.
			\end{split}
		\end{equation*}
		
	Hence, such vertex set $V^{\star}$ satisfies Properties~1,~2, and~$3^+$.
		
	\subsubsection{$\delta(G) = 2$}
		Let $D_2(G)$ be the set of all the vertices of degree $2$ in $G$. We partition $D_2(G)$ into three sets:
		\[
		D_2^i(G) = \{ v \in D_2(G) : |N_G(v) \cap D_2(G)| = i \} ~~\text{for $i = 0,1$ or $2$}.
		\]
		
		Using the discharge method, we have the following lemma.
		\begin{lem}\label{discharge}
			Suppose that $G$ is  $\mathcal{C}_{[4,6]}$-saturated. If $G$ contains no vertex $\alpha_0$ belonging to one of the following four types$:$
			\begin{enumerate}[itemsep=0pt, parsep=0pt, leftmargin=*, align=left]
				\item[Type $\uppercase\expandafter{\romannumeral 1}:$] $\alpha_0 \in D_2^2(G);$
				\item[Type $\uppercase\expandafter{\romannumeral 2}:$] $N_G(\alpha_0) = \{\alpha_1, \alpha_2, \alpha_3\}$ with $\{\alpha_1, \alpha_2, \alpha_3\} \subseteq D_2^0(G);$
				\item[Type $\uppercase\expandafter{\romannumeral 3}:$] $N_G(\alpha_0) = \{\alpha_1, \alpha_2, \alpha_3\}$ with $\alpha_1 \in D_2^1(G)$ and $\alpha_2 \in D_2^0(G) \cup D_2^1(G);$
				\item[Type $\uppercase\expandafter{\romannumeral 4}:$] $N_G(\alpha_0) = \{\alpha_1, \alpha_2, \alpha_3, \alpha_4\}$ with
				$\{\alpha_1, \alpha_2, \alpha_3\} \subseteq D_2^1(G)$ and $\alpha_4 \in D_2^0(G) \cup D_2^1(G),$	
			\end{enumerate}
			then $e(G) \ge \frac{5}{4}|V(G)|.$
		\end{lem}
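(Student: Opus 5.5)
We will use a discharging argument. Every vertex $u$ starts with charge $d_G(u)$, so the total charge is $2e(G)$, and it suffices to redistribute charge so that every vertex ends with at least $\frac52$. Since $\delta(G)=2$ is the relevant setting (and degree-$1$ vertices do not occur here, $\delta(G)\ge 2$), only vertices of degree $2$ start below $\frac52$. Each such vertex is short by exactly $\frac12$. Vertices of degree $3$ have surplus $\frac12$, vertices of degree $4$ have surplus $\frac32$, and vertices of degree $d\ge 5$ have surplus $d-\frac52\ge \frac{d}{2}$.

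\textbf{Discharging rule.} Each vertex $u$ of degree at least $3$ sends to each neighbour $x\in D_2(G)$ the amount
\begin{itemize}
\item $\frac12$ if $x\in D_2^0(G)$,
\item $\frac12$ if $x\in D_2^1(G)$, charged to the unique non-$D_2$ neighbour of $x$.
\end{itemize}
No charge is needed for $D_2^2(G)$, since Type I is excluded. A vertex in $D_2^0$ then receives $1$, one $\frac12$ from each of its two neighbours. A vertex in $D_2^1$ receives $\frac12$ from its high-degree neighbour. In both cases the final charge is at least $\frac52$.

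\textbf{Checking high-degree vertices.} We must show that every vertex $u$ with $d(u)\ge3$ keeps at least $\frac52$. It sends at most $\frac12$ per neighbour in $D_2$, so it suffices that the number of $D_2$-neighbours is at most $2(d(u)-\frac52)$.

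For $d(u)\ge5$, note that $2(d-\frac52)=2d-5\ge d$. So $u$ may send $\frac12$ to every neighbour without dropping below $\frac52$.

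For $d(u)=4$, the vertex can afford three $D_2$-neighbours. Having all four neighbours in $D_2$ is excluded as follows:
\begin{itemize}
\item If at least three of them lie in $D_2^1$, we are in Type IV.
\item Otherwise at least two lie in $D_2^0$. Fact 2 and the structure of $G[N(u)]$ then need to be exploited, presumably forcing a $C_4$ or a short cycle. This is where we expect a small case check.
\end{itemize}
The uncertain subcase, four degree-$2$ neighbours with at least two in $D_2^0$, may actually be compatible with $\mathcal{C}_{[4,6]}$-saturation. In that case the rule has to be refined: $D_2^0$ vertices should draw charge from neighbours that have slack, e.g. degree $\ge5$ or degree-$3$ vertices with no other $D_2$-neighbours.

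For $d(u)=3$, the vertex can afford only one $D_2$-neighbour. If it has two or three, then:
\begin{itemize}
\item three $D_2^0$-neighbours is Type II;
\item a $D_2^1$-neighbour together with another neighbour in $D_2^0\cup D_2^1$ is Type III;
\item the remaining possibility is exactly two $D_2^0$-neighbours $\alpha_1,\alpha_2$ and a third neighbour of degree $\ge3$.
\end{itemize}

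\textbf{Main obstacle.} The hardest part is the degree-$3$ vertex with exactly two $D_2^0$-neighbours. There I would use Proposition~\ref{prop1} (if $\alpha_1,\alpha_2$ are nonadjacent, they lie on an induced $C_7$ through $u$) to locate the other neighbours of $\alpha_1,\alpha_2$. Those neighbours have degree $\ge3$ and should have slack, so I would let each $D_2^0$ vertex draw its charge preferentially from a neighbour that has spare charge. I would verify, by a case analysis on degrees and using the absence of $C_4,C_5,C_6$, that such a neighbour always exists.

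If the local rules are all consistent, summing the final charges gives $2e(G)\ge\frac52 n$, i.e. $e(G)\ge\frac54|V(G)|$.
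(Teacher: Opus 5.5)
\textbf{There is a genuine gap.} Your discharging framework is the same as the paper's, but your rule for $D_2^0(G)$ pays twice what is needed, and this is what produces your ``main obstacle'' and ``uncertain subcase.'' A vertex of $D_2^0(G)$ is short by only $\frac12$, yet you have each of its two neighbours send it $\frac12$. Under that rule, a degree-$3$ vertex with two $D_2^0$-neighbours ends at $2$. So does a degree-$4$ vertex whose four neighbours all have degree $2$, at least two of them in $D_2^0$. Excluding Types I--IV rules out neither configuration, so the hypotheses cannot rescue your rule. The refinement you sketch (drawing charge from neighbours with slack, located via Proposition~\ref{prop1} and induced $C_7$'s) is never specified. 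As written, the argument does not prove the lemma.

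\textbf{The missing idea} is to split the $\frac12$ deficit of a $D_2^0$-vertex evenly: it receives $\frac14$ from each of its two neighbours. A $D_2^1$-vertex still receives $\frac12$ from its unique neighbour of degree at least $3$, as in your rule. Every degree-$2$ vertex then ends at exactly $\frac52$, since Type I is excluded. A neighbour of a vertex of degree $\ge 3$ cannot lie in $D_2^2$, so the remaining checks are immediate:
\begin{itemize}
\item \emph{Degree $3$.} The vertex sends at most $\frac12$. Two $D_2^0$-neighbours cost $\frac14+\frac14$, and a lone $D_2^1$-neighbour costs $\frac12$. Three $D_2^0$-neighbours is Type II, and a $D_2^1$-neighbour together with any other degree-$2$ neighbour is Type III.
\item \emph{Degree $4$.} The vertex sends at most $\frac32$. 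With at most two $D_2^1$-neighbours it sends at most $2\cdot\frac12+2\cdot\frac14$. With three $D_2^1$-neighbours, Type IV forces the fourth neighbour to have degree $\ge 3$.
\item \emph{Degree $d\ge 5$.} The vertex sends at most $\frac{d}{2}$ and keeps at least $\frac{d}{2}\ge\frac52$.
\end{itemize}
With this rule, no structural case analysis involving $C_7$'s is needed.
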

		
		\begin{proof}
			Assume that $G$ contains no vertex of the above four types. For each $v \in V(G)$, define its initial charge as $ch(v) = d_G(v)$. Then $e(G) = \frac{1}{2} \sum_{v \in V(G)} d_G(v) = \frac{1}{2} \sum_{v \in V(G)} ch(v).$
			
			Now we redistribute the charges according to the following rules:
			\begin{itemize}[itemsep=0pt, parsep=0pt]
				\item[(R1)] Every vertex $v \in D_2^0(G)$ receives $\frac{1}{4}$ from each of its two neighbors.
				\item[(R2)] Every vertex $v \in D_2^1(G)$ receives $\frac{1}{2}$ from its unique neighbor whose degree exceeds $2$.
			\end{itemize}
			Let $ch'(v)$ denote the final charge of each vertex $v \in V(G)$ after the redistribution.
			Then $ch'(v)\ge \frac{5}{2}$ for any vertex $v$ of degree $2$ in $G.$
			
			For any vertex $v$ of degree $3$ in $G$, if $v$ has a neighbor of degree $2$,  by 
			forbidding Types \uppercase\expandafter{\romannumeral 2} and \uppercase\expandafter{\romannumeral 3}, then
			one of  the following two possibilities can occur:

			$\bullet$ $v$ has exactly one neighbor in $D_2^1(G)$, and its other two neighbors have degree at least $3$;
				
			$\bullet$ $v$ has exactly two neighbors in $D_2^0(G)$, and its remaining neighbor has degree at least $3$.\\
			In either case, $ch'(v) \ge \frac{5}{2}$.
			
			For any vertex $u$ of degree $4$ in $G$, if $u$ has a neighbor of degree $2$, by forbidding Type \uppercase\expandafter{\romannumeral 4}, then one of the following two possibilities can occur:	

			$\bullet$ $|N_G(u) \cap D_2^1(G)| = 3$, and $d(v) \ge 3$ for  the   vertex $v \in N_G(u) \setminus D_2^1(G)$;
					
			$\bullet$ $|N_G(u) \cap D_2^1(G)| \le 2$. \\
			In either case, $ch'(u) \ge \frac{5}{2}$.
			
			For any vertex $w$ of degree at least $5$, $ch'(w) \ge ch(w) - \frac{1}{2}d_G(w) \ge \frac{5}{2}.$

			Conclusively, $
			e(G) = \frac{1}{2} \sum_{v \in V(G)} ch(v) = \frac{1}{2} \sum_{v \in V(G)} ch'(v)
			\ge \frac{5}{4}|V(G)|.$
		\end{proof}

		Furthermore, we have:
		\begin{figure}
			\centering
			\includegraphics[scale=1]{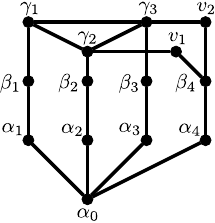}
			\caption{Graph in the proof of Lemma~\ref{lem:no-type4}}
			\label{fig_type4}
		\end{figure}	
		
		\begin{lem}\label{lem:no-type4}
			If $G$ is  $\mathcal{C}_{[4,6]}$-saturated, then $G$ has no vertex of Type \uppercase\expandafter{\romannumeral 4}.
		\end{lem}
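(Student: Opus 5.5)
The plan is to suppose that $\alpha_0$ is a vertex of Type IV, with $N_G(\alpha_0)=\{\alpha_1,\alpha_2,\alpha_3,\alpha_4\}$, and derive a contradiction from two sources: $C_i$-freeness for $4\le i\le 6$, and Fact~5 / Proposition~\ref{prop1}. First I fix the local structure. Each $\alpha_i$ with $i\le 3$ lies in $D_2^1(G)$. Its neighbor $\alpha_0$ has degree $4$, so its second neighbor $\beta_i$ has degree $2$. By Fact~2, $G[N_G(\alpha_0)]$ is a disjoint union of $K_2$'s and $K_1$'s, so at most two of the edges among the $\alpha_i$ are present. I split into three cases according to how many $K_2$'s lie in $G[N_G(\alpha_0)]$ (two, one, or none). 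Since $\alpha_0$ has no neighbor of degree $1$, we have $\alpha_0\notin\mathcal{Q}$, so Proposition~\ref{prop1} applies to every pair of isolated vertices of $G[N_G(\alpha_0)]$.

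\emph{Case of two $K_2$'s.} Say $\alpha_1\alpha_2,\alpha_3\alpha_4\in E(G)$. Then every $\alpha_i$ has both of its neighbors in $\{\alpha_0\}\cup N_G(\alpha_0)$, because $\alpha_4$ also has degree $2$. Hence $G$ is the bowtie on $5$ vertices, which contradicts $n\ge 7$ together with Fact~1.

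\emph{Case of one or no $K_2$'s.} Here I use Proposition~\ref{prop1}. For two isolated $\alpha_i,\alpha_j$ with $i,j\le 3$ there is an induced $C_7=\alpha_0\alpha_i\beta_i\gamma_i\gamma_j\beta_j\alpha_j\alpha_0$. This path is forced, because $\alpha_i$ and $\beta_i$ have degree $2$, so the cycle must pass through $\beta_i$ and its other neighbor $\gamma_i$. Consequently $\gamma_i\sim\gamma_j$. When there is no $K_2$, this makes $\gamma_1\gamma_2\gamma_3$ a triangle, with each $\gamma_i$ of degree at least $3$. For $\alpha_4$ I treat the two subcases $\alpha_4\in D_2^0(G)$ and $\alpha_4\in D_2^1(G)$ separately. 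In each, I apply Proposition~\ref{prop1} to the pair $(\alpha_1,\alpha_4)$ to obtain a forced $C_7$ through $\alpha_4$ and its second neighbor $\delta$, and hence further adjacencies between $\delta$ (or its successor) and the $\gamma_i$'s.

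The aim is then to exhibit either a cycle of length $4$, $5$ or $6$ among the vertices $\gamma_i,\beta_i,\delta,\ldots$, or a violation of Fact~2 at some $\gamma_i$. For example, suppose $\gamma_1$ is adjacent to both $\gamma_2$ and $\gamma_3$ and also to a neighbor of $\alpha_4$'s branch. Then $G[N_G(\gamma_1)]$ cannot be of the form $tK_2\cup sK_1$ without creating a $C_4$ or $C_5$ through the adjacent $\gamma$'s. A second possible source of contradiction is a non-adjacent pair such as $(\beta_i,\gamma_j)$ or $(\alpha_i,\gamma_j)$ for which every connecting path is forced through the degree-$2$ vertices and so has length $2$ or at least $6$, contradicting Fact~5. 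In the one-$K_2$ subcase, say $\alpha_1\alpha_2\in E(G)$, I run the same argument with only $\alpha_3,\alpha_4$ isolated. This yields $C_7=\alpha_0\alpha_3\beta_3\gamma_3\cdots\alpha_4\alpha_0$, and the forced local structure is checked in the same way.

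The main obstacle is this last bookkeeping step. Saturation alone gives nothing locally, since most added edges close harmless short cycles through $\alpha_0$. The contradiction must therefore come from combining the several forced induced $C_7$'s through $\alpha_0$, which share the segments $\alpha_i\beta_i\gamma_i$. My first attempt will be to show that two of these $C_7$'s, glued along a common segment, produce a cycle of length in $[4,6]$ (for instance through the triangle $\gamma_1\gamma_2\gamma_3$ together with $\delta$). If that fails, I will fall back to locating a non-adjacent pair whose only connecting paths have lengths outside $\{3,4,5\}$, contradicting Fact~5.
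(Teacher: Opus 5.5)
\textbf{The one-$K_2$ subcase cannot be closed, because the lemma is false there.} Your two-$K_2$ case is correct, but only because of the standing assumption $n\ge 7$: the bowtie itself is $\mathcal{C}_{[4,6]}$-saturated. The one-$K_2$ subcase, however, cannot be ``checked in the same way.''

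Let $G$ consist of the triangle $\alpha_0\alpha_1\alpha_2$ and the $7$-cycle $\alpha_0\alpha_3c_1c_2c_3c_4\alpha_4\alpha_0$, sharing only $\alpha_0$. This is the paper's own graph $G_{6,1}$ with $n=9$ and $10$ edges. Its only cycles have lengths $3$ and $7$, so it has no $C_4$, $C_5$ or $C_6$. It is saturated:
\begin{itemize}
\item A chord of the $7$-cycle closes either a $C_3$ and a $C_6$, or a $C_4$ and a $C_5$.
\item Add $\alpha_1c$ or $\alpha_2c$, where $c\neq\alpha_0$ is the $j$-th vertex after $\alpha_0$ on the $7$-cycle ($1\le j\le 6$). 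This closes cycles of lengths $j+2$, $j+3$, $9-j$ and $10-j$, and one of these always lies in $[4,6]$.
\end{itemize}
So $G$ is $\mathcal{C}_{[4,6]}$-saturated with $\delta(G)=2$ and $n=9$. Each of $\alpha_1,\dots,\alpha_4$ has degree $2$ and exactly one neighbor of degree $2$, so $\alpha_0$ is a vertex of Type IV.

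No argument can close this subcase. What is true is the lemma under the extra hypothesis that $N_G(\alpha_0)$ is independent (plus $n\ge 7$). The paper's proof makes that assumption silently: it writes $N_G(\alpha_i)=\{\alpha_0,\beta_i\}$ and applies Proposition~\ref{prop1} to every pair without checking that the pair is isolated in $G[N_G(\alpha_0)]$. So your case split has exposed a real hole in the paper as well. It also affects how Lemma~\ref{discharge} is used afterwards, since a Type IV vertex carrying a pendant triangle would have to be treated separately.

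\textbf{The independent case is set up correctly but the decisive step is missing.} Your setting is the right one: forced induced $C_7$'s and the triangle $\gamma_1\gamma_2\gamma_3$. But a single forced $C_7$ through $\alpha_4$, such as the one for the pair $(\alpha_1,\alpha_4)$, only gives a common neighbor of $\gamma_1$ and the second neighbor $\beta_4$ of $\alpha_4$. By itself that creates no short cycle. Your fallbacks via Fact~2 at $\gamma_1$ or via Fact~5 are not what is needed.

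The paper uses two such cycles, $\alpha_0\alpha_2\beta_2\gamma_2v_1\beta_4\alpha_4\alpha_0$ and $\alpha_0\alpha_3\beta_3\gamma_3v_2\beta_4\alpha_4\alpha_0$. If $v_1=v_2$, then $v_1\gamma_2\gamma_1\gamma_3v_1$ is a $C_4$. Otherwise $\beta_4v_1\gamma_2\gamma_3v_2\beta_4$ is a $C_5$. Only $d_G(\alpha_4)=2$ is used, so your planned split into $\alpha_4\in D_2^0(G)$ versus $\alpha_4\in D_2^1(G)$ is unnecessary.
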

		\begin{proof}
			By contradiction, assume that there is a  vertex $\alpha_0$ in $G$ belonging to Type \uppercase\expandafter{\romannumeral 4}, with
			$N_G(\alpha_0)=\{\alpha_1,\alpha_2,\alpha_3,\alpha_4\}$,
			$N_G(\alpha_i)=\{\alpha_0,\beta_i\}$  for $i=1,2,3,4$, and
			$N_G(\beta_j)=\{\alpha_j,\gamma_j\}$ for $j=1,2,3$.
			By Proposition \ref{prop1}, the triples $\{\alpha_0,\alpha_1,\alpha_2\}$, $\{\alpha_0,\alpha_1,\alpha_3\}$ and $\{\alpha_0,\alpha_2,\alpha_3\}$ lie on induced $C_7$, say $C_7(\alpha_1 \alpha_2) = \alpha_0 \alpha_1 \beta_1 \gamma_1 \gamma_2 \beta_2 \alpha_2 \alpha_0, C_7(\alpha_1 \alpha_3) = \alpha_0 \alpha_1 \beta_1 \gamma_1 \gamma_3 \beta_3 \alpha_3 \alpha_0$ and	$C_7(\alpha_2 \alpha_3) = \alpha_0 \alpha_2 \beta_2 \gamma_2 \gamma_3 \beta_3 \alpha_3 \alpha_0,$ respectively.
			Also, the triples $\{\alpha_0,\alpha_2,\alpha_4\}$ and $\{\alpha_0,\alpha_3,\alpha_4\}$  lie on induced $C_7$, say $C_7(\alpha_2 \alpha_4) = \alpha_0 \alpha_2 \beta_2 \gamma_2 v_1 \beta_4 \alpha_4 \alpha_0$ and $C_7(\alpha_3 \alpha_4) = \alpha_0 \alpha_3 \beta_3 \gamma_3 v_2 \beta_4 \alpha_4 \alpha_0,$ respectively. See Figure \ref{fig_type4}. 
			If $v_1 = v_2$ or $v_1 \ne  v_2$, then $v_1\gamma_2\gamma_1\gamma_3 v_1$ forms a $C_4$ or $\beta_4 v_1 \gamma_2 \gamma_3 v_2 \beta_4$  forms a $C_5$  in   $G,$ respectively, which
			 contradicts that $G$ is  $\mathcal{C}_{[4,6]}$-saturated.
		\end{proof}
		
		By Lemmas \ref{discharge} and \ref{lem:no-type4}, we can assume that the graph $G$ contains a vertex belonging to one of the Types \uppercase\expandafter{\romannumeral 1}, \uppercase\expandafter{\romannumeral 2} and \uppercase\expandafter{\romannumeral 3}.	We now construct $V^\star$ of this section through the following steps.

 	\paragraph{Step 1. Initialization: $V_0$ (see Figure \ref{figd2cases13})}   	

		$\bullet$ Type \uppercase\expandafter{\romannumeral 1}: We select one vertex $\alpha_0$ belonging to Type \uppercase\expandafter{\romannumeral 1}. Let $L^0 = \{\alpha_0\}$ and $L^1 = \{\alpha_1, \alpha_2\}$. By Proposition \ref{prop1}, there is a $C_7$ containing $\alpha_0,\alpha_1,\alpha_2,$ see the first graph in Figure \ref{figd2cases13}. Since $d_G(\alpha_i)=2$ for $i=1,2$, then $L^2= \{\alpha_3, \alpha_4\}$. Now we set $V_0=L^0 \cup L^1\cup  L^2\cup \{\beta_1,\beta_2\}.$

		$\bullet$ Type \uppercase\expandafter{\romannumeral 2}: We select one vertex $\alpha_0$ belonging to Type \uppercase\expandafter{\romannumeral 2}. Let $L^0 = \{\alpha_0\}$ and $L^1 = \{\alpha_1, \alpha_2, \alpha_3\}$. By Proposition \ref{prop1}, there are some $C_7$ containing $\alpha_0,\alpha_1,\alpha_2,$ or $\alpha_0,\alpha_1,\alpha_3,$ or $\alpha_0,\alpha_2,\alpha_3,$ see the second graph in Figure \ref{figd2cases13}. Since $d_G(\alpha_i)=2$ for $i=1,2,3$, then $L^2= \{\alpha_4,\alpha_5,\alpha_6\}$.
		Now we set $V_0=L^0 \cup L^1\cup  L^2 \cup \{\beta_1,\beta_2,\beta_3,\beta_4,\beta_5,\beta_6\}.$

		$\bullet$ Type \uppercase\expandafter{\romannumeral 3}: We select one vertex $\alpha_0$ belonging to Type \uppercase\expandafter{\romannumeral 3}. Let $L^0 = \{\alpha_1\}$ and $L^1 = \{\alpha_0, \alpha_4\}$. By Proposition \ref{prop1}, there are some $C_7$ containing  $\alpha_1,\alpha_0,\alpha_4,$  see the third graph in Figure \ref{figd2cases13}. Since $d_G(\alpha_0)=4$ and $d_G(\alpha_4)=2,$ then $L^2= \{\alpha_2,\alpha_3,\alpha_5\}$. Now we set $V_0=L^0 \cup L^1\cup  L^2\cup \{\beta_1,\beta_2,\beta_3,\beta_4\}.$

		For Types \uppercase\expandafter{\romannumeral 1}, \uppercase\expandafter{\romannumeral 2}, \uppercase\expandafter{\romannumeral 3},  it is not hard to check that $e(V_0) \ge  \frac{5}{4}|V_0| - \frac{7}{4}.$
			
					\begin{figure}
			\centering
			\includegraphics[scale=1]{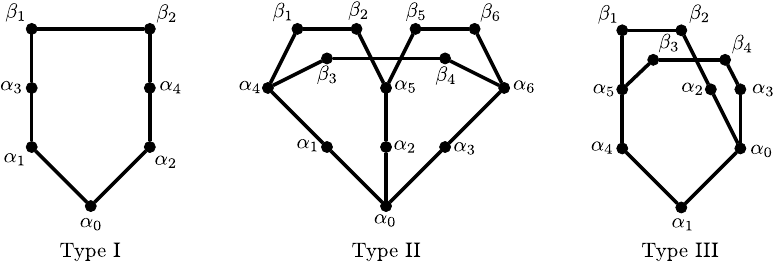}
			\caption{Initialization: $V_0$}
			\label{figd2cases13}
		\end{figure}
					
	\paragraph{Step 2.  $V_1=V_0 \cup L_P^3$}   	
		In this step, for Types \uppercase\expandafter{\romannumeral 1}, \uppercase\expandafter{\romannumeral 2}, \uppercase\expandafter{\romannumeral 3}, add all vertices of $L^3_P$ into $V_0$, obtaining $V_1$.
		Then
			\[
			|V_1| = |V_0| + |L^3_P|\quad \text{and} \quad
			e(V_1) = e(V_0) + \frac{3}{2}|L^3_P|
			\ge \frac{5}{4}|V_1| - \frac{7}{4} + \frac{1}{4}|L^3_P|.
			\]
	\paragraph{Step 3.  $V^{\star}=V_1\cup L_F^3$}   	
		This step is similar to the Step 6.1 in Section 3.1.  For each vertex $v \in L^2$, we add all vertices of $L^3_F(v)$ into $V_1,$ obtaining final set  $V^{\star}$. 
		
		Suppose that $v$ is any vertex in the $L^2$ of  Types \uppercase\expandafter{\romannumeral 1}, \uppercase\expandafter{\romannumeral 2} or \uppercase\expandafter{\romannumeral 3}, and $L_F^3(v)=\{x_1,\dots,x_t\}.$
		By Proposition \ref{prop1}, for any two non-adjacent $x_i,x_{i+1}$ in $L_F^3(v)$, three vertices 	$v,x_i,x_{i+1}$ lie in an induced $C_7(x_ix_{i+1})=vx_iy_iz_iz_{i+1}y_{i+1}x_{i+1}v$. Similarly, denote $V(C_7^-(x_i x_{i+1}))=V(C_7(x_ix_{i+1})) \setminus \{v, x_i\}$ and $E(C_7^-(x_i x_{i+1}))=E(C_7(x_ix_{i+1})) \setminus \{vx_i\}$  respectively.

		For convenience, let $V_1=V_1^0$. Similar to Step 6.1 in Section $3.1,$ we define $V_{1}^{i+1}=V_{1}^{i}\cup U_i^+$ for $i=0,1,\dots,t-1$, where $U_i^+$ contains the vertex $x_{i+1}$ if $x_{i+1}\not\in V_{1}^{i}$ and  $U_i^+=\emptyset$ if $x_{i+1}\in V_{1}^{i}$. Indeed, the final vertex set $V_{1}^{t}$ contains all vertices of $L_F^3(v)$. We still define analogously the set of newly added edges as $E_i^+= E(U_i^+) \cup E(U_i^+, V_4^i).$ With this notation, we have $e(V_4^{i+1}) \ge e(V_4^i) + |E_i^+|$. We now proceed by induction on $t$ to prove that $e(V_1^{t}) \ge \frac{5}{4}|V_1^{t}| - \frac{7}{4}$.
	
		For $t=0$, then $V_1^0=V_1$ and  result holds by Step 2. For $t=1$, note that each $v \in L^2$ has one neighbor $v^\ast\in L^3$ which has been added in Step 1. For example, if 
		$v=\alpha_3$ in Type \uppercase\expandafter{\romannumeral 1}, then $v^\ast=\beta_1$. If $v^\ast\in L_P^3(v)$, then $|L^3_P|\ge 2$  and $e(V_1)= e(V_1^0) \ge \frac{5}{4}|V_1^0| - \frac{7}{4} + \frac{1}{4}|L^3_P| \ge \frac{5}{4}|V_1^0| - \frac{5}{4}$.
		Now we set $U_0^+=\{x_1\}$ and  $E_0^+=\{v x_1\},$ and then $e(V_1^1)=	e(V_1^0)+1\ge \frac{5}{4}|V_1^0|- \frac{5}{4}+1 =\frac{5}{4}(|V_1^0|+1) - \frac{6}{4} \ge  \frac{5}{4}|V_1^1| - \frac{7}{4}.$
		If  $v^\ast\not\in L_P^3(v),$ then $v^\ast\in L_F^3(v).$ Assume that   $v^\ast=x_1,$ then  $x_1\in V_1^0.$
		Now we set $U_i^+=\emptyset$ and $E_i^+=\emptyset,$ which implies that  $V_{4}^{1}=V_{4}^{0}$ and $e(V_1^1)\ge  \frac{5}{4}|V_1^1| - \frac{7}{4}.$
		Hence, the result holds for $t=1$.
		
		By  the induction hypothesis, assume that $e(V_1^{i}) \ge \frac{5}{4}|V_1^{i}| - \frac{7}{4}$ for $2\le i\le t-1$.
		We divide into three cases to construct  suitable $U_i^+$ and $E_i^+$ such that $\rho_i \ge  \frac{5}{4}.$

		\begin{mycase}{1} 
			$V(C_7^-(x_ix_{i+1})) \cap V_1^i \ne \emptyset$. Let  $k = |V(C_7^-(x_ix_{i+1})) \cap V_1^i|\ge 1$. We set $U_i^+ = V(C_7^-(x_ix_{i+1}))\setminus V_1^i$ and $E_i^+ = E(C_7^-(x_ix_{i+1}))\setminus E(V_1^i).$ Then $\rho_i \ge \frac{6-k}{5-k} \ge \frac{5}{4}$.
		\end{mycase}
			
		\begin{mycase}{2} 
			$V(C_7^-(x_ix_{i+1})) \cap V_1^i = \emptyset$ and $\{z_i,z_{i+1}\} \nsubseteq L^5$. Without loss of generality, assume that $z_{i+1}\in L^4$ or $z_{i+1}\in L^3$. If $z_{i+1}\in L^3$, then $z_{i+1}\in L^3(u_2)$ for one vertex $u_2\in L^2$. We set
			$$U_i^+= V(C_7^-(x_ix_{i+1}))~\text{and} ~E_i^+ = E(C_7^-(x_ix_{i+1})) \cup \{z_{i+1}u_2\}.$$
			Then $\rho_i =\frac{7}{5}> \frac{5}{4}$. If $z_{i+1}\in L^4,$ then $z_{i+1}\in L^4(u_3)$ and $u_3\in L^3(u_2)$ for some vertices $u_3\in L^3$ and $u_2\in L^2$.
			We set 
			\[
			\begin{cases}
				U_i^+ =V(C_7^-(x_ix_{i+1})) ~\text{and}~E_i^+ = E(C_7^-(x_ix_{i+1})) \cup \{z_{i+1}u_3\}, &\text{if} ~ u_3\in V_1^i;\\
				U_i^+ =V(C_7^-(x_ix_{i+1}))\cup \{u_3\}~\text{and}~E_i^+ = E(C_7^-(x_ix_{i+1})) \cup \{z_{i+1}u_3,u_3u_2\}, &\text{if} ~ u_3\not\in V_1^i.
			\end{cases}
			\]
			Then $\rho_i = \frac{8}{6}$ or  $\frac{7}{5}$, which implies that $\rho_i > \frac{5}{4}$.
		\end{mycase}
			
		\begin{mycase}{3}  
			$V(C_7^-(x_ix_{i+1})) \cap V_1^i = \emptyset$ and $\{z_i,z_{i+1}\} \subseteq L^5$. Note that each $v\in L^2$ lies in a  $C_7(a_1b_1)=a_0a_1va_3b_3b_2b_1a_0$, where $a_0\in L^0,$ $a_1,b_1\in L^1,$ $b_2\in L^2,$ $a_3,b_3\in L^3,$ and   all vertices of $C_7(a_1b_1)$  have been added  in Step 1. For example, if $v=\alpha_3$ in Type \uppercase\expandafter{\romannumeral 1}, then $a_0=\alpha_0,$ $a_1=\alpha_1,$ $b_1=\alpha_2$, $b_2=\alpha_4$, $a_3=\beta_1$ and $b_3=\beta_2$.
			It is clear that $z_{i+1}\not\sim b_1.$
			 By Fact 5, there exists a path $P_{z_{i+1}b_1}$ of length $\ell \in \{3,4,5\}$ between $z_{i+1}$ and $b_1$, write $P_{z_{i+1}b_1}= z_{i+1} v_1 \dots v_{\ell-1} b_1$. Since $b_1\in L^1$, then the vertex $v_{\ell-1}\in V_1^i$. We set
		$$			U_i^+=V(C_7^-(x_ix_{i+1}))\cup \{v_1,\dots,v_{j^\ast}\}
	~\text{and}~
			E_i^+=E(C_7^-(x_ix_{i+1}))\cup \{z_{i+1}v_1,v_1v_2,\dots,v_{j^\ast}v_{j^\ast+1}\},$$
			where $j^\ast=\min \{j\in \{1,\dots,\ell-2\}| v_j\in V_1^i$ and $v_{j+1}\not\in V_1^i\}$. Then $\rho_i = \frac{6+1+ j^\ast}{5+  j^\ast} \ge \frac{5}{4}$ for $1\le j^\ast \le \ell-2.$
		\end{mycase}
				
		From the above three cases, we can always construct  suitable $U_i^+$ and $E_i^+$ such that
  		$\rho_i\ge \frac{5}{4}$. By the induction hypothesis together with Lemma~\ref{lemma3.7}, then   $e(V_1^{t}) \ge \frac{5}{4}|V_1^{t}| - \frac{7}{4}$ holds.
			
		Repeat the above procedure	for each vertex in $L^2$, we obtain the final set  $V^{\star}$, which contains all vertices of  $L_F^3$ and satisfies $e(V^{\star}) \ge \frac{5}{4}|V^{\star}| - \frac{7}{4}$. Hence, such vertex set $V^{\star}$ satisfies Properties 1, 2 and $3^+$. 

		The proof of Lemma \ref{lem_existence_of_Vstar} is finished.			
	
	\section{More on $\mathcal{C}_{[4,r]}$-saturated Graphs}
		In this paper, we prove that $\sat(n, \mathcal{C}_{[4,r]})\le \lceil\frac{5n}{4} - \frac{r+1}{4}\rceil$ for $n\ge r+1$ and $\sat(n, \mathcal{C}_{[4,6]})=\lceil\frac{5n}{4} - \frac{7}{4}\rceil$ for $n \ge 7$. It is worth noting that for $r$ sufficiently large, the bound in Theorem~\ref{thm_c4r} is not tight. For example, let $r \ge 11$ be odd and $n = \frac{rk - k + 4}{2}$ where  $k>2+\frac{48}{(r-1)(r-9)}$, we can construct an $n$-vertex $\mathcal{C}_{[4,r]}$‑saturated graph $\Theta_n$ obtained by adding $k$ edge-disjoint paths of length $\frac{r+1}{2}$ between two nonadjacent vertices $v$ and $u$. It is easy to check that $\Theta_n$ is $\mathcal{C}_{[4,r]}$‑saturated, but $e(\Theta_n) = \frac{(r+1)k}{2} < \frac{(r+1)n}{r-1} - \frac{2r-4}{r-1} < \frac{5n}{4} - \frac{r+1}{4}$.

	\appendix
	
	\section{Appendix}
	
	\begin{proof}[Proof of Lemma~\ref{lem_yi_in_L4}]
		Since $x_i \in L^3_F(x)$ and $y_i \in N_G(x_i)$, then $y_i \notin L^2$ by Proposition \ref{prop3}. If $y_i \in L^3(x^\prime)$ for one $x^\prime\in L^2$, then $x^\prime \ne x$ otherwise $x_i \in L^3_P(x)$; but now $x_i y_i x^\prime \alpha_1 x x_i$ forms a $C_5$ in $G$, a contradiction. Therefore, $y_i \in L^4$. Similarly, we have $y_{i+1} \in L^4$.
	\end{proof}

	\begin{proof}[Proof of Lemma~\ref{lem_yi_in_Q_implies_yi_in_L4p}]
	We prove only the case $y_i\in \mathcal{Q};$ the proof for $y_{i+1}$ is similar. 
		If $y_i\in \mathcal{Q},$
		since $x_i \in N_G(y_i)$, Fact 4 ensures that there is a vertex $y_i^\prime \in N_G(x_i) \cap N_G(y_i)$. By Proposition \ref{prop3} and $y_i\in L^4(x_i)$, we have $y_i^\prime \not\in L^3.$ 
		It follows that $y_i^\prime\in L^4(x_i)$ and
		 $y_i \in L^4_P(x_i)$. By Fact 2, such $y_i^\prime$ is unique.
	\end{proof}

	\begin{proof}[Proof of Lemma~\ref{lem_zi_in_L45}]
		If $z_i \in L^3$, then $\{x_i, z_i\}\subseteq N_G(y_i) \cap L^3$, which contradicts Proposition \ref{prop3}. Hence, $z_i \in L^4 \cup L^5$. Similarly, we have $z_{i+1} \in L^4 \cup L^5$.
	\end{proof}

	\begin{proof}[Proof of Lemma~\ref{lem_zi_in_Q_implies_zi_in_L4}]
			We prove only the case $z_i\in \mathcal{Q};$ the proof for $z_{i+1}$ is similar. 
 If $z_i\in \mathcal{Q}$, then $z_i\in L^4(x_i^\prime)$ for some $x_i^\prime\in L^3$ (as  $L^5\cap \mathcal{Q}=\emptyset$). Since $C_7(x_i x_{i+1})$ is an induced $C_7,$ then $z_i$ is not adjacent to  $x_i$ and $x_{i+1}$, which implies that $x_i^\prime\ne x_i, x_{i+1}.$ Hence, $x_i^\prime \in L^3\setminus \{x_i, x_{i+1}\}.$
	\end{proof}
	
	\begin{proof}[Proof of Lemma~\ref{lem_distinct}]
		We only consider that $x_i^\prime = x_{i+1}^\prime;$ the case of $x_i^\prime \ne x_{i+1}^\prime$ is similar.
		Clearly, $x_i, x_{i+1}, y_i, y_{i+1}, z_i, z_{i+1}$ are pairwise distinct.
		If any two vertices in the set  $\{x_i, x_{i+1}, y_i,$ $y_{i+1},z_i, z_{i+1}, x_i^\prime,$ $x_{i+1}^\prime, y_i^\prime, y_{i+1}^\prime\}$ coincide, then a straightforward check shows that there is a $C_4$, $C_5$, or $C_6$ in $G$, contradicting that $G$ is $\mathcal{C}_{[4,6]}$-saturated.
		 This completes the proof.
	\end{proof}
	
		\vskip 0.4 true cm
{\textbf{Declaration of competing interest}}
\vskip 0.4 true cm
The authors declare that they have no known competing financial interests or personal relationships that could have appeared to influence the work reported in this paper.

	\vskip 0.4 true cm
{\textbf{Acknowledgments}}
\vskip 0.4 true cm

This project  is supported by the National Natural Science Foundation of China (Nos. 12271484).

		\vskip 0.4 true cm
{\textbf{Data availability}}
\vskip 0.4 true cm
No data was used for the research described in the article.


\begin{thebibliography}{99}
	
		\bibitem{Che09}
		Y.C. Chen, Minimum $C_5$-saturated graphs, J. Graph Theory 61 (2) (2009) 111--126.
		
		\bibitem{Che11}
		Y.C. Chen, All minimum $C_5$-saturated graphs, J. Graph Theory 67 (1) (2011) 9--26.
		
		\bibitem{Clark83}
		L.H. Clark, R.C. Entringer, Smallest maximally nonhamiltonian graphs, Period. Math. Hung. 14 (1983) 57--68.
		
		\bibitem{Clark92}
		L.H. Clark, R.C. Entringer, H.D. Shapiro, Smallest maximally nonhamiltonian graphs II, Graphs Comb. 8 (3) (1992) 225--231.
		
			\bibitem{CF21}
		B. Currie, J. Faudree, R. Faudree, J. Schmitt, A survey of minimum saturated graphs, Electron. J. Comb. 18 (2021) \#DS19.
		
		
		\bibitem{EHM64}
		P. Erd\H{o}s, A. Hajnal, J.W. Moon, A problem in graph theory, Am. Math. Mon. 71 (1964) 1107--1110.
		
		\bibitem{Subdivision12}
		M. Ferrara, M. Jacobson, K.G. Milans, C. Tennenhouse, P.S. Wenger, Saturation numbers for families of graph subdivisions, J. Graph Theory 71 (4) (2012) 416--434.
		
		\bibitem{FFL97}
		D.C. Fisher, K. Fraughnaugh, L. Langley, $P_3$-connected graphs of minimum size, Ars Comb. 47 (1997) 299--306.
		
		\bibitem{FK13}
		Z. F\"{u}redi, Y. Kim, Cycle-saturated graphs with minimum number of edges, J. Graph Theory 73 (2) (2013) 203--215.
		
		\bibitem{Lan21}
		Y. Lan, Y. Shi, Y. Wang, J. Zhang, The saturation number of $C_6$, Discrete Math. 348 (8) (2025) 114504.
		
		\bibitem{LJZY97}
		X. Lin, W. Jiang, C. Zhang, Y. Yang, On smallest maximally non-Hamiltonian graphs, Ars Comb. 45 (1997) 263--270.
		
		\bibitem{MHHG21}
		Y. Ma, X. Hou, D. Hei, J. Gao, Minimizing the number of edges in $C_{\ge r}$-saturated graphs, Discrete Math. 344 (11) (2021) 112565.
		
		\bibitem{Ma25}
		Y. Ma, Minimum saturated graphs without 4-cycles and 5-cycles, Discrete Math. 348 (2025) 114690.
		
		\bibitem{Oll72}
		L.T. Ollmann, $K_{2,2}$ saturated graphs with minimal number of edges, in: Proceedings of Third Southeastern Conference on Combinatorics, Graph Theory, and Computing, Florida Atlantic Univ., Boca Raton, FlaBoca Raton, Fla, 1972, Florida Atlantic Univ., Boca Raton, Fla, 1972, pp. 367--392.
		
		\bibitem{Tuz89}
		Z. Tuza, $C_4$-saturated graphs of minimum size, in: 17th Winter School on Abstract Analysis, Srn\'{i}, 1989, Acta Univ. Carol., Math. Phys. 30 (2) (1989) 161--167.
		
		
	\end{thebibliography}
\end{document}